\documentclass[leqno,10pt,a4paper]{amsart}
\usepackage{amsthm}
\usepackage[all]{xy}
\usepackage[margin=2.7cm]{geometry}
\usepackage{graphicx}
\usepackage{hyperref}
\usepackage{hyperref}
\hypersetup{
    colorlinks,
    linkcolor={red!50!black},
    citecolor={blue!50!black},
    urlcolor={blue!80!black}
}
\usepackage[usenames,dvipsnames]{color}
\usepackage{mleftright}
\usepackage{mathabx}
\usepackage{longtable}
\usepackage{booktabs}
\usepackage{colortbl}
\usepackage{amssymb}
\usepackage{tikz}

\newtheorem{thm}{Theorem}[section]
\newtheorem{lemma}[thm]{Lemma}
\newtheorem{cor}[thm]{Corollary}
\newtheorem{prop}[thm]{Proposition}
\theoremstyle{definition}
\newtheorem{example}[thm]{Example}
\newtheorem{remark}[thm]{Remark}

\newtheorem{definition}[thm]{Definition}

\newtheorem*{conjectureA}{Conjecture A}
\numberwithin{equation}{section}
\newcommand{\orig}{\mathbf{0}}
\newcommand{\Z}{{\mathbb{Z}}}
\newcommand{\Q}{{\mathbb{Q}}}
\newcommand{\C}{{\mathbb{C}}}
\newcommand{\R}{{\mathbb{R}}}
\newcommand{\FF}{{\mathbb{F}}}

\newcommand{\NQ}{N_\Q}
\newcommand{\MQ}{M_\Q}
\newcommand{\GL}{GL}
\newcommand{\SL}{SL}
\newcommand{\Proj}{\mathbb{P}}
\newcommand{\Hom}[1]{\operatorname{Hom}\mleft({#1}\mright)}
\newcommand{\Vol}[1]{\operatorname{Vol}\mleft({#1}\mright)}
\newcommand{\abs}[1]{\left\vert{#1}\right\vert}
\newcommand{\intr}[1]{#1^\circ}
\newcommand{\bdry}[1]{\partial #1}
\newcommand{\V}[1]{\mathcal{V}\mleft({#1}\mright)}
\newcommand{\F}[1]{\mathcal{F}\mleft({#1}\mright)}
\renewcommand{\third}{\frac{1}{3}(1,1)}
\newcommand{\conv}[1]{\operatorname{conv}\mleft({#1}\mright)}
\newcommand{\sconv}[1]{\operatorname{conv}\mleft\{{#1}\mright\}}
\newcommand{\cone}[1]{\operatorname{cone}\mleft({#1}\mright)}
\newcommand{\scone}[1]{\operatorname{cone}\mleft\{{#1}\mright\}}
\newcommand{\dual}[1]{{{#1}^*}}

\newcommand{\Newt}[1]{\operatorname{Newt}\mleft({#1}\mright)}
\newcommand{\Ehr}[1]{\operatorname{Ehr}_{#1}\mleft(t\mright)}
\newcommand{\denom}[1]{\operatorname{denom}\mleft({#1}\mright)}
\newcommand{\lcm}[1]{\operatorname{lcm}\mleft\{{#1}\mright\}}
\renewcommand{\gcd}[1]{\operatorname{gcd}\mleft\{{#1}\mright\}}
\renewcommand{\min}[1]{\operatorname{min}\mleft\{{#1}\mright\}}
\newcommand{\bmin}[1]{\operatorname{min}\mleft({#1}\mright)}
\renewcommand{\max}[1]{\operatorname{max}\mleft\{{#1}\mright\}}
\newcommand{\bmax}[1]{\operatorname{max}\mleft({#1}\mright)}

\newcommand{\mP}{{m_P}}
\newcommand{\cB}{\mathcal{B}}
\newcommand{\cA}{\mathcal{A}}
\newcommand{\mB}{{m_\cB}}
\newcommand{\dB}{{d_\cB}}
\newcommand{\sB}{{s_\cB}}
\newcommand{\rE}{r_E}
\newcommand{\SC}[1]{\operatorname{SC}\mleft({#1}\mright)}
\newcommand{\res}[1]{\operatorname{res}\mleft({#1}\mright)}

\newcommand{\modb}[1]{\left(\operatorname{mod}\ {#1}\right)}
\newcommand{\mult}[1]{\operatorname{mult}\mleft({#1}\mright)}
\newcommand{\mut}{\operatorname{mut}}
\newcommand{\hmin}{h_{\operatorname{min}}}
\newcommand{\hmax}{h_{\operatorname{max}}}
\newcommand{\umin}{u_{\operatorname{min}}}
\newcommand{\const}[1]{\mathrm{coeff}_1({#1})}
\newcommand{\Hilb}[1]{\mathrm{Hilb}\mleft({#1}\mright)}
\newcommand{\Sing}[1]{\mathrm{Sing}\mleft({#1}\mright)}
\newcommand{\Tlattice}[1]{\Gamma_{#1}}
\newcommand{\codim}[1]{\mathrm{codim}\mleft({#1}\mright)}
\renewcommand{\dim}[1]{\mathrm{dim}\mleft({#1}\mright)}
\renewcommand{\emptyset}{\varnothing}
\renewcommand{\phi}{\varphi}
\newcommand{\proofsection}[1]{%
	\vspace{0.2em}%
	\noindent\underline{{#1}:}%
 \ }
\setlength{\LTcapwidth}{5.5in}
\newcolumntype{g}{>{\columncolor[gray]{0.95}\centering\arraybackslash}m{1.5em}}
\newcolumntype{w}{>{\centering\arraybackslash}m{1.5em}}
\newcommand{\gap}{\hspace{0.5em}}
\newcommand{\vgap}{\vspace{0.1em}}
\newcommand{\padding}{\rule[-1.45ex]{0pt}{0.2em}\gap}
\newcommand{\oddrow}{\rowcolor[gray]{0.95}}
\newcommand{\evnrow}{}
\newcommand{\xmapsto}[1]{\stackrel{#1}{\longmapsto}}
\newcommand{\createRpoly}[2]{%
	\expandafter\newcommand\csname Rpoly#1\endcsname{R_{#2}}%
}
\createRpoly{P}{1}
\createRpoly{L}{2}
\createRpoly{O}{3}
\createRpoly{M}{4}
\createRpoly{N}{5}
\createRpoly{E}{6}
\createRpoly{F}{7}
\createRpoly{G}{8}
\createRpoly{B}{9}
\createRpoly{H}{10}
\createRpoly{C}{11}
\createRpoly{A}{12}
\createRpoly{I}{13}
\createRpoly{D}{14}
\createRpoly{J}{15}
\createRpoly{K}{16}
\newcommand{\createTpoly}[2]{%
	\expandafter\newcommand\csname Tpoly#1\endcsname{T_{#2}}%
}
\createTpoly{H}{1}
\createTpoly{E}{2}
\createTpoly{A}{3}
\createTpoly{G}{4}
\createTpoly{D}{5}
\createTpoly{I}{6}
\createTpoly{B}{7}
\createTpoly{C}{8}
\createTpoly{F}{9}
\newcommand{\createPpoly}[2]{%
	\expandafter\newcommand\csname Ppoly#1\endcsname{P_{#2}}%
}
\createPpoly{J}{1}
\createPpoly{G}{2}
\createPpoly{H}{3}
\createPpoly{I}{4}
\createPpoly{B}{5}
\createPpoly{E}{6}
\createPpoly{F}{7}
\createPpoly{D}{8}
\createPpoly{A}{9}
\createPpoly{C}{10}
\graphicspath{{images/}}
\begin{document}
\author[Kasprzyk]{Alexander Kasprzyk}
\author[Nill]{Benjamin Nill}
\author[Prince]{Thomas Prince}
\address{Department of Mathematics\\Imperial College London\\180 Queen's Gate\\London, SW7 2AZ, UK}
\email{a.m.kasprzyk@imperial.ac.uk} 
\email{t.prince12@imperial.ac.uk} 
\address{Department of Mathematics\\Stockholm University\\SE-$106$\ $91$\ Stockholm\\Sweden}
\email{nill@math.su.se} 
\keywords{Del~Pezzo surface, lattice polygon, Fano polygon, mutation, cluster algebra}
\subjclass[2010]{14J45 (Primary); 14J10, 14M25, 52B20 (Secondary)}
\title{Minimality and mutation-equivalence of polygons}
\maketitle
\begin{abstract}
We introduce a concept of minimality for Fano polygons. We show that, up to mutation, there are only finitely many Fano polygons with given singularity content, and give an algorithm to determine the mutation-equivalence classes of such polygons. This is a key step in a program to classify orbifold del~Pezzo surfaces using mirror symmetry. As an application, we classify all Fano polygons such that the corresponding toric surface is qG-deformation-equivalent to either (i)~a smooth surface; or (ii)~a surface with only singularities of type $\third$.
\end{abstract}
\section{Introduction}\label{sec:intro}
\subsection{An introduction from the viewpoint of algebraic geometry and mirror symmetry}\label{sec:del_pezzo_intro}
A \emph{Fano polygon} $P$ is a convex polytope in $\NQ:=N\otimes_\Z\Q$, where $N$ is a rank-two lattice, with primitive vertices $\V{P}$ in $N$ such that the origin is contained in its strict interior, $\orig\in\intr{P}$. A Fano polygon defines a toric surface $X_P$ given by the \emph{spanning fan} of $P$; that is, $X_P$ is defined by the fan whose cones are spanned by the faces of $P$. The toric surface $X_P$ has cyclic quotient singularities (corresponding to the cones over the edges of $P$) and the anti-canonical divisor $-K_X$ is $\Q$-Cartier and ample. Hence $X_P$ is a toric \emph{del~Pezzo surface}.

The simplest example of a toric del~Pezzo surface is $\Proj^2$, corresponding, up to $\GL_2(\Z)$-equivalence, to the triangle $P=\sconv{(1,0),(0,1),(-1,-1)}$. It is well-known that there are exactly five smooth toric del~Pezzo surfaces, and that these are a subset of the sixteen toric Gorenstein del~Pezzo surfaces (in bijective correspondence with the famous sixteen reflexive polygons~\cite{Bat85,Rab89}). More generally, if one bounds the \emph{Gorenstein index} $r$ (the smallest positive integer such that $-rK_X$ is very ample) the number of possibilities is finite. Dais classified those toric del~Pezzo surfaces with Picard rank one and $r\leq 3$~\cite{Dai09}. A general classification algorithm was presented in~\cite{KKN08}.

A new viewpoint on del~Pezzo classification is suggested by mirror symmetry. We shall sketch this briefly; for details see \cite{ProcECM}. An $n$-dimensional Fano variety $X$ is expected to correspond, under mirror symmetry, to a Laurent polynomial $f\in\C[x_1^{\pm1},\ldots,x_n^{\pm1}]$~\cite{Auroux,Ba04,QC105,ProcECM}. Under this correspondence, the regularised quantum period $\widehat{G}_X$ of $X$ -- a generating function for Gromov--Witten invariants -- coincides with the classical period $\pi_f$ of $f$ -- a solution of the associated Picard--Fuchs differential equation -- given by
$$\pi_f(t)=\left(\frac{1}{2\pi i}\right)^n\int_{\abs{x_1}=\ldots=\abs{x_n}=1}\frac{1}{1-tf(x_1,\ldots,x_n)} \frac{dx_1}{x_1} \cdots \frac{dx_n}{x_n}=\sum_{k\geq 0}\const{f^k} t^k.$$
If a Fano variety $X$ is mirror to a Laurent polynomial $f$ then it is expected that $X$ admits a degeneration to the singular toric variety $X_P$ associated to the Newton polytope $P$ of $f$.

In general there will be many (often infinitely many) different Laurent polynomials mirror dual to $X$, and hence many toric degenerations $X_P$. It is conjectured that these Laurent polynomials are related via birational transformations analogous to cluster transformations, which are called \emph{mutations}~\cite{ACGK12,FZ,GU10,GHK13}. A mutation acts on the Newton polytope $P:=\Newt{f}\subset\NQ$ of a Laurent polynomial via ``rearrangement of Minkowski slices''~(see~\S\ref{subsec:mutation_N}), and on the dual polytope $\dual{P}\subset\MQ$, $M:=\Hom{N,\Z}$, via a piecewise-$\GL_n(\Z)$ transformation~(see~\S\ref{subsec:mutation_M})~\cite{ACGK12}. At the level of Laurent polynomials, if $f$ and $g$ are related via mutation then their classical periods agree~\cite[Lemma~2.8]{ACGK12}: $\pi_f=\pi_g$. Ilten~\cite{Ilt12} has shown that if two Fano polytopes $P$ and $Q$ are related by mutation then the corresponding toric varieties $X_P$ and $X_Q$ are deformation equivalent: there exists a flat family $\mathcal{X}\rightarrow\Proj^1$ such that $\mathcal{X}_0\cong X_P$ and $\mathcal{X}_\infty\cong X_Q$. In fact $X_P$ and $X_Q$ are related via a \emph{$\Q$-Gorenstein (qG) deformation}~\cite{Pragmatic}.

Classifying Fano polygons up to mutation-equivalence thus becomes a fundamental problem. One important mutation invariant is the \emph{singularity content}~(see~\S\ref{subsec:sing_content})~\cite{AK14}.  This consists of a pair $(n,\cB)$, where $n$ is an integer -- the number of \emph{primitive $T$-singularities} -- and $\cB$ is a \emph{basket} -- a collection of so-called \emph{residual singularities}. A residual singularity is a cyclic quotient singularity that is rigid under qG-deformations; at the other extreme a $T$-singularity is a cyclic quotient singularity that admits a qG-smoothing~\cite{KS-B88}. The toric del~Pezzo surface $X_P$ is qG-deformation-equivalent to a  del~Pezzo surface $X$ with singular points given by $\cB$ and Euler number of the non-singular locus $X\setminus\Sing{X}$ equal to $n$.

\begin{definition}[\cite{Pragmatic}]\label{defn:TG}
A del~Pezzo surface with cyclic quotient singularities that admits a qG-degeneration (with reduced fibres) to a normal toric del~Pezzo surface is said to be of class \emph{TG}.
\end{definition}

Notice that not all del~Pezzo surfaces can be of class TG: not every del~Pezzo has $h^0(X,-K_X)>0$, for example (see Example~\ref{eg:degree_hilb_third_one_one}). But it is natural to conjecture the following:

\begin{conjectureA}[\cite{Pragmatic}]\hypertarget{conj:A}{}
There exists a bijective correspondence between the set of mutation-equivalence classes of Fano polygons and the set of qG-deformation-equivalence classes of locally qG-rigid class TG del~Pezzo surfaces with cyclic quotient singularities.
\end{conjectureA}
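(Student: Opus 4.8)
The plan is to construct an explicit map from mutation-equivalence classes of Fano polygons to qG-deformation-equivalence classes of the relevant del~Pezzo surfaces, and then to verify that it is a bijection. Given a Fano polygon $P$, the spanning fan produces the toric del~Pezzo surface $X_P$, and via the singularity content $(n,\cB)$ there is a distinguished \emph{locally qG-rigid} del~Pezzo surface $X$ in the qG-deformation class of $X_P$: one qG-smooths each primitive $T$-singularity while leaving the residual singularities recorded by $\cB$ untouched, so that $X$ has only rigid cyclic quotient singularities. Since $X_P$ is recovered as a toric degeneration, $X$ is of class TG. I would therefore define a map $\Phi$ sending the mutation class of $P$ to the qG-deformation class of this $X$. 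Well-definedness on mutation classes is immediate from Ilten's theorem: if $P$ and $Q$ are mutation-equivalent then $X_P$ and $X_Q$ are qG-deformation-equivalent, hence determine the same locally qG-rigid representative.

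Surjectivity should follow almost directly from the definition of class TG. If $X$ is a locally qG-rigid class TG del~Pezzo surface with cyclic quotient singularities, then by Definition~\ref{defn:TG} it admits a qG-degeneration with reduced fibres to a normal toric del~Pezzo surface $X_0$, and one reads off the Fano polygon $P$ whose spanning fan defines $X_0$, giving $\Phi([P])=[X]$. The routine checks are that the central fibre genuinely has the form $X_P$ for a Fano polygon (primitivity of the vertices and $\orig\in\intr{P}$) and that qG-smoothing the $T$-singularities of $X_P$ returns precisely $X$ rather than some other member of the deformation class; here it is the local qG-rigidity of $X$ that pins down the representative uniquely.

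Injectivity is the crux, and is where I expect the genuine obstacle to lie. Unwinding the definitions, it amounts to the converse of Ilten's theorem: if the toric surfaces $X_P$ and $X_Q$ are qG-deformation-equivalent, then $P$ and $Q$ must be mutation-equivalent. A first reduction is that qG-deformation-equivalent surfaces share the same singularity content, and by the finiteness statement (recorded in the abstract) there are only finitely many mutation classes with a fixed singularity content; one is thus reduced to separating these finitely many classes. The natural separating invariant is the classical period: mutation-equivalent polygons satisfy $\pi_f=\pi_g$, and this period coincides with the regularised quantum period $\widehat{G}_{X_P}$, which is itself a qG-deformation invariant. Were the quantum period a \emph{complete} invariant of mutation classes the argument would close, but completeness is not known, and more seriously there is no general mechanism to factor an abstract qG-deformation between two toric surfaces into a sequence of combinatorial mutations. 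Decomposing the family $\mathcal{X}\to\Proj^1$ into elementary birational modifications and matching each to a mutation of the underlying polygon is exactly the missing step, and this gap is precisely why the statement is advanced here as a conjecture rather than proved as a theorem.
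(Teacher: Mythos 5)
The statement you have been asked to prove is Conjecture~A: the paper contains no proof of it, so there is no argument of the authors to compare yours against. What you have written is, correctly, a diagnosis of why it is a conjecture rather than a proof, and your diagnosis matches the role the statement plays in the paper. The genuine obstruction is exactly where you place it: injectivity amounts to a converse of Ilten's theorem, and there is no known mechanism for factoring an abstract qG-deformation between two toric del~Pezzo surfaces into a finite chain of combinatorial mutations of the defining polygons. The invariants developed in the paper (singularity content, the sublattice $\Tlattice{P}$, the quiver $Q_P$ and cluster algebra $\cA_P$) only separate mutation classes; they do not address this converse. Accordingly the paper proves the conjecture only in special cases, by classifying both sides independently and matching them: $\cB=\emptyset$ via Theorem~\ref{thm:T-sing_mutation_classes} against the ten families of smooth del~Pezzo surfaces, and $\cB=\{m\times\third\}$ via Theorem~\ref{thm:classes_third_one_one} against the Corti--Heuberger classification~\cite{CH}. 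Your proposed reduction of injectivity to completeness of the classical/quantum period is a reasonable heuristic but, as you note, completeness is itself open, so it does not close the argument.

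Two smaller caveats in the parts you label routine. First, well-definedness of $\Phi$ needs more than Ilten's flat family: one needs the deformation to be a \emph{qG}-deformation, which is the refinement quoted from~\cite{Pragmatic}, not~\cite{Ilt12} itself. Second, surjectivity is less formal than you suggest. Given a locally qG-rigid $X$ of class TG with toric degeneration to $X_0\cong X_P$, you still need (i) that the partial smoothing of $X_P$ prescribed by its singularity content exists as a \emph{global} qG-deformation -- the local qG-smoothings of the individual primitive $T$-singularities must glue without local-to-global obstruction, which is the content of the assertion the paper cites from~\cite{AK14,Pragmatic} -- and (ii) that any two locally qG-rigid members of a single qG-deformation class are identified in the target set, which depends on reading the equivalence as qG-deformation through arbitrary families rather than through locally trivial ones. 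Neither point is fatal, but both belong to the package of statements that~\cite{Pragmatic} is meant to supply rather than steps one can wave through; together with the injectivity gap they explain why the statement is advanced as a conjecture.
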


The main results of this paper can be seen as strong evidence in support of the conjecture above. First, an immediate consequence of Theorem~\ref{thm:T-sing_mutation_classes} is:

\begin{thm}
There are precisely ten mutation-equivalence classes of Fano polygons such that the toric del~Pezzo surface $X_P$ has only $T$-singularities. They are in bijective correspondence with the ten families of smooth del~Pezzo surfaces.
\end{thm}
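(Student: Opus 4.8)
The plan is to read the statement off from Theorem~\ref{thm:T-sing_mutation_classes} after recasting the hypothesis in terms of singularity content, and then to package the resulting mutation classes as the ten deformation families of smooth del~Pezzo surfaces. First I would note that $X_P$ has only $T$-singularities exactly when the basket of residual singularities is empty, so that the Fano polygons under consideration are precisely those whose singularity content has the shape $(n,\emptyset)$ (see~\S\ref{subsec:sing_content}). As singularity content is a mutation invariant, this class of polygons is closed under mutation, and Theorem~\ref{thm:T-sing_mutation_classes} applies verbatim to produce exactly ten mutation-equivalence classes. It remains to identify these ten classes with the ten families of smooth del~Pezzo surfaces.

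To do so, I would build the correspondence through qG-deformation. Because an empty basket means that every singularity of $X_P$ is a $T$-singularity and hence qG-smoothable, $X_P$ is qG-deformation-equivalent to a smooth del~Pezzo surface $X$, with $n$ equal to the topological Euler number $e(X)$ (the singular locus now being empty). By Ilten's theorem~\cite{Ilt12} mutation-equivalent polygons yield deformation-equivalent toric surfaces, so the assignment sending a mutation class to the deformation family of $X$ is well-defined on classes. Noether's formula for a rational surface gives $e(X)=12-K_X^2$, so the invariant $n$ determines the degree $d=K_X^2$ of $X$. For each degree $d\in\{1,\dots,9\}$ other than $d=8$ there is a single smooth del~Pezzo family, and it is pinned down by $n$ alone; the exceptional case $d=8$ comprises the two families $\Proj^1\times\Proj^1$ and $\mathrm{Bl}_1\Proj^2$, both with $n=4$.

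Finally I would establish bijectivity. Surjectivity is exactly the content of Theorem~\ref{thm:T-sing_mutation_classes}: its explicit representatives realise all ten families, the five smooth toric del~Pezzos directly and the remaining degrees $d\le 5$ through toric degenerations with empty basket; combined with the count of ten classes this forces the correspondence to be a bijection. The one genuinely delicate point---and the main obstacle---is injectivity at $n=4$, since the two degree-eight families share not only their singularity content $(4,\emptyset)$ but also their Euler number and their degree $K_X^2=8$, the latter itself a mutation invariant because mutation induces a qG-deformation; hence no coarse invariant can separate them. I would resolve this using the explicit mutation-graph computation underlying Theorem~\ref{thm:T-sing_mutation_classes}, which exhibits two distinct classes of content $(4,\emptyset)$---concretely, the reflexive square for $\Proj^1\times\Proj^1$ and the polygon of $\mathrm{Bl}_1\Proj^2$---thereby matching the two degree-eight mutation classes with the two degree-eight families and completing the bijection.
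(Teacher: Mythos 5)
Your proposal is correct and follows essentially the same route as the paper, which derives this statement as an immediate consequence of Theorem~\ref{thm:T-sing_mutation_classes} (with the $(4,\emptyset)$ case isolated in Corollary~\ref{cor:T-sing_mutation_classes_deg_8}); you simply make explicit the identification with the ten smooth families via qG-smoothing of $T$-singularities, the degree formula $(-K_{X_P})^2=12-n$ of Proposition~\ref{prop:sing_content_degree}, and the matching of the two degree-eight classes with $\Proj^1\times\Proj^1$ and $\FF_1$, all of which is implicit in the paper's tables and Example~\ref{eg:smooth_case}.
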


\noindent
Second, combining the results of~\cite{CH} with Theorem~\ref{thm:classes_third_one_one} we have:

\begin{thm}
There are precisely $26$ mutation-equivalence classes of Fano polygons with singularity content $(n,\{m\times\third\})$, $m\geq 1$. They are in bijective correspondence with the $26$ qG-deformation families of del~Pezzo surfaces with $m\times\third$ singular points that admit a toric degeneration.
\end{thm}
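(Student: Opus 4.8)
The plan is to deduce the stated bijection---a special case of Conjecture~A---by combining two enumerations of equal size: the $26$ mutation-equivalence classes of Fano polygons supplied by Theorem~\ref{thm:classes_third_one_one}, and the $26$ del~Pezzo families classified in~\cite{CH}. The key point is that a map between them can be built geometrically and then promoted to a bijection by a cardinality argument, so that the substantive content is isolated entirely in the two inputs.

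First I would construct the correspondence. To a Fano polygon $P$ with singularity content $(n,\{m\times\third\})$ one associates the toric del~Pezzo surface $X_P$; by the discussion preceding Definition~\ref{defn:TG}, $X_P$ is qG-deformation-equivalent to a del~Pezzo surface having exactly the singular points $m\times\third$ and smooth-locus Euler number $n$. By Ilten's theorem~\cite{Ilt12} a mutation of $P$ induces a qG-deformation of $X_P$, and since singularity content is a mutation invariant~\cite{AK14} the assignment $[P]\mapsto[X_P]$ descends to a well-defined map $\Phi$ from mutation-equivalence classes to qG-deformation families of del~Pezzo surfaces with $m\times\third$ singular points. As $X_P$ is toric, each such family manifestly admits a toric degeneration, so $\Phi$ takes values in the set classified by~\cite{CH}.

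Next I would check surjectivity. Any del~Pezzo surface $X$ in the target admits, by hypothesis, a qG-degeneration to a normal toric del~Pezzo surface $X_Q$ for some Fano polygon $Q$. Because residual singularities are rigid under qG-deformation the basket of $Q$ is again $\{m\times\third\}$, while the Euler number of the smooth locus fixes the number $n$ of primitive $T$-singularities; hence $Q$ has singularity content $(n,\{m\times\third\})$ and $\Phi([Q])=[X]$. With Theorem~\ref{thm:classes_third_one_one} giving exactly $26$ classes in the domain and~\cite{CH} giving exactly $26$ families in the target, a surjection between finite sets of equal cardinality is forced to be a bijection, which completes the argument.

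The main obstacle is therefore not this formal matching but the two classifications it fuses, together with the need to certify that they index the same objects. On the polygon side, Theorem~\ref{thm:classes_third_one_one} rests on executing the finiteness-and-enumeration algorithm underlying Theorem~\ref{thm:T-sing_mutation_classes} across all admissible pairs $(n,m)$ and proving that the representatives produced are pairwise mutation-inequivalent. On the surface side, one must confirm that~\cite{CH} lists precisely those del~Pezzo surfaces with $m\times\third$ singularities admitting a toric degeneration, and that the numerical data recorded there---chiefly the degree $K_X^2$, which is determined by $(n,m)$---agree with the invariants attached to the polygon classes, so that the two counts of $26$ genuinely refer to the same geometry.
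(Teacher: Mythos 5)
Your proposal matches the paper's own treatment: the paper derives this statement simply by ``combining the results of~\cite{CH} with Theorem~\ref{thm:classes_third_one_one}'', i.e.\ exactly the two inputs of $26$ you identify, with the bijection obtained by matching the polygon classification against the Corti--Heuberger classification. Your additional care in constructing the map $[P]\mapsto[X_P]$ via Ilten's theorem and mutation-invariance of singularity content, and in promoting surjectivity to bijectivity by cardinality, only makes explicit what the paper leaves implicit, so the approach is essentially identical and correct.
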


\noindent
In Theorem~\ref{thm:general_minimals} we prove that the number of Fano polygons with basket $\cB$ is finite, and give an algorithm for their classification.  If one accepts Conjecture~\hyperlink{conj:A}{A} then this tells us that, for fixed basket $\cB$, the number of qG-deformation-equivalence classes of del~Pezzo surfaces of type TG with singular points $\cB$ is finite, and gives an algorithm for classifying their toric degenerations.

\subsection{An introduction from the viewpoint of cluster algebras and cluster varieties}\label{sec:cluster_algebra_intro}
One can obtain information about mutations of polygons using quivers and the theory of cluster algebras~\cite{FZ,FG09}. There is a precise analogy between mutation classes of Fano polygons and the clusters of certain cluster algebras, as we now describe. Let $L\cong\Z^n$, and fix a skew-symmetric form $\{\cdot,\cdot\}$ on $L$. A \emph{cluster} $C$ is a transcendence basis for $\C(L)$, and a \emph{seed} is a pair $(B,C)$ where $B$ is a basis of $L$. There is a notion of \emph{mutation} of seeds, given in Definition~\ref{def:seed_mutation} below; this depends on the form $\{\cdot,\cdot\}$. A \emph{cluster algebra} is the algebra generated by all clusters that can be obtained from a given initial seed by mutation. To a seed $(B,C)$ one can associate a quiver $Q_B$ with vertex set equal to $B$ and the number of arrows from $e_i \in B$ to $e_j \in B$ equal to $\bmax{\{e_i,e_j\},0}$. Changing the seed $(B,C)$ by a mutation changes the quiver $Q_B$ by a \emph{quiver mutation} (see Definition~\ref{def:quiver_mutation}). Conversely, from a quiver with vertex set $B$ and no vertex-loops or two-cycles, one can construct a cluster algebra by setting $L=\Z^B$, defining $\{e_i,e_j\}$ to be the (signed) number of arrows from $e_i \in B$ to $e_j\in B$, and taking the initial seed to be $(B,C)$, where $C$ is the standard transcendence basis for $\C(L)$.

We can also associate a quiver and a cluster algebra to a Fano polygon $P$ as follows. Suppose that the singularity content of $P$ is $(n,\cB)$. The associated quiver $Q_P$ has $n$ vertices; each vertex $v$ corresponds to a \emph{primitive $T$-singularity}, and hence determines an edge $E$ of $P$ (these edges need not be distinct). The number of arrows from vertex $v$ to vertex $v'$ is defined to be $\max{w \wedge w',0}$, where $\wedge$ denotes the determinant, and $w$ and $w'$ are the primitive inner normal vectors to the edges $E$ and $E'$ of $P$. The cluster algebra $\cA_P$ associated to $P$ is the cluster algebra associated to $Q_P$; we denote the initial seed of this cluster algebra by $(B_P,C_P)$.

We show in Proposition~\ref{prop:seed_mutations} below that a mutation from a seed $(B,C)$ to a seed $(B',C')$ induces a mutation between the corresponding Fano polygons $P$ and $P'$.  We then show, in  Proposition~\ref{prop:quiver_mutations}, that a mutation from a Fano polygon $P$ to a Fano polygon $P'$ induces a mutation between the corresponding quivers $Q_P$ and $Q_{P'}$.  These correspondences have consequences for mutation-equivalence which are not readily apparent from the polygon alone. 

\begin{example} \label{example:pentagon}
Consider a Fano polygon $P \subset \NQ$ containing only two primitive $T$-singularities, and suppose that the corresponding inner normal vectors form a basis for the dual lattice~$M$. Then there are at most five polygons, up to $\GL_2(\Z)$-equivalence, that are mutation-equivalent to $P$. This follows from the facts that the quiver associated to $P$ has underlying graph $A_2$, and that the exchange graph of the $A_2$ cluster algebra is pentagonal; see Corollary~\ref{cor:pentagon} below.
\end{example}

\subsection{An introduction from the viewpoint of the geometry of numbers}\label{sec:geom_numbers_intro}
The relation between the lattice points in a convex body and its geometric shape and volume is a key problem in convex geometry and integer optimisation. These connections have been addressed specifically for lattice polytopes, independently of their significance in toric geometry. Here we focus only on the case of interest in this paper, that of a Fano polygon. A classical result in this area is the following (these statements can be generalised and quantified, see~\cite{HS09}):

\begin{thm}[\cite{LZ91,Sco76}]
There are only finitely many $\GL_2(\Z)$-equivalence classes of Fano polygons $P$ with $I$ interior lattice points, for each $I\in\Z_{>0}$.
\end{thm}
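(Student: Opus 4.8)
The plan is to reduce the statement to two more tractable facts: first, that the Euclidean area of $P$ is bounded above by a function of $I$ alone; and second, that there are only finitely many $\GL_2(\Z)$-equivalence classes of lattice polygons of bounded area. Since $P$ is a Fano polygon the origin lies in its strict interior, so $I\geq 1$ and in particular the interior is nonempty throughout.

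For the first step I would invoke Pick's theorem, which for a lattice polygon reads $\operatorname{Area}(P)=I+B/2-1$, where $B$ denotes the number of boundary lattice points. Thus bounding the area is equivalent to bounding $B$ in terms of $I$, and this is exactly the content of Scott's inequality $B\leq 2I+7$ (valid whenever $I\geq 1$), which combined with Pick gives $\operatorname{Area}(P)\leq 2I+5/2$. The genuine geometry-of-numbers input is Scott's inequality itself. I would prove it by a projection/slicing argument: choose a lattice direction realising the lattice width of $P$, cut $P$ by the lattice lines orthogonal to it, and count boundary points slice by slice; the presence of interior lattice points in the central slices then forces an upper bound on how many boundary points the extreme slices can contribute.

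The second step is the assertion that bounded area implies finitely many classes. Here I would first bound the lattice width $w(P)$ in terms of the area: an application of Minkowski's convex body theorem yields an inequality of the form $w(P)^2\leq c\,\operatorname{Area}(P)$ for an absolute constant $c$, so $w(P)$ is bounded. After a $\GL_2(\Z)$-transformation we may therefore assume $P\subseteq\{0\leq y\leq W\}$ for a bounded integer $W$. The horizontal cross-section width $g(y)$ is then a nonnegative concave function with $\int_0^W g=\operatorname{Area}(P)$, so by concavity its maximum is at most $2\operatorname{Area}(P)/W$, which bounds each cross-section. A further shear $x\mapsto x+ky$, which preserves the strip, then places $P$ inside a bounded box, and only finitely many lattice polygons sit inside a fixed bounded box.

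The main obstacle is Scott's inequality, equivalently the area bound: controlling the number of boundary lattice points by the number of interior ones is the essential step, and it is where the arithmetic of the lattice genuinely enters. By contrast, once the area is bounded the passage to finiteness is comparatively routine, modulo the mildly delicate choice of shear needed to convert the bounded-width strip into a bounded box.
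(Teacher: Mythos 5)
The paper offers no proof of this statement: it is quoted as a classical result, with the two citations \cite{Sco76} and \cite{LZ91} standing in for exactly the two steps you describe, namely Scott's inequality $B\leq 2I+7$ (combined with Pick's theorem to bound the area) and the Lagarias--Ziegler argument that a lattice polygon of bounded area containing an interior lattice point can be placed, after a $\GL_2(\Z)$-transformation, inside a bounded box. So your proposal is correct and is essentially the intended proof. The only step stated a little too glibly is the width--area inequality $w(P)^2\leq c\cdot\operatorname{Area}(P)$: Minkowski's theorem does not apply to $P$ directly, but to a suitable centrally symmetric auxiliary body (e.g.\ the difference body or its polar), after which the inequality follows with an explicit constant; this is standard and true, so it is a presentational rather than a mathematical gap. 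Your final shear step also goes through as you indicate, using that the left endpoint function of the horizontal slices is convex and the right endpoint function is concave, so that normalising the slope of one chord by an integer shear traps the whole polygon in a box of size controlled by $W$ and $2\operatorname{Area}(P)/W$.
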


\begin{cor}
There are only a finite number of possibilities for the area and number of lattice points of a Fano polygon with $I$ interior lattice points, for each $I\in\Z_{>0}$.
\end{cor}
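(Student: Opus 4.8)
The plan is to observe that the two quantities in the statement---the area and the number of lattice points---are both invariants of the $\GL_2(\Z)$-action, so that finiteness of the number of equivalence classes immediately bounds the number of values each quantity can assume. The entire content is thus already supplied by the preceding Theorem; the corollary is a formal consequence once the invariance is checked.

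First I would record the two invariance statements. Every $g\in\GL_2(\Z)$ satisfies $\abs{\det g}=1$, so it acts on $\NQ$ as an area-preserving linear isomorphism; hence $\Vol{gP}=\Vol{P}$ for any Fano polygon $P$. Moreover $g$ restricts to a bijection of the lattice $N$, and therefore carries $P\cap N$ bijectively onto $gP\cap N$, giving $\abs{P\cap N}=\abs{gP\cap N}$. Consequently both the area and the total lattice-point count depend only on the $\GL_2(\Z)$-equivalence class of $P$. (The same observation shows that the number of interior lattice points is a well-defined class invariant, which is what makes the hypothesis ``$I$ interior lattice points'' meaningful in the first place.)

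With this in hand I would invoke the preceding Theorem: for each fixed $I\in\Z_{>0}$ there are only finitely many $\GL_2(\Z)$-equivalence classes of Fano polygons with $I$ interior lattice points. Selecting one representative from each class yields a finite list, and by the invariance just established the area and lattice-point count of an arbitrary Fano polygon with $I$ interior points coincide with those of its representative. Hence each quantity takes only finitely many values, as claimed. There is no genuine obstacle; the only step requiring a moment's care is to fix the normalisation of area---Euclidean area, or equivalently the lattice-normalised volume $\Vol{\cdot}$---and to note that this normalisation is $\GL_2(\Z)$-invariant precisely because $\abs{\det g}=1$.
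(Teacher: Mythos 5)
Your argument is correct and is exactly the intended one: the paper states this corollary without proof as an immediate consequence of the preceding theorem, relying on precisely the observation you make, namely that area and lattice-point count are $\GL_2(\Z)$-invariants (since $\abs{\det g}=1$ and $g$ preserves $N$), so finitely many equivalence classes yield finitely many values.
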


In~\cite{ACGK12} a new equivalence relation on Fano polytopes was introduced, called \emph{mutation-equivalence}, that is weaker than $\GL_2(\Z)$-equivalence. In particular there exist infinitely many mutation-equivalent Fano polytopes that are not $\GL_2(\Z)$-equivalent (see, for example,~\cite[Example~3.14]{AK13}) and so their area and number of lattice points cannot be bounded. Mutation-equivalence does, however, preserve the Ehrhart series (and hence volume) of the dual polytope $\dual{P}\subset\MQ$ (see~\S\ref{subsec:mutation_M})~\cite[Proposition~4]{ACGK12}.

\begin{remark}\label{rem:arbitrary_large_volume}
The reader should be aware that there is no direct relation between the volume of $P$ and that of $\dual{P}$. Whilst the product $\Vol{P}\cdot\Vol{\dual{P}}$ of the (normalised) volume of $P$ and of its dual polygon $\dual{P}$ cannot be arbitrarily small \cite{Mah39}, both can simultaneously become arbitrarily large. For example, $P_k:=\sconv{(k,1),(k,-1),(-1,0)}$ for $k \in \Z_{>0}$.
\end{remark}

In the two-dimensional case, given a Fano polygon $P$ there exists an explicit formula for the Ehrhart series and volume of the dual polygon $\dual{P}$ given in terms of the \emph{singularity content} of $P$ (see~\S\ref{subsec:sing_content})~\cite{AK14}. A consequence of this formula is that mutation-equivalent Fano polygons cannot have an arbitrarily large number of vertices~\cite[Lemma~3.8]{AK14}.

Recall that the \emph{height} $\rE\in\Z_{\geq 0}$ of a lattice line segment $E\subset\NQ$ is the lattice distance of $E$ from the origin, and the \emph{width} is given by the positive integer $k=\abs{E\cap N}-1$. Clearly there exist unique non-negative integers $n$ and $k_0$, $0\leq k_0<\rE$, such that $k=n\rE+k_0$. Suppose that $E$ is the edge of a Fano polygon, so that the vertices of $E$ are primitive. As described in~\cite{AK14} (see also~\S\ref{subsec:sing_content}), one can decompose $E$ into $n+1$ (or $n$ if $k_0=0$) lattice line segments with primitive vertices. Of these, $n$ line segments have their width equal to their height; the cones over these line segments correspond to \emph{primitive $T$-singularities}. If $k_0\ne 0$ then there is one additional lattice line segment of width $k_0<\rE$; the cone over this line segment corresponds to a \emph{residual singularity}.

Although there may be several different decompositions of this form for an edge $E$, it turns out that the residual singularity is unique -- it does not depend on the choice of decomposition. In addition, the collection of residual singularities arising from all of the edges of $P$, which we call the \emph{basket} of $P$ and denote by $\cB$, is a mutation invariant. We say that a lattice point of $P$ is \emph{residual} if it lies in the strict interior of a residual cone, for some fixed choice of decomposition. The number of residual lattice points does not depend on the chosen decomposition, but only on the basket $\cB$ of $P$, and is invariant under mutation. The main results of this paper can now be stated in a way analogous to the classical results above:

\begin{thm}
There are only finitely many mutation-equivalence classes of Fano polygons $P$ with $N$ residual lattice points, for each $N\in\Z_{\ge 0}$.
\end{thm}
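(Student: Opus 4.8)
The plan is to deduce the statement from the fixed-basket finiteness result, Theorem~\ref{thm:general_minimals}, by showing that the integer $N$ restricts the basket $\cB$ to a finite list. Recall that the number of residual lattice points is a mutation invariant depending only on the basket; write $N(\cB)$ for it. The residual cones attached to distinct residual singularities occupy distinct angular sectors of $\NQ$ and their interiors are pairwise disjoint, so $N(\cB)$ is the sum, over the residual singularities $s$ of $\cB$, of the number $\nu(s)$ of residual lattice points contributed by $s$. It therefore suffices to show (i) that each $\nu(s)\ge 1$, and (ii) that $\nu(s)\to\infty$ as the order of $s$ grows; for then a basket with $N(\cB)=N$ has at most $N$ residual singularities, each of bounded order, and there are only finitely many such baskets.

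To establish (i) and (ii) I would work with the residual cone explicitly. A residual singularity $s$ is the cone over a lattice segment $E_s$ of width $k_0$ and height $h$ with $0<k_0<h$ and primitive endpoints $v_0,v_1$; the triangle $\conv{\orig,v_0,v_1}$ then has normalised area $hk_0$. Its boundary lattice points number $k_0+2$ (the primitivity of $v_0$ and $v_1$ means the two edges through $\orig$ carry no interior lattice points, while $E_s$ carries $k_0-1$), so Pick's formula gives exactly $\frac{1}{2}k_0(h-1)$ interior lattice points. All of these lie in the strict interior of the residual cone and in $P$, whence $\nu(s)\ge\frac{1}{2}k_0(h-1)$. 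Since primitivity of both endpoints is impossible for the configuration $k_0=1$, $h=2$, every residual singularity has $h\ge 3$, giving $\nu(s)\ge 1$ for (i); and as $k_0<h$ the order $hk_0$ is unbounded only if $h\to\infty$, in which case $\frac{1}{2}k_0(h-1)\to\infty$, giving (ii).

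With the basket now confined to finitely many possibilities, I would conclude by applying Theorem~\ref{thm:general_minimals} to each admissible $\cB$: there are only finitely many mutation-equivalence classes of Fano polygons with that basket. The union over the finitely many baskets is finite, which is the assertion. As a sanity check the argument specialises correctly at $N=0$: then no residual singularity can occur, so $\cB=\emptyset$, the surface $X_P$ has only $T$-singularities, and one recovers exactly the ten mutation-equivalence classes of the first theorem of the introduction.

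I expect the only real work to be in the two quantitative claims (i) and (ii) about $\nu(s)$, the heavier finiteness input being supplied by the assumed Theorem~\ref{thm:general_minimals}. Of these, the lower bound (i) is the subtler, since the count $\frac{1}{2}k_0(h-1)$ would vanish for a width-one, height-two segment; it is precisely the arithmetic constraint that both endpoints of $E_s$ be primitive that excludes this degenerate case and pins the minimal residual singularity down to $\third$, with $\nu=1$.
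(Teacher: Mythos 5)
Your proof is correct and follows essentially the same route as the paper: both arguments apply Pick's formula to a lattice triangle spanned by the origin and (part of) a residual edge to turn the bound $N$ into a bound on the residual data, and then conclude by invoking Theorem~\ref{thm:general_minimals}. The only difference is that the paper bounds just the maximal local index $\mB$ via the width-one sub-triangle $\sconv{\orig,v_1,v_1+(v_2-v_1)/k}$, which is all that theorem requires, whereas you do the slightly more elaborate (but valid) work of confining the whole basket to a finite list; your arithmetic exclusion of the configuration $k_0=1$, $h=2$ is exactly what is needed to secure the lower bound $\nu(s)\ge 1$.
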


\begin{proof}
If there are no residual cones then the result follows from Theorem~\ref{thm:T-sing_minimals}. In order to use Theorem~\ref{thm:general_minimals} we only have to show that the height $\rE$ of an edge $E$ containing a residual cone is bounded. Let $v_1$ and $v_2$ be primitive points on $E$ such that $\scone{v_1,v_2}$ is a residual cone. The line segment joining $v_1$ and $v_2$ has width $1\le k<\rE$. We see that the lattice triangle
$$\sconv{\orig,v_1,v_1+(v_2-v_1)/k}$$
has at most $N+3$ lattice points. Pick's formula~\cite{Pick} implies that its area and thus the height $\rE$ is bounded in terms of $N$.
\end{proof}

\begin{cor}
There are only a finite number of possibilities for the dual area and the number of vertices of a Fano polygon with $N$ residual lattice points, for each $N\in\Z_{\ge 0}$.
\end{cor}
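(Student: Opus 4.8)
The plan is to read this off from the preceding theorem together with two facts already recorded above, so that essentially no new geometric work is needed. By that theorem, for each fixed $N$ there are only finitely many mutation-equivalence classes of Fano polygons with $N$ residual lattice points; it therefore suffices to show that each of the two quantities takes only finitely many values across these finitely many classes.

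First I would treat the dual area. The normalised volume $\Vol{\dual{P}}$ of the dual polygon is a mutation invariant \cite[Proposition~4]{ACGK12}, being extracted from the Ehrhart series of $\dual{P}$, which mutation preserves. Hence $\Vol{\dual{P}}$ is constant on each mutation-equivalence class, and finiteness of the number of classes immediately yields finitely many possible values. (Alternatively, one could argue that $\Vol{\dual{P}}$ is determined by the singularity content of $P$ via the explicit formula of \cite{AK14}, and that the singularity content, being a mutation invariant, takes only finitely many values across the classes.)

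The number of vertices needs marginally more care, and this is the one point I would flag: it is \emph{not} a mutation invariant, since a single mutation may change how many vertices a polygon has, so the constancy argument used for the dual area does not apply here. Instead I would invoke \cite[Lemma~3.8]{AK14}, which asserts that the number of vertices cannot become arbitrarily large within a single mutation-equivalence class; consequently the vertex count attains only finitely many values on each class. Combined with the finiteness of the number of classes, this bounds the number of vertices globally in terms of $N$ and completes the argument. The conceptual subtlety — and the step I would state most carefully — is precisely this asymmetry: the dual area is controlled by invariance, whereas the vertex count must instead be controlled by boundedness, because it genuinely varies within a single class.
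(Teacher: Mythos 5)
Your argument is correct and is essentially the route the paper intends: the corollary is meant to follow from the preceding finiteness theorem together with the mutation-invariance of $\Vol{\dual{P}}$ (via \cite[Proposition~4]{ACGK12}) and the bound on the number of vertices within a mutation class from \cite[Lemma~3.8]{AK14} (equivalently, the bound $\abs{\V{P}}\leq n+\abs{\cB}$ in terms of the mutation-invariant singularity content). Your explicit flagging of the asymmetry between invariance of the dual area and mere boundedness of the vertex count is accurate and matches how the paper sets up these facts just before stating the corollary.
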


\noindent
In particular, this shows that there exist no Fano polygons with empty basket but an arbitrarily large number of vertices. Note that it can be easily seen that there exist centrally-symmetric Fano polygons with an arbitrarily large number of vertices where every edge corresponds to a residual singularity.

\section{Mutation of Fano polygons}\label{sec:mutation}
In~\cite[\S3]{ACGK12} the concept of mutation for a lattice polytope was introduced. We state it here in the simplified case of a Fano polygon $P\subset\NQ$ and refer to~\cite{ACGK12} for the general definitions.

\subsection{Mutation in $N$}\label{subsec:mutation_N}
Let $w\in M:=\Hom{N,\Z}$ be a primitive inner normal vector for an edge $E$ of $P$, so $w:N\rightarrow\Z$ induces a grading on $\NQ$ and $w(v)=-\rE$ for all $v\in E$, where $\rE$ is the height of $E$. Define
$$\hmax:=\max{w(v)\mid v\in P}\quad\text{ and }\quad\hmin:=-\rE=\min{w(v)\mid v\in P}.$$
We have that $\hmax>0$ and $\hmin<0$. For each $h\in\Z$ we define $w_h(P)$ to be the (possibly empty) convex hull of those lattice points in $P$ at height $h$,
$$w_h(P):=\sconv{v\in P\cap N\mid w(v)=h}.$$
By definition $w_{\hmin}(P)=E$ and $w_{\hmax}(P)$ is either a vertex or an edge of $P$. Let $v_E\in N$ be a primitive lattice element of $N$ such that $w(v_E)=0$, and define $F:=\sconv{\orig,v_E}$, a line segment of unit width parallel to $E$ at height $0$. Notice that $v_E$, and hence $F$, is uniquely defined only up to sign.

\begin{definition}\label{defn:mutation}
Suppose that for each negative height $\hmin\leq h<0$ there exists a (possibly empty) lattice polytope $G_h\subset\NQ$ satisfying
\begin{equation}\label{eq:slice}
\{v\in\V{P}\mid w(v)=h\}\subseteq G_h+\abs{h}F\subseteq w_h(P).
\end{equation}
where `$+$' denotes the Minkowski sum, and we define $\emptyset+Q=\emptyset$ for any polygon $Q$.  We call $F$ a \emph{factor} of $P$ with respect to $w$, and define the \emph{mutation} given by the primitive normal vector $w$, factor $F$, and polytopes $\{G_h\}$ to be:
$$\mut_w(P,F):=\conv{\bigcup_{h=\hmin}^{-1}G_h\cup\bigcup_{h=0}^{\hmax}\left(w_h(P)+hF\right)}\subset\NQ.$$
\end{definition}

Although not immediately obvious from the definition, the resulting mutation is independent of the choices of $\{G_h\}$~\cite[Proposition~1]{ACGK12}. Furthermore, up to isomorphism, mutation does not depend on the choice of $v_E$: we have that $\mut_w(P,F)\cong\mut_w(P,-F)$. Since we consider a polygon to be defined only up to $\GL_2(\Z)$-equivalence, mutation is well-defined and unique. Any mutation can be inverted by inverting the sign of $w$: if $Q:=\mut_w(P,F)$ then $P=\mut_{-w}(Q,F)$~\cite[Lemma~2]{ACGK12}. Finally, we note that $P$ is a Fano polygon if and only if the mutation $Q$ is a Fano polygon~\cite[Proposition~2]{ACGK12}.

We call two polygons $P$ and $Q\subset\NQ$ \emph{mutation-equivalent} if there exists a finite sequence of mutations between the two polygons (considered up to $\GL_2(\Z)$-equivalence). That is, if there exist polygons $P_0, P_1,\ldots, P_n$ with $P\cong P_0$, $P_{i+1}=\mut_{w_i}(P_i,F_i)$, and $Q\cong P_n$, for some $n\in\Z_{\geq 0}$.

\begin{remark}\label{rem:two_dim_special}
We remark briefly upon the three ways in which our definition above differs slightly from that in~\cite{ACGK12}.
\begin{enumerate}
\item\label{item:two_dim_special_origin}
First,~\cite{ACGK12} does not require that the factor $F$ be based at the origin. The condition that $\orig\in\V{F}$ is harmless, and indeed we have touched on this above when we noted that $F$ and $-F$ give $\GL_2(\Z)$-equivalent mutations: in general translation of a factor $F$ by a lattice point $v\in w^\perp$, where $w^\perp:=\{v\in\NQ\mid w(v)=0\}$, results in isomorphic mutations. It is reasonable to regard a factor as being defined only up to translation by elements in $w^\perp\cap N$, with the resulting mutation defined only up to ``shear transformations'' fixing the points in $w^\perp$.
\item\label{item:two_dim_special_dim_1}
Second, the more general definition places no restriction on the dimension of the factor $F$, although the requirement that $F\subset w^\perp$ does mean that $\codim{F}\geq 1$. In particular it is possible to take $F=v$, where $w(v)=0$. But observe that $v=\orig+v$, and $\mut_w(\,\cdot\,,\orig)$ is the identity, hence $\mut_w(P,v)$ is trivial. Thus our insistence that $\dim{F}=1$ is reasonable.
\item\label{item:two_dim_special_primitive}
Finally, our requirement that $F$ is of unit width is a natural simplification: in general, if the factor can be written as a Minkowski sum $F=F_1+F_2$, where we can insist that each $F_i\subset w^\perp$ and $\dim{F_i}>0$, then the mutation with factor $F$ can be written as the composition of two mutations with factors $F_1$ and $F_2$ (with fixed $w$). Thus it is reasonable to assume that the factor is Minkowski-indecomposable and hence, for us, a primitive line segment.
\end{enumerate}
\end{remark}

In two dimensions, mutations are completely determined by the edges of $P$:

\begin{lemma}\label{lem:mutation_iff_length}
Let $E$ be an edge of $P$ with primitive inner normal vector $w\in M$. Then $P$ admits a mutation with respect to $w$ if and only if $\abs{E\cap N}-1\geq\rE$.
\end{lemma}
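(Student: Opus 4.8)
\section*{Proof proposal}

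The plan is to read off the factor condition slice by slice and show that the only binding constraint occurs at the edge $E$ itself. Since $\dim{F}=1$ and $w^\perp\cap N$ has rank one in two dimensions, the primitive factor $F=\sconv{\orig,v_E}$ is determined up to sign, so $P$ admits a mutation with respect to $w$ precisely when $F$ is a factor, i.e.\ when lattice polytopes $G_h$ satisfying \eqref{eq:slice} exist for every height $\hmin\le h<0$. Choosing coordinates so that $w(x,y)=y$ and $v_E=(1,0)$, each slice $w_h(P)$ is a horizontal lattice segment, $\abs{h}F$ is a horizontal segment of width $\abs{h}$, and any admissible $G_h$ is a (possibly empty) horizontal lattice segment at height $h$ (it must sit at height $h$ because $\abs{h}F$ lies at height $0$). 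The first observation is that at a height carrying no vertex of $P$ one may simply take $G_h=\emptyset$, so only the heights with $\{v\in\V{P}\mid w(v)=h\}\ne\emptyset$ impose conditions.

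For the forward (``only if'') direction I would examine the bottom slice $h=\hmin=-\rE$, where $w_{-\rE}(P)=E$ and the vertices at that height are exactly the two endpoints of $E$. Any admissible $G_{-\rE}$ then satisfies $G_{-\rE}+\rE F\subseteq E$ while containing both endpoints of $E$; being a horizontal segment it must equal $E$, and in particular it is nonempty. Comparing widths of parallel segments gives $\operatorname{width}(G_{-\rE})+\rE=\abs{E\cap N}-1$, and since the width of a lattice segment in the direction $v_E$ is a nonnegative integer, this forces $\abs{E\cap N}-1\ge\rE$.

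For the converse I assume $\abs{E\cap N}-1\ge\rE$ and construct the $G_h$. The key geometric input is that the width function $W(h)$ of the real slice $P\cap\{y=h\}$ is concave: writing $P$ as the region between its convex left boundary $L$ and concave right boundary $R$, we have $W=R-L$. Since $W(-\rE)=\abs{E\cap N}-1\ge\rE$ and $W(0)>0$ (the origin is interior), and since a concave function dominating an affine function at both endpoints of an interval dominates it throughout, comparison of $W$ with $h\mapsto\abs{h}$ on $[-\rE,0]$ yields $W(h)\ge\abs{h}$ there. At a height $h$ carrying a vertex $v$ of $P$, that vertex is an endpoint of the slice and a lattice point, so one boundary value is an integer; because $\abs{h}\in\Z$, this upgrades the real inequality $W(h)\ge\abs{h}$ to the lattice statement that $w_h(P)$ has lattice width at least $\abs{h}$. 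Anchoring $G_h$ at the vertex --- a single lattice point when $v$ is the unique vertex at that height, a segment reaching the opposite endpoint when vertices occur on both sides --- then produces an admissible $G_h$, while at $h=-\rE$ one takes the width-$(\abs{E\cap N}-1-\rE)$ segment with $G_{-\rE}+\rE F=E$.

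The step I expect to be the main obstacle is exactly this passage from the real width to the lattice width: the concavity argument only controls $W(h)=R(h)-L(h)$, whereas the $G_h$ are required to be lattice polytopes, so a priori $w_h(P)$ could be strictly narrower than the real slice. Resolving it rests on the twin facts that the heights actually imposing conditions are precisely the vertex heights, at which one endpoint of the slice is forced to be a lattice point, and that $\abs{h}$ is an integer, so that rounding the opposite endpoint inward via a floor or ceiling preserves the inequality.
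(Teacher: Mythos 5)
Your proof is correct and follows essentially the same route as the paper's: necessity is read off from the bottom slice $h=\hmin$, and sufficiency is established by showing that every slice carrying a vertex of $P$ has lattice width at least $\abs{h}$, taking $G_h=\emptyset$ at all other heights. The only cosmetic difference lies in the real-to-lattice step: the paper slices the cone over $E$ (real width $\abs{h}(\abs{E\cap N}-1)/\rE\geq\abs{h}$, hence lattice width at least $\abs{h}-1$) and adjoins the vertex as one extra lattice point, whereas you invoke concavity of the width function of $P$ and use the integrality of the vertex endpoint to round the opposite endpoint inward --- both come down to the same convexity estimate.
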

\begin{proof}
Let $k:=\abs{E\cap N}-1$ be the width of $E$. At height $h=\hmin=-\rE$, condition~\eqref{eq:slice} becomes $E=G_{\hmin}+\rE F$. Hence this condition can be satisfied if and only if $k\geq\rE$. Suppose that $k\geq\rE$ and consider the cone $C:=\cone{E}$ generated by $E$. At height $\hmin<h<0$, $h\in\Z$, the line segment $C_h:=\{v\in C\mid w(v)=h\}\subset\NQ$ (with rational end-points) has width $\abs{h}k/\rE\geq\abs{h}$. Hence $w_h(C)\subset w_h(P)$ has width at least $\abs{h}-1$. Suppose that there exists some $v\in\V{P}$ such that $w(v)=h$. Since $v\not\in w_h(C)$ we conclude that $w_h(P)$ has width at least $\abs{h}$. Hence condition~\eqref{eq:slice} can be satisfied. If $\{v\in\V{P}\mid w(v)=h\}=\emptyset$ then we can simply take $G_h=\emptyset$ to satisfy condition~\eqref{eq:slice}.
\end{proof}

\subsection{Mutation in $M$}\label{subsec:mutation_M}
Given a Fano polygon $P\subset\NQ$ we define the \emph{dual polygon}
$$\dual{P}:=\{u\in\MQ\mid u(v)\geq -1\text{ for all }v\in P\}\subset\MQ.$$
In general this has rational-valued vertices and necessarily contains the origin in its strict interior. Define $\varphi:\MQ\rightarrow\MQ$ by $u\mapsto u-\umin w$, where $\umin:=\min{u(v)\mid v\in F}$. Since $F=\sconv{\orig,v_E}$, this is equivalent to
$$
\varphi(u)=
\begin{cases}
u,&\text{ if }u(v_E)\geq 0;\\
u - u(v_E)w,&\text{ if }u(v_E)< 0.
\end{cases}
$$
This is a piecewise-$\GL_2(\Z)$ map, partitioning $\MQ$ into two half-spaces whose common boundary is generated by $w$. Crucially~\cite[Proposition~4]{ACGK12}:
$$\varphi(\dual{P})=\dual{Q},\qquad\text{ where }Q:=\mut_w(P,F).$$
An immediate consequence of this is that the volume and Ehrhart series of the dual polygons are preserved under mutation: $\Vol{\dual{P}}=\Vol{\dual{Q}}$ and $\Ehr{\dual{P}}=\Ehr{\dual{Q}}$. Equivalently, mutation preserves the anti-canonical degree and Hilbert series of the corresponding toric varieties: $(-K_{X_P})^2=(-K_{X_Q})^2$ and $\Hilb{X_P,-K_{X_P}}=\Hilb{X_Q,-K_{X_Q}}$.

\begin{example}\label{eg:mutations_of_P2}
Consider the polygon $P_{(1,1,1)}:=\sconv{(1,1),(0,1),(-1,-2)}\subset\NQ$. The toric variety corresponding to $P_{(1,1,1)}$ is $\Proj^2$. Let $w=(0,-1)\in M$, so that $\hmin=-1$ and $\hmax=2$, and set $F=\sconv{\orig,(1,0)}\subset\NQ$. Then $F$ is a factor of $P_{(1,1,1)}$ with respect to $w$, giving the mutation $P_{(1,1,2)}:=\mut_w(P_{(1,1,1)},F)$ with vertices $(0,1)$, $(-1,-2)$, $(1,-2)$ as depicted below. The toric variety corresponding to $P_{(1,1,2)}$ is $\Proj(1,1,4)$.

\begin{center}
\begin{tabular}{r@{ }c@{ }c@{ }c@{ }r}
\raisebox{25px}{$\NQ:$}&
\includegraphics[scale=0.6]{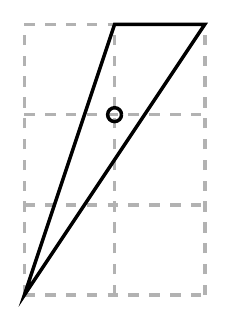}& 
\raisebox{25px}{$\longmapsto$}&
\includegraphics[scale=0.6]{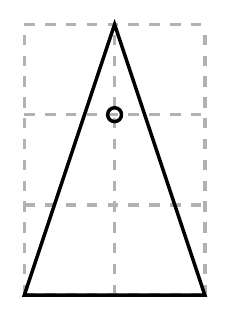}& 
\phantom{$\NQ:$}
\end{tabular}
\end{center}

\noindent
In $\MQ$ we see the mutation as a piecewise-$\GL_2(\Z)$ transformation. This acts on the left-hand half-space $\{(u_1,u_2)\in\MQ\mid u_1<0\}$ via the transformation
$$(u_1,u_2)\mapsto(u_1,u_2)\small\begin{pmatrix}
1&-1\\
0&1
\end{pmatrix}$$
and on the right-hand half-space via the identity.

\begin{center}
\begin{tabular}{r@{ }c@{ }c@{ }c@{ }r}
\raisebox{25px}{$\MQ:$}&
\includegraphics[scale=0.6]{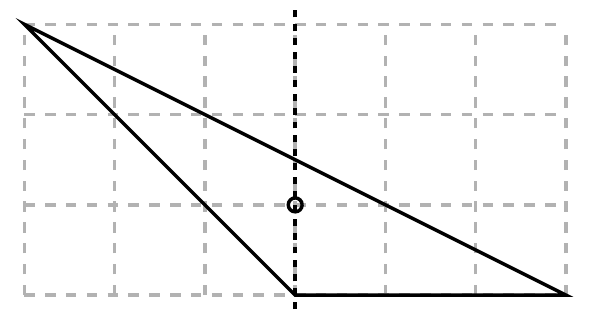}& 
\raisebox{25px}{$\longmapsto$}&
\includegraphics[scale=0.6]{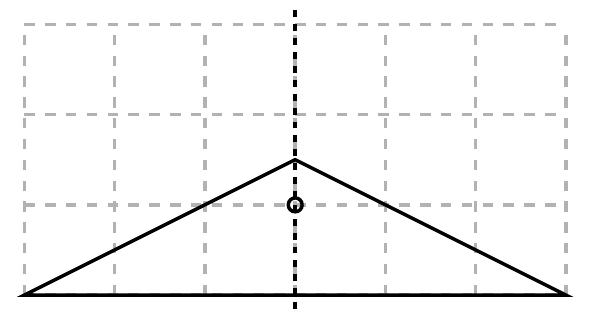}& 
\phantom{$\MQ:$}
\end{tabular}
\end{center}

\noindent
We can draw a graph of all possible mutations obtainable from $P_{(1,1,1)}$: the vertices of the graph denote $\GL_2(\Z)$-equivalence classes of Fano polygons, and two vertices are connected by an edge if there exists a mutation between the two Fano polygons (notice that, since mutations are invertible, we can regard the edges as being undirected). We obtain a tree whose typical vertex is trivalent~\cite[Example~3.14]{AK13}:
\begin{center}
\small
\begin{tikzpicture}[grow=down,level distance=1cm]
\tikzstyle{level 3}=[sibling distance=6cm]
\tikzstyle{level 4}=[sibling distance=3cm]
\tikzstyle{level 5}=[sibling distance=1.5cm,level distance=0.7cm]
\node {$(1,1,1)$}
child {node {$(1,1,2)$}
child {node {$(1,2,5)$}
child {node {$(2,5,29)$}
child {node {$(5,29,433)$}
child {node {} edge from parent[dotted]}
child {node {} edge from parent[dotted]}}
child {node {$(2,29,169)$}
child {node {} edge from parent[dotted]}
child {node {} edge from parent[dotted]}}}
child {node {$(1,5,13)$}
child {node {$(5,13,194)$}
child {node {} edge from parent[dotted]}
child {node {} edge from parent[dotted]}}
child {node {$(1,13,34)$}
child {node {} edge from parent[dotted]}
child {node {} edge from parent[dotted]}}}}
};
\end{tikzpicture}
\end{center}
\noindent
Here the vertices have been labelled with weights $(a,b,c)$, where the polygon $P_{(a,b,c)}$ corresponds to the toric variety $\Proj(a^2,b^2,c^2)$. The triples $(a,b,c)$ are solutions to the Markov equation
$$3xyz=x^2+y^2+z^2,$$
and each mutation corresponds, up to permutation of $a$, $b$, and $c$, to a transformation of the form
$(a,b,c)\mapsto (3bc-a,b,c)$. In the theory of Markov equations these transformations are also called \emph{mutations}. A solution $(a,b,c)$ is called \emph{minimal} if $a+b+c$ is minimal, and every solution can be reached via mutation from a minimal solution. Minimal solutions correspond to those triangles with $\Vol{P_{(a,b,c)}}$ minimal. In this example $(1,1,1)$ is the unique minimal solution. These statements can be generalised to any mutation between triangles~\cite{AK13}. Hacking--Prokhorov~\cite{HP10} use these minimal solutions in their classification of rank-one qG-smoothable del~Pezzo surfaces of class TG.
\end{example}

\section{Invariants of Fano polygons}\label{sec:invariants}
We wish to be able to establish whether or not two Fano polygons are mutation-equivalent. In this section we introduce two mutation invariants of a Fano polygon $P\subset\NQ$: singularity content, discussed in~\S\ref{subsec:sing_content} below, can be thought of as studying the part of $P$ that remains untouched by mutation (the basket $\cB$ of residual singularities); the cluster algebra $\cA_P$, discussed in~\S\ref{subsec:quivers_and_clusters} below, studies the part of $P$ that changes under mutation (the primitive $T$-singularities). Although we have no proof, it seems likely that together these two invariants completely characterise the mutation-equivalence classes. Finally, in~\S\ref{subsec:affine_manifolds} we briefly mention the connection with affine manifolds and the Gross--Seibert program~\cite{GS-PROGRAM}.

\subsection{Singularity content}\label{subsec:sing_content}
In~\cite{AK14} the concept of \emph{singularity content} for a Fano polygon was introduced. First we state the definition for a cyclic quotient singularity $\frac{1}{R}(a,b)$, where $\gcd{R,a}=\gcd{R,b}=1$. (Recall that $\frac{1}{R}(a,b)$ denotes the germ of a quotient singularity $\C^2/\mu_R$, where $\varepsilon\in\mu_R$ acts via $(x,y)\mapsto(\varepsilon^ax,\varepsilon^by)$.) Let $k,r,c\in\Z$ be non-negative integers such that $k=\gcd{R,a+b}$, $R=kr$, and $a+b=kc$. Then $r$ is equal to the \emph{Gorenstein index} of the singularity, and $k$ is called the \emph{width}. Thus $\frac{1}{R}(a,b)$ can be written in the form $\frac{1}{kr}(1,kc-1)$ for some $c\in\Z$ with $\gcd{r,c}=1$.

\begin{definition}[$k\divides r$]\label{defn:T_singularity}
A cyclic quotient singularity such that $k=nr$ for some $n\in\Z_{>0}$, that is, a cyclic quotient singularity of the form $\frac{1}{nr^2}(1,nrc-1)$, is called a \emph{$T$-singularity} or a singularity of \emph{class~$T$}. When $n=1$, so that the singularity is of the form $\frac{1}{r^2}(1,rc - 1)$, we call it a \emph{primitive $T$-singularity}.
\end{definition}

\begin{definition}[$k<r$]\label{defn:R_singularity}
A cyclic quotient singularity of the form $\frac{1}{kr}(1,kc-1)$ with $k<r$ is called a \emph{residual singularity} or a singularity of \emph{class~$R$}.
\end{definition}

$T$-singularities appear in the work of Wahl~\cite{Wah80} and Koll\'ar--Shepherd-Barron~\cite{KS-B88}. A singularity is of class $T$ if and only if it admits a qG-smoothing. At the opposite extreme, a singularity is of class $R$ if and only if it is rigid under qG-deformation. More generally, consider the cyclic quotient singularity $\sigma=\frac{1}{kr}(1,kc-1)$. Let $0\leq k_0<r$ and $n$ be the unique non-negative integers such that $k=nr+k_0$. Then either $k_0=0$ and $\sigma$ is qG-smoothable, or $k_0>0$ and $\sigma$ admits a qG-deformation to the residual singularity $\frac{1}{k_0r}(1,k_0c-1)$~\cite{AK14,Pragmatic}. This motivates the following definition:

\begin{definition}[\protect{\cite[Definition~2.4]{AK14}}]\label{defn:residue_sing_content_cone}
With notation as above, let $\sigma=\frac{1}{kr}(1,kc-1)$ be a cyclic quotient singularity. The \emph{residue} of $\sigma$ is given by
$$
\res{\sigma}:=
\begin{cases}
\emptyset&\text{ if }k_0=0,\\
\frac{1}{k_0r}(1,k_0c-1)&\text{ otherwise}.
\end{cases}
$$
The \emph{singularity content} of $\sigma$ is given by the pair $\SC{\sigma}:=\left(n,\res{\sigma}\right)$.
\end{definition}

\begin{example}
Let $\sigma=\frac{1}{nr^2}(1,nrc-1)$ be a $T$-singularity. Then $\SC{\sigma}=(n,\emptyset)$.
\end{example}

Singularity content has a natural description in terms of the cone $C$ defining the singularity. We call a two-dimensional cone $C\subset\NQ$ a \emph{$T$-cone} (respectively \emph{primitive $T$-cone}) if the corresponding cyclic quotient singularity is a $T$-singularity (respectively primitive $T$-singularity), and we call $C$ an \emph{$R$-cone} if the corresponding singularity is a residual singularity. Let $C=\scone{\rho_0,\rho_1}\subset\NQ$ be a two-dimensional cone with rays generated by the primitive lattice points $\rho_0$ and $\rho_1$ in $N$. The line segment $E=\sconv{\rho_0,\rho_1}$ is at height $r$ and has width $\abs{E\cap N}-1=k$. Write $k=nr+k_0$. Then there exists a partial crepant subdivision of $C$ into $n$ cones $C_1,\ldots,C_n$ of width $r$  and, if $k_0\neq 0$, one cone $C_0$ of width $k_0<r$. Although not immediately obvious from this description, the singularity corresponding to the $R$-cone $C_0$ is well-defined and equal to $\frac{1}{k_0r}(1,k_0c-1)$. The singularities corresponding to the $n$ primitive $T$-cones $C_1,\ldots,C_n$ depend upon the particular choice of subdivision: see~\cite[Proposition~2.3]{AK14} for the precise statement.

\begin{definition}\label{defn:sing_content_polygon}
Let $P\subset\NQ$ be a Fano polygon with edges $E_1,\ldots,E_m$, numbered cyclically, and let $\sigma_1,\ldots,\sigma_m$ be the corresponding two-dimensional cyclic quotient singularities $\sigma_i=\cone{E_i}$, with $\SC{\sigma_i}=\left(n_i,\res{\sigma_i}\right)$. The \emph{singularity content} of $P$, denoted by $\SC{P}$, is the pair $(n,\cB)$, where $n:=\sum_{i=1}^mn_i$ and $\cB$ is the cyclically-ordered list $\left\{\res{\sigma_i}\mid 1\leq i\leq m, \res{\sigma_i}\neq\emptyset\right\}$. We call $\cB$ the \emph{basket} of residual singularities of $P$.
\end{definition}

\noindent
Singularity content is a mutation invariant of $P$~\cite[Proposition~3.6]{AK14}. Intuitively one can see this from Lemma~\ref{lem:mutation_iff_length}: mutation removes a line segment of length $\abs{\hmin}$ from the edge at height $\hmin$, changing the corresponding singularity content by $(n,\res{\sigma})\mapsto(n-1,\res{\sigma})$; mutation adds a line segment of length $\hmax$ at height $\hmax$, changing the singularity content by $(n',\res{\sigma'})\mapsto(n'+1,\res{\sigma'})$. Put another way, mutation removes a primitive $T$-cone at height $\hmin$ and adds a primitive $T$-cone at height $\hmax$, leaving the residual cones unchanged. We can rephrase Lemma~\ref{lem:mutation_iff_length} in terms of singularity content:

\begin{lemma}\label{lem:mutation_iff_T-cone}
Let $E$ be an edge of $P$ with primitive inner normal vector $w\in M$, and let $\left(n,\res{\sigma}\right)$ be the singularity content of $\sigma=\cone{E}$. Then $P$ admits a mutation with respect to $w$ if and only if $n\ne 0$.
\end{lemma}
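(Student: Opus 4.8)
The plan is to reduce the statement to Lemma~\ref{lem:mutation_iff_length}, which already characterises the existence of a mutation purely in terms of the width and height of the edge $E$. The content of Lemma~\ref{lem:mutation_iff_T-cone} is to rephrase the numerical condition $\abs{E\cap N}-1\geq\rE$ from that lemma in terms of the singularity content datum $n$. So the essential task is to show that, with $k:=\abs{E\cap N}-1$ the width of $E$ and $\rE$ its height, we have the equivalence $k\geq\rE$ if and only if $n\ne 0$.

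**First** I would recall the defining relation for the singularity content of the cone $\sigma=\cone{E}$. As set up in~\S\ref{subsec:sing_content}, the line segment $E$ sits at height $\rE$ and has width $k$, and we write $k=n\rE+k_0$ with $0\leq k_0<\rE$ the unique such non-negative integers (here the Gorenstein index $r$ of $\sigma$ equals $\rE$, since the height of the edge is exactly the Gorenstein index of the corresponding cone). By Definition~\ref{defn:residue_sing_content_cone} and the cone description that follows it, $\SC{\sigma}=(n,\res{\sigma})$ with precisely this $n$ counting the primitive $T$-cones $C_1,\ldots,C_n$ in a partial crepant subdivision of $C$.

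**Then** the equivalence is immediate from the division algorithm. Since $0\leq k_0<\rE$, the integer $n=\lfloor k/\rE\rfloor$ is non-zero exactly when $k\geq\rE$: indeed $n\ne 0$ forces $k=n\rE+k_0\geq\rE$, and conversely if $k\geq\rE$ then $k/\rE\geq 1$ so $n=\lfloor k/\rE\rfloor\geq 1$. Combining this with Lemma~\ref{lem:mutation_iff_length}, which states that $P$ admits a mutation with respect to $w$ if and only if $k\geq\rE$, yields that $P$ admits such a mutation if and only if $n\ne 0$, as required.

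**I do not expect any genuine obstacle here:** the statement is purely a translation of the numerical criterion of Lemma~\ref{lem:mutation_iff_length} into the language of singularity content, and the only thing to be careful about is matching conventions—specifically that the height $\rE$ of the edge coincides with the Gorenstein index $r$ used in the definition of $\SC{\sigma}$, and that $n$ in the singularity content is exactly the quotient $\lfloor k/\rE\rfloor$ appearing implicitly in Lemma~\ref{lem:mutation_iff_length}. Once these identifications are made explicit, the proof is a one-line appeal to the division algorithm together with the previously established lemma.
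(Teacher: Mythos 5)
Your proposal is correct and is exactly the argument the paper intends: the paper presents this lemma as a direct rephrasing of Lemma~\ref{lem:mutation_iff_length}, with the only content being the observation that $n\neq 0$ in the decomposition $k=n\rE+k_0$, $0\leq k_0<\rE$, is equivalent to $k\geq\rE$. Your explicit verification that the height $\rE$ agrees with the Gorenstein index $r$ used in the definition of singularity content is the right point to check, and the rest follows from the division algorithm as you say.
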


Singularity content provides an upper-bound on the maximum number of vertices of any polygon $P$ with $\SC{P}=(n,\cB)$, or, equivalently, an upper-bound on the Picard rank $\rho$ of the corresponding toric variety $X_P$~\cite[Lemma~3.8]{AK14}:
\begin{align}\label{eq:bound_on_rank}
\abs{\V{P}}\leq n+\abs{\cB}, &&\rho\leq n+\abs{\cB}-2.
\end{align}

\begin{example}\label{eg:P113}
Consider $P_{(1,1)}=\sconv{(1,0),(0,1),(-1,-3)}$ with corresponding toric variety $\Proj(1,1,3)$. This has singularity content $(2,\{\third\})$, hence, by~\eqref{eq:bound_on_rank}, any polygon $Q$ mutation-equivalent to $P_{(1,1)}$ has three vertices. Mutations between triangles were characterised in~\cite{AK13}: the mutation graph is given by
\begin{center}
\small
\begin{tikzpicture}[grow=right,level distance=2cm]
\node {$(1,1)$}
child {node {$(1,4)$}
child {node {$(4,19)$}
child {node {$(19,91)$}
child {node {$(91,436)$}
child {node {$\cdots$}}}}}
};
\end{tikzpicture}
\end{center}
Here the vertices have been labelled by pairs $(a,b)\in\Z_{>0}^2$, and correspond to $\Proj(a^2,b^2,3)$ and its associated triangle. These pairs are solutions to the Diophantine equation $5xy=x^2+y^2+3$. Up to exchanging $a$ and $b$, a mutation of triangles corresponds to the mutation $(a,b)\mapsto (5b-a,b)$ of solutions. There is a unique minimal solution given by $(1,1)$.
\end{example}

As noted in~\S\ref{sec:del_pezzo_intro}, the toric variety $X_P$ is qG-deformation-equivalent to a del~Pezzo surface $X$ with singular points $\cB$ and topological Euler number of $X\setminus\Sing{X}$ equal to $n$~\cite{AK14,Pragmatic}. The degree and Hilbert series can be expressed purely in terms of singularity content. Recall that information about a minimal resolution of a singularity $\sigma=\frac{1}{R}(1,a-1)$ is encoded in the Hirzebruch--Jung continued fraction expansion $[b_1,\ldots,b_s]$ of $R/(a-1)$; see, for example,~\cite{Ful93}. For each $i\in\{1,\ldots,s\}$ we inductively define the positive integers $\alpha_i,\beta_i$ as follows:
\begin{align*}
\alpha_1=\beta_s&=1,\\
\alpha_i/\alpha_{i-1}&:=[b_{i-1},\ldots,b_1],\quad 2\leq i\leq s,\\
\beta_i/\beta_{i+1}&:=[b_{i+1},\ldots,b_s],\quad 1\leq i\leq s-1.
\end{align*}
The values $-b_i$ give the self-intersection numbers of the exceptional divisors of the minimal resolution of $\sigma$, and the values $d_i:=-1+(\alpha_i+\beta_i)/R$ give the discrepancies. The degree contribution of $\sigma$ is given by:
$$A_\sigma:=s+1-\sum_{i=1}^sd_i^2b_i+2\sum_{i=1}^{s-1}d_id_{i+1}.$$
The Riemann--Roch contribution $Q_\sigma$ of $\sigma$ can be computed in terms of Dedekind sums (see~\cite[\S8]{Reid85}):
$$
Q_\sigma=\frac{1}{1-t^R}\sum_{i=1}^{R-1}\left(\delta_{ai}-\delta_0\right)t^{i-1},
\qquad\text{ where }\qquad
\delta_j:=\frac{1}{R}\sum_{\stackrel{\scriptstyle \varepsilon\in\mu_R}{\!\varepsilon\neq 1}}\frac{\varepsilon^j}{(1-\varepsilon)(1-\varepsilon^{a-1})}.
$$
\begin{prop}[\protect{\cite[Proposition~3.3 and Corollary~3.5]{AK14}}]\label{prop:sing_content_degree}
Let $P\subset\NQ$ be a Fano polygon with singularity content $(n,\cB)$. Let $X_P$ be the toric variety given by the spanning fan of $P$. Then
$$
(-K_{X_P})^2=12-n-\sum_{\sigma\in\cB}A_\sigma
\quad\text{ and }\quad
\Hilb{X_P,-K_{X_P}}=\frac{1+\left((-K_{X_P})^2-2\right)t+t^2}{(1-t)^3}+\sum_{\sigma\in\cB}Q_\sigma.
$$
\end{prop}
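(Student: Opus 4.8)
The plan is to prove the two formulae separately, relying on the additivity of degree and Riemann--Roch contributions over the cones of the spanning fan, together with the reduction of each cone to its singularity content.

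First I would establish the degree formula. The key observation is that the anti-canonical self-intersection $(-K_{X_P})^2$ of a toric surface decomposes as a sum of local contributions, one for each two-dimensional cone (equivalently, each edge $E_i$) of the spanning fan. For a \emph{smooth} toric del~Pezzo surface one has the classical identity $(-K_X)^2 = 12 - (\text{Euler number correction})$, which for the projective plane gives $9$ and in general reflects Noether's formula $K^2 + c_2 = 12\chi(\mathcal{O}_X)$ with $\chi(\mathcal{O}_X)=1$. The strategy is to show that each cone $\sigma_i = \cone{E_i}$ contributes $-A_{\sigma_i}$ relative to the contribution of a smooth cone, and that the $n = \sum_i n_i$ primitive $T$-cones together with the basket cones account for the full correction. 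Concretely, I would use the crepant subdivision of each $\sigma_i$ into $n_i$ primitive $T$-cones and (if present) one residual cone $\res{\sigma_i}$: since the subdivision is crepant it does not change $(-K_{X_P})^2$, so the degree is computed on the subdivided fan. The primitive $T$-cones each contribute exactly $-1$ to the degree (as a smoothable Gorenstein-index-$r^2$ singularity contributes the same as the number of blow-ups needed, consistent with the $12-n$ term when the basket is empty), while each residual cone $\sigma \in \cB$ contributes $-A_\sigma$, where $A_\sigma$ is the expression built from the Hirzebruch--Jung data. The identity $A_\sigma = s+1-\sum d_i^2 b_i + 2\sum d_i d_{i+1}$ itself arises from computing $(-K)^2$ on the minimal resolution via the intersection matrix of the exceptional chain, using that the discrepancies $d_i$ record the difference between the pullback of $-K_{X_P}$ and $-K$ of the resolution.

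Second, for the Hilbert series, I would use that $\Hilb{X_P,-K_{X_P}}$ counts $h^0(X_P, -mK_{X_P})$, and that this equals the Ehrhart series of $\dual{P}$. By orbifold Riemann--Roch (Reid's formula, \cite{Reid85}), the Hilbert series splits as a ``smooth'' part determined only by the degree $(-K_{X_P})^2$ and the genus, plus a sum of local contributions $Q_\sigma$ from the singular points. The smooth part is exactly $\frac{1+((-K_{X_P})^2-2)t+t^2}{(1-t)^3}$, which is the Hilbert series one expects for a del~Pezzo surface of the given degree with at worst Du~Val-comparable behaviour. Since $T$-singularities admit qG-smoothings and the Hilbert series is a qG-deformation invariant (being determined by $-K^2$ and the local data), the $T$-cones contribute nothing beyond the smooth part, so only the basket cones contribute the Dedekind-sum terms $Q_\sigma$. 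The Dedekind-sum expression for $Q_\sigma$ is precisely the periodic correction term in Reid's Riemann--Roch for a cyclic quotient singularity $\frac{1}{R}(1,a-1)$.

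The main obstacle I anticipate is making precise the claim that primitive $T$-cones contribute $-1$ to the degree and $0$ to the singular part of the Hilbert series --- in other words, that one may freely replace each $T$-cone by a smooth cone without altering either invariant. The cleanest justification is to invoke Ilten's theorem (quoted in the introduction) that mutation induces qG-deformation equivalence, together with the fact that $T$-singularities are exactly those admitting qG-smoothings \cite{KS-B88}; since both $(-K)^2$ and the Hilbert series are qG-deformation invariants, qG-smoothing the $T$-singularities reduces the computation to a surface whose only singularities are the basket $\cB$, where the formulae follow directly from the local Riemann--Roch and degree contributions. Verifying that this smoothing can be performed torically-compatibly (or that the invariants genuinely descend to the singularity content rather than the finer cone data) is the delicate point; the remaining steps are routine continued-fraction and Dedekind-sum bookkeeping.
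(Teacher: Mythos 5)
First, a remark on the comparison itself: the paper does not prove this proposition --- it is quoted verbatim from \cite{AK14} (Proposition~3.3 and Corollary~3.5 there), so the only ``proof'' in the paper is the citation. Your sketch does correctly reconstruct the architecture of the cited argument: Noether's formula on the minimal resolution $\pi\colon Y\to X_P$ for the degree, with $A_\sigma$ arising from expanding $(\pi^*K_{X_P})^2=\bigl(K_Y-\sum d_iE_i\bigr)^2$ against the exceptional chain, and Reid's orbifold Riemann--Roch \cite{Reid85} for the Hilbert series, with the crux in both cases being that a primitive $T$-cone contributes exactly $1$ to $12-(-K_{X_P})^2$ and exactly $0$ to the periodic correction.

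That crux, however, is precisely what you do not prove, and it is the entire non-trivial content of the statement. For the degree one needs the identity $A_\sigma=n_\sigma+A_{\res{\sigma}}$ for an arbitrary cone $\sigma$ of singularity content $(n_\sigma,\res{\sigma})$ --- equivalently, $A_\sigma=1$ for a primitive $T$-cone together with additivity of $A$ over the crepant subdivision, which is not automatic since $A_\sigma$ is defined through the minimal resolution of a single cone; for the Hilbert series one needs the Dedekind-sum computation showing $Q_\sigma=0$ when $\sigma$ is of class $T$. Both are finite but genuinely non-trivial continued-fraction and Dedekind-sum calculations, and they are exactly what \cite{AK14} supplies. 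Your proposed workaround via qG-deformation invariance does not close the gap: Ilten's theorem \cite{Ilt12} concerns mutations of polytopes, not smoothings of $T$-singularities, so it is the wrong tool to invoke; producing a global qG-deformation that smooths precisely the $T$-points requires vanishing of the local-to-global obstruction (the content of \cite{Pragmatic,HP10}, not of \cite{KS-B88} alone); and for the degree formula the assertion that the resulting surface has smooth-locus Euler number $n$ is, via Noether's formula, essentially equivalent to the identity being proved, so that route is circular there. A correct write-up must either carry out the two local computations directly, as \cite{AK14} does, or carefully substitute the deformation-theoretic input from \cite{Pragmatic} without assuming the Euler-number statement.
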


The terms $Q_\sigma$ can be interpreted as a periodic correction to the initial term
$$\frac{1+\left((-K_{X_P})^2-2\right)t+t^2}{(1-t)^3}=
\sum_{i\geq 0}\left({i+1\choose 2}(-K_{X_P})^2+1\right)t^i.$$
Set $Q_{\text{num}}:=(1-t^R)Q_\sigma$. The contribution from $Q_\sigma$ at degree $i$ is equal to the coefficient of $t^m$ in $Q_{\text{num}}$, where $i\equiv m\modb{R}$.

\begin{example}\label{eg:degree_hilb_third_one_one}
Let $P\subset\NQ$ be a Fano polygon with singularity content $\left(n,\left\{m\times\third\right\}\right)$, for some $n\in\Z_{\geq 0}$, $m\in\Z_{>0}$. We see that $A_{\third}=5/3$ and $Q_{\third}=-t / 3(1-t^3)$, giving
\begin{align*}
\Vol{\dual{P}}=(-K_{X_P})^2&=12-n-\frac{5m}{3}\qquad\text{ and }\\
\Ehr{\dual{P}}=\Hilb{X_P,-K_{X_P}}&=\frac{1+(11-n-2m)t+(12-n-m)t^2+(11-n-2m)t^3+t^4}{(1-t^3)(1-t)^2}.
\end{align*}
In particular, for any $i\ge 0$,
$$
\abs{i\dual{P}\cap M}=h^0\left(X_P,-iK_{X_P}\right)=
{i+1\choose 2}(-K_{X_P})^2+1-
\begin{cases}
\frac{m}{3}&\text{ if }i\equiv 1\modb{3},\\
0&\text{ otherwise.}
\end{cases}
$$
Since $\abs{\dual{P}\cap M}=13-n-2m\geq 1$ we have that $0\leq n\leq 10$ and $1\leq m\leq6-n/2$. Notice that $\left(5,\{4\times\third\}\right)$, $\left(3,\{5\times\third\}\right)$, and $\left(1,\{6\times\third\}\right)$ give $h^0=0$, hence there cannot exist a corresponding Fano polygon $P$ (or toric surface $X_P$). They do, however, correspond to the Euler numbers and singular points of the del~Pezzo surfaces $X_{4,1/3}$, $X_{5,2/3}$, and $X_{6,7/2}$, respectively, in~\cite{CH}. These three del~Pezzo surfaces cannot be of class TG.
\end{example}

\subsection{The sublattice $\Tlattice{P}$ of $M$}\label{subsec:T_sublattice}
There exist examples of Fano polygons that have the same singularity content, but that are not mutation-equivalent (see Example~\ref{eg:smooth_case} below); there is additional structure in the arrangement of the primitive $T$-singularities that singularity content ignores.

\begin{definition}\label{defn:normal_vector_sublattice}
Let $P\subset\NQ$ be a Fano polygon. Define $\Tlattice{P}$ to be the sublattice of $M$ generated by the primitive inner normal vectors to the edges of $P$, so that
$$\Tlattice{P}:=\left<\widebar{u}\mid u\in\V{\dual{P}}\right>$$
where $\widebar{u}\in M$ is the unique primitive lattice vector generating the ray passing through $u$. Let $[M:\Tlattice{P}]$ denote the index of this sublattice in $M$.
\end{definition}

\begin{lemma}\label{lem:sublattice_index_preserved}
Let $P\subset\NQ$ be a Fano polygon and let $Q:=\mut_w(P,F)$ be a mutation of $P$. Then $[M:\Tlattice{P}] = [M:\Tlattice{Q}]$.
\end{lemma}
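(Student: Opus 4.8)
The plan is to prove the stronger statement that $\Tlattice{P}=\Tlattice{Q}$, from which equality of indices is immediate. First I would pass to the dual picture. By polar duality the vertices $u\in\V{\dual{P}}$ correspond bijectively to the edges of $P$, and $\widebar{u}$ is precisely the primitive inner normal of the corresponding edge; thus $\Tlattice{P}=\langle\widebar{u}\mid u\in\V{\dual{P}}\rangle$ is literally the lattice generated by the primitive inner normals to the edges of $P$. The mutation acts on the dual side through the piecewise-$\GL_2(\Z)$ map $\varphi\colon\MQ\to\MQ$ of \S\ref{subsec:mutation_M}, which satisfies $\varphi(\dual{P})=\dual{Q}$, is the identity on the half-space $\{u(v_E)\ge 0\}$, equals $u\mapsto u-u(v_E)w$ on $\{u(v_E)<0\}$, and has fold locus exactly the line $\R w=\{u\mid u(v_E)=0\}$ through the origin. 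Crucially, $w$ is the primitive inner normal of the very edge of $P$ along which we mutate, so $w\in\Tlattice{P}$, and $\varphi$ fixes $w$.

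Next I would compute the generators of $\Tlattice{Q}$. Since every ray from the origin lies entirely in one of the two closed half-spaces, $\varphi$ restricted to such a ray is given by a single $\GL_2(\Z)$ matrix, hence preserves primitivity along rays; a direct check then gives $\overline{\varphi(u)}=\varphi(\widebar{u})$ for each vertex $u$ of $\dual{P}$. As $\varphi(\widebar{u})$ equals either $\widebar{u}$ or $\widebar{u}-\widebar{u}(v_E)w$, and both $\widebar{u}$ and $w$ lie in $\Tlattice{P}$, every such generator lies in $\Tlattice{P}$. The step I expect to be the main obstacle is that mutation can create vertices of $\dual{Q}$ that are not images of vertices of $\dual{P}$: because $\varphi$ is only piecewise linear, it may introduce a kink where an edge of $\dual{P}$ crosses the fold, and a priori this could enlarge the generating set. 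The key observation that resolves this is that any such new vertex must lie on the fold locus $\R w$, so its primitive normal is $\pm w\in\Tlattice{P}$. Combining the two cases shows that every generator of $\Tlattice{Q}$ lies in $\Tlattice{P}$, i.e. $\Tlattice{Q}\subseteq\Tlattice{P}$.

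Finally I would obtain the reverse inclusion by symmetry. The inverse mutation is $P=\mut_{-w}(Q,F)$, so $-w$ is the primitive inner normal of an edge of $Q$; concretely this is the top slice $w_{\hmax}(P)+\hmax F$, which is a genuine edge of $Q$ of positive width and hence gives $w\in\Tlattice{Q}$. Applying the argument of the previous paragraph verbatim to this inverse mutation (now with mutating normal $-w\in\Tlattice{Q}$) yields $\Tlattice{P}\subseteq\Tlattice{Q}$. The two inclusions give $\Tlattice{P}=\Tlattice{Q}$, and in particular $[M:\Tlattice{P}]=[M:\Tlattice{Q}]$.
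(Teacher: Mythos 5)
Your argument is correct and follows essentially the same route as the paper: the paper's proof likewise observes that the piecewise-linear map $\varphi$ on $\MQ$ only ever adds multiples of $w\in\Tlattice{P}$, so it carries $\Tlattice{P}$ into itself, and then invokes invertibility of mutation (via $\mut_{-w}(\,\cdot\,,F)$) for the reverse containment. You simply make explicit two points the paper leaves implicit — that $\overline{\varphi(u)}=\varphi(\widebar{u})$ because $\varphi$ is unimodular on each half-space, and that any new vertex of $\dual{Q}$ created on the fold locus has primitive generator $\pm w\in\Tlattice{P}$ — and you record the (true, and slightly stronger) conclusion $\Tlattice{P}=\Tlattice{Q}$.
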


\begin{proof}
Recall from \S\ref{subsec:mutation_M} that mutation acts on the element of $M$ via the piecewise-linear map $\varphi:u\mapsto u-\umin w$, where $\umin:=\min{u(v_F)\mid v_F\in\V{F}}$. Since $w\in\Tlattice{P}$ (by definition $w$ is a primitive inner normal to an edge of $P$), $\varphi$ maps $\Tlattice{P}$ to itself. Mutations are invertible, with the inverse to $\mut_w(\,\cdot\,,F)$ being $\mut_{-w}(\,\cdot\,,F)$. Thus the index is preserved.
\end{proof}

\begin{example}\label{eg:smooth_case}
Consider the reflexive polygons
\begin{align*}
\RpolyJ:=&\sconv{(1,0),(0,1),(-1,-1),(0,-1)}\subset\NQ\\
\text{and }\RpolyK:=&\sconv{(1,0),(0,1),(-1,0),(0,-1)}\subset\NQ.
\end{align*}
These are depicted in Figure~\ref{fig:polygon_smoothable} on page~\pageref{fig:polygon_smoothable}. The toric varieties defined via the spanning fan are the del Pezzo surfaces $\FF_1$ and $\Proj^1\times\Proj^1$ respectively. Both \hyperlink{fig:RpolyJ}{$\RpolyJ$} and \hyperlink{fig:RpolyK}{$\RpolyK$} have singularity content $(4,\emptyset)$. The primitive normal vectors to the edges of $\RpolyJ$ generate all of $M$, whereas the normal vectors of the edges of $\RpolyK$ generate an index two sublattice $\Tlattice{P}=\left<(1,1),(-1,1)\right>\subset M$. Lemma~\ref{lem:sublattice_index_preserved} shows that $\RpolyJ$ and $\RpolyK$ are not mutation-equivalent. We will give another proof of this, using quiver mutations, in Example~\ref{eg:smooth_case_2} below.
\end{example}

\subsection{Quivers and cluster algebras}\label{subsec:quivers_and_clusters}
We first recall the definition of cluster algebra~\cite{FG09,FZ}, having fixed a rank-$n$ lattice $L$ and a skew-symmetric form $\{\cdot,\cdot\}$.

\begin{definition}\label{def:seed}
A \emph{seed} is a pair $(B,C)$ where $B$ is a basis of $L$ and $C$ is a transcendence basis of $\C(L)$, referred to as a \emph{cluster}.
\end{definition}

\begin{definition}\label{def:seed_mutation}
 Given a seed $(B,C)$ with $B= \{e_1, \ldots , e_n\}$ and $C = \{x_1,\ldots,x_n\}$, the \emph{$j$th mutation} of $(B,C)$ is the seed $(B',C')$, where $B' = \{e_1',\ldots,e_n'\}$ and $C' = \{x'_1,\ldots,x'_n\}$ are defined by:
$$
e'_k = \begin{cases}
-e_j, & \text{if $k = j$} \\
e_k + \bmax{b_{kj},0}e_j, & \text{otherwise}
\end{cases}
$$
 where $b_{kl} = \{e_k,e_l\}$,
\begin{align*}
x_k' = x_k\text{ if }k \ne j,&&
\text{ and }&&
x_jx'_j = \!\!\prod_{\stackrel{\scriptstyle k\text{ such that}}{b_{jk} > 0}}\!\!{x^{b_{jk}}_k} + \!\!\prod_{\stackrel{\scriptstyle l\text{ such that}}{b_{jl} < 0}}\!\!{x^{b_{lj}}_l}.
\end{align*}
\end{definition}

Recall from~\S\ref{sec:cluster_algebra_intro} that there is a quiver $Q_P$ and a cluster algebra $\cA_P$ associated to a Fano polygon $P$.  Let $\SC{P}=(n,\cB)$ and fix a numbering of the $n$ primitive $T$-cones in the spanning fan of $P$.  The $i$th primitive $T$-cone in the spanning fan of $P$ corresponds to a vertex $v_i$ of $Q_P$, and thus corresponds to a basis element $e_i$ in $B_P$, where $(B_P, C_P)$ is the initial seed for $\cA_P$.  Let $E_i$ denote the edge of $P$ determined by the $i$th primitive $T$-cone; note that different primitive $T$-cones can determine the same edge.

\begin{prop}[Mutations of seeds induce mutations of polygons] \label{prop:seed_mutations}
Let $P$ be a Fano polygon with singularity content $(n,\cB)$.  Denote by $w_i \in M$ the primitive inner normal vector to $E_i$.  Consider the map $\pi\colon L\rightarrow M$ such that $\pi(e_i) = w_i$ for each~$i$.  Let $(B_P,C_P)$ be the inital seed for $\cA_P$, and write $B = \{e_1,\ldots,e_n\}$, $C = \{x_1,\ldots,x_n\}$.  Let $(B',C')$ be the $j$th mutation of $(B_P,C_P)$, and write $B' = \{e_1',\ldots,e_n'\}$.  Then $\{\pi(e'_1),\ldots,\pi(e'_n)\}$ are the primitive inner normal vectors to the edges of $P'$, where $P'$ is the mutation of $P$ determined by $w_j$. Furthermore, every mutation $P'$ of $P$ arises in this way.
\end{prop}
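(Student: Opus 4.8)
\proofsection{Proof proposal}

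The plan is to push the seed-mutation formula through $\pi$ and compare it with the dual mutation map $\varphi\colon\MQ\to\MQ$ of \S\ref{subsec:mutation_M}, since the primitive inner normals of a polygon are exactly the primitive generators of the rays through the vertices of its dual. First I would fix the factor direction: let $v_E\in N$ be the primitive generator of $\ker w_j\subset N$ (this is the edge direction of $E_j$, defined up to sign), and record the linear-algebra identity relating evaluation against $v_E$ to the wedge against $w_j$. Both $u\mapsto u(v_E)$ and $u\mapsto w_j\wedge u$ are linear functionals on $\MQ$ whose common kernel is the line $\langle w_j\rangle$, hence they are proportional; since $v_E$ and $w_j$ are both primitive, the constant of proportionality is $\pm 1$. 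Choosing the sign of $v_E$ suitably gives $w_k(v_E)=-(w_k\wedge w_j)=-b_{kj}$ for every $k$ simultaneously.

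With this the computation of $\varphi$ on the normals is routine. Recall $\varphi(u)=u-\umin w_j$ with $\umin=\bmin{0,u(v_E)}$, so $\varphi(u)=u+\bmax{-u(v_E),0}\,w_j$. Substituting $u=w_k$ and $u(v_E)=-b_{kj}$ yields
$$\varphi(w_k)=w_k+\bmax{b_{kj},0}\,w_j=\pi(e'_k)\qquad(k\neq j),$$
which is precisely the seed-mutation rule $e'_k=e_k+\bmax{b_{kj},0}e_j$ transported through $\pi$. The index $j$ itself must be treated separately, since $w_j(v_E)=0$ forces $\varphi(w_j)=w_j\neq\pi(e'_j)$. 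Here I would argue geometrically: mutation with respect to $w_j$ creates a new edge $w_{\hmax}(P)+\hmax F$ at height $\hmax$, and in coordinates with $w_j=(0,1)$, $F=\sconv{\orig,(-1,0)}$ this face lies at constant height $\hmax$, so it is an edge with primitive inward normal $(0,-1)=-w_j=\pi(e'_j)$.

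It then remains to match these vectors with the primitive $T$-cones of $P'$. Because $\varphi\colon\dual{P}\to\dual{P'}$ is a piecewise-$\GL_2(\Z)$ isomorphism \cite[Proposition~4]{ACGK12}, each vertex of $\dual{P}$ off the fold line $\langle w_j\rangle$ is carried to a vertex of $\dual{P'}$ on the ray through its $\varphi$-image, and $\GL_2(\Z)$ preserves primitivity; so for $k\neq j$ the edge carrying the $k$th primitive $T$-cone maps to an edge of $P'$ with primitive inner normal $\varphi(w_k)$. Meanwhile mutation deletes the $j$th primitive $T$-cone from $E_j$ at height $\hmin$ and creates the new top cone with normal $-w_j$ — the mechanism underlying the invariance $\SC{P}=\SC{P'}$. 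Assembling these shows that $\{\pi(e'_1),\dots,\pi(e'_n)\}$ is exactly the multiset of primitive inner normals to the edges of $P'$ determined by its primitive $T$-cones. For the final assertion I would invoke Lemma~\ref{lem:mutation_iff_T-cone}: $P$ admits a mutation with respect to $w$ precisely when $w$ is the normal of an edge containing a primitive $T$-cone, every such $w$ equals some $w_j$, and $\mut_{w_j}(P,F)$ depends only on $w_j$; hence every mutation of $P$ is realised by mutating the seed at a suitable index $j$.

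The $\varphi$-computation is the easy part. The main obstacle is the bookkeeping of the previous paragraph: verifying that the $\varphi$-images together with the single new top edge account for \emph{all} primitive $T$-cones of $P'$, with the correct multiplicities. The delicate point is that neither the set of edges nor the per-edge distribution of primitive $T$-cones is preserved by mutation — edges can be created, destroyed, or merged along the fold line $\langle w_j\rangle$ — so one cannot simply transport cones edge by edge. I expect to control this by combining the explicit piecewise description of $\varphi$ near the fold with the invariance of singularity content, which fixes both the total number $n$ and the basket $\cB$ on the mutated side.
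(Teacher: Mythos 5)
Your proposal is correct and follows the same route the paper intends: the paper's proof consists of the single sentence ``This is a straightforward calculation using mutation in $M$,'' and your argument is precisely that calculation carried out in detail --- identifying $u\mapsto u(v_E)$ with $u\mapsto -(u\wedge w_j)$ via primitivity, checking $\varphi(w_k)=w_k+\bmax{b_{kj},0}w_j$ for $k\neq j$, and treating the index $j$ (new top edge with normal $-w_j$) separately. The bookkeeping of primitive $T$-cones and the use of Lemma~\ref{lem:mutation_iff_T-cone} for the final assertion are exactly the points the paper leaves implicit, and you handle them correctly.
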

\begin{proof}
This is a straightforward calculation using mutation in $M$ (see~\S\ref{subsec:mutation_M}).
\end{proof}

There is a well-known notion of quiver mutation, going back to Bernstein--Gelfand--Ponomarev~\cite{BGP73}, Fomin--Zelevinsky~\cite{FZ}, and others.

\begin{definition}\label{def:quiver_mutation}
Given a quiver $Q$ and a vertex $v$ of $Q$, the \emph{mutation of $Q$ at $v$} is the quiver $\mut(Q,v)$ obtained from $Q$ by:
\begin{enumerate}
\item\label{item:quiver_mutation_add_shortcuts}
adding, for each subquiver $v_1 \to v \to v_2$, an arrow from $v_1$ to $v_2$;
\item\label{item:quiver_mutation_delete_2_cycles}
deleting a maximal set of disjoint two-cycles;
\item\label{item:quiver_mutation_reverse_arrows}
reversing all arrows incident to $v$.
\end{enumerate}
The resulting quiver is well-defined up to isomorphism, regardless of the choice of two-cycles in~\eqref{item:quiver_mutation_delete_2_cycles}.
\end{definition}

\begin{prop}[Mutations of polygons induce mutations of quivers]\label{prop:quiver_mutations}
Let $P$ be a Fano polygon, let $v$ be a vertex of $Q_P$ corresponding to a primitive $T$-cone in $P$, and let $P'$ be the corresponding mutation of $P$. We have $Q_{P'} = \mut(Q_P,v)$.
\end{prop}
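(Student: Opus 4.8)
The plan is to reduce the statement to the standard compatibility between seed mutation and quiver (equivalently, exchange-matrix) mutation, exploiting the structural observation that the skew form on $L$ is pulled back from the determinant form on $M$. Recall that $Q_P$ is encoded by its exchange matrix $B = (b_{ik})$, where $b_{ik} := \{e_i, e_k\}$ is the signed number of arrows from $v_i$ to $v_k$; by the definition of $\cA_P$ we have $b_{ik} = w_i \wedge w_k$, so the number of arrows from $v_i$ to $v_k$ is $\max{w_i \wedge w_k, 0}$. The combinatorial quiver mutation of Definition~\ref{def:quiver_mutation} at the vertex $v = v_j$ is precisely the matrix mutation $\mu_j$ applied to $B$: this is the familiar reformulation in which adding shortcuts, cancelling two-cycles, and reversing arrows at $v_j$ together realise the rule $b_{ik} \mapsto -b_{ik}$ when $j \in \{i,k\}$ and $b_{ik} \mapsto b_{ik} + (\abs{b_{ij}} b_{jk} + b_{ij}\abs{b_{jk}})/2$ otherwise. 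I would state this reformulation at the outset and work with $\mu_j(B)$ throughout.

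The key structural point I would isolate is that the skew form on $L$ satisfies $\{a, b\} = \pi(a) \wedge \pi(b)$ for all $a, b \in L$, where $\pi\colon L \to M$ sends $e_i \mapsto w_i$. Indeed both sides are bilinear and agree on the basis $B_P$ by definition of the form. Consequently the form is a \emph{fixed} object on $L$ (mutation does not alter it; only the basis $B_P$ is replaced by $B'$), and evaluating it on any mutated basis computes determinants of the corresponding images under $\pi$.

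With this in hand the argument is a direct computation. Writing $B' = \{e_1', \ldots, e_n'\}$ for the $j$th seed mutation of $B_P$ (Definition~\ref{def:seed_mutation}), I would expand $\{e_i', e_k'\}$ using bilinearity together with $e_j' = -e_j$ and $e_i' = e_i + \max{b_{ij}, 0}\, e_j$; the self-pairing terms $\{e_j, e_j\}$ vanish, and a short manipulation via $\max{x, 0} = (x + \abs{x})/2$ shows that $\{e_i', e_k'\}$ equals exactly $\mu_j(B)_{ik}$. By the pullback identity this value is also $\pi(e_i') \wedge \pi(e_k') = w_i' \wedge w_k'$, where $w_i' := \pi(e_i')$. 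By Proposition~\ref{prop:seed_mutations}, the $w_i'$ are the primitive inner normal vectors to the edges of $P'$ (the mutation of $P$ determined by $w_j$), and the correspondence between $e_i'$ and the $i$th primitive $T$-cone of $P'$ identifies the vertex sets. Hence the number of arrows from vertex $i$ to vertex $k$ in $Q_{P'}$ is $\max{w_i' \wedge w_k', 0} = \max{\mu_j(B)_{ik}, 0}$, which is precisely the arrow count of $\mut(Q_P, v_j)$; therefore $Q_{P'} = \mut(Q_P, v)$.

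The main obstacle is bookkeeping rather than conceptual: one must verify carefully that the three combinatorial steps of Definition~\ref{def:quiver_mutation} coincide with the matrix rule $\mu_j$ (so that reading off arrow counts via $\max{\cdot, 0}$ is legitimate), and that the self-pairing terms and the $\max$-versus-$\abs{\cdot}$ identities are handled without sign errors, including the degenerate cases in which several vertices share a common normal vector (so that $\pi$ is non-injective and some $b_{ik}$ vanish). Once the pullback identity $\{a, b\} = \pi(a) \wedge \pi(b)$ is established, these checks are routine, and no input beyond Proposition~\ref{prop:seed_mutations} is required.
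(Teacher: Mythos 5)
Your proof is correct, but it takes a genuinely different route from the paper's. The paper argues directly in $M$: it writes the mutation as a piecewise-$\GL_2(\Z)$ map that is the identity on one half-space and a shear $u\mapsto u+(w\wedge u)w$ on the other, and then checks by a case analysis (normal vectors equal to $w$, in the same half-space, or in opposite half-spaces) that the determinants $w_i\wedge w_k$ transform exactly as the three combinatorial steps of Definition~\ref{def:quiver_mutation} prescribe, the cross-term $(w\wedge w_2)(w_1\wedge w)$ accounting for the added shortcut arrows. You instead factor the statement through the cluster-algebra formalism: the pullback identity $\{a,b\}=\pi(a)\wedge\pi(b)$, the standard computation that seed mutation of the basis induces Fomin--Zelevinsky matrix mutation of the exchange matrix $b_{ik}=\{e_i,e_k\}$, the standard identification of matrix mutation with combinatorial quiver mutation (legitimate here since $Q_P$ has no loops or two-cycles), and Proposition~\ref{prop:seed_mutations} to identify $\pi(e_i')$ with the normal data of $P'$. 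Both arguments are complete; yours is more conceptual, exhibiting the proposition as a formal consequence of Proposition~\ref{prop:seed_mutations} together with facts internal to cluster theory, while the paper's is self-contained and keeps the geometry visible. Two caveats on your version. First, the geometric content has not disappeared but has been pushed into Proposition~\ref{prop:seed_mutations}, whose proof the paper only sketches, so the half-space computation you avoid here must still be carried out somewhere. Second, you need slightly more from Proposition~\ref{prop:seed_mutations} than its literal statement (which speaks only of the set of primitive inner normals of the edges of $P'$): namely that the correspondence $e_i'\leftrightarrow(\text{$i$th primitive $T$-cone of $P'$})$ respects multiplicities when several primitive $T$-cones share an edge, so that the arrow counts $\max{\pi(e_i')\wedge\pi(e_k'),0}$ really compute $Q_{P'}$. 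You flag this degenerate case, and it is what the omitted calculation behind Proposition~\ref{prop:seed_mutations} establishes, so this is a presentational point rather than a gap.
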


\begin{proof}
Let $E$ denote the edge of $P$ determined by the primitive $T$-cone corresponding to $v$, and let $w\in M$ denote the primitive inner normal vector to $E$. Mutation with respect to $w$ acts on $M$ as a piecewise-linear transformation that is the identity in one half-space, and on the other half-space is a shear transformation $u\mapsto u+(w\wedge u)w.$ Thus determinants between the pairs of normal vectors change as follows:
\begin{enumerate}
\item
The inner normal vector $w$ to the mutating edge $E$ becomes $-w$, so that all arrows into $v$ change direction;
\item
For a pair of normal vectors in the same half-space (as defined by $w$), the determinant does not change;
\item Consider primitive $T$-cones with inner normal vectors in different half-spaces (as defined by $w$), let the corresponding vertices of $Q_P$ be $v_1$ and $v_2$, and let the corresponding inner normal vectors in $M$ be $w_1$ and $w_2$.  Without loss of generality we may assume that $w_1\wedge w>0$ and $w_2\wedge w < 0$, so that there are arrows $v_1\rightarrow v\rightarrow v_2$ in $Q_P$.  Under mutation, the primitive inner normal vectors change as $w_1\mapsto w_1'$, $w_2\mapsto w_2'$ where $w_1'=w_1$, $w_2'=w_2 + (w\wedge w_2)w$.  Thus:
$$w_1' \wedge w_2'=w_1\wedge w_2+(w\wedge w_2)(w_1\wedge w)$$
and so we add an arrow for each path $v_1\rightarrow v\rightarrow v_2$. Cancelling two-cycles results in precisely the result of calculating the signed total number of arrows from $v_1$ to $v_2$.
\end{enumerate}
Observing finally that if $v_1$,~$v_2$ give normal vectors in the same half-space then there are no paths $v_1\rightarrow v\rightarrow v_2$ or $v_2\rightarrow v\rightarrow v_1$, we see that this description coincides with that of a quiver mutation.
\end{proof}

Propositions~\ref{prop:seed_mutations} and~\ref{prop:quiver_mutations} give upper and lower bounds on the mutation graphs of Fano polygons. For example:

\begin{cor}[see Example~\ref{example:pentagon}] \label{cor:pentagon}
If a Fano polygon $P$ has singularity content $(2,\cB)$ and is such that the primitive inner normal vectors of the two edges corresponding to the two primitive $T$-cones form a basis of the dual lattice $M$, then the mutation-equivalence class of $P$ has at most five members.
\end{cor}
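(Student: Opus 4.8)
The plan is to reduce the claim to the classical combinatorics of the type-$A_2$ cluster algebra, exploiting the two correspondences established in Propositions~\ref{prop:seed_mutations} and~\ref{prop:quiver_mutations}. First I would pin down the quiver $Q_P$. Since $\SC{P}=(2,\cB)$, the quiver has exactly two vertices $v_1,v_2$, one for each primitive $T$-cone, with corresponding primitive inner normal vectors $w_1,w_2\in M$. By hypothesis $\{w_1,w_2\}$ is a basis of $M$, so in particular $w_1\neq w_2$ (the two $T$-cones lie on genuinely distinct edges) and $w_1\wedge w_2=\pm1$; after relabelling the vertices I may assume $w_1\wedge w_2=1$. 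Then the number of arrows from $v_1$ to $v_2$ is $\max{w_1\wedge w_2,0}=1$, while the number from $v_2$ to $v_1$ is $\max{w_2\wedge w_1,0}=0$, so $Q_P$ is the $A_2$ quiver $v_1\to v_2$.

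Next I would show that the whole mutation-equivalence class of $P$ is controlled by the seeds reachable from the initial seed $(B_P,C_P)$ of $\cA_P$. The essential point is that a single polygon mutation corresponds to a single seed mutation and that this correspondence can be iterated. By Proposition~\ref{prop:seed_mutations}, every mutation $P'$ of $P$ is realised by the $j$th mutation $(B',C')$ of $(B_P,C_P)$, in the sense that the vectors $\pi(e_i')$ are the primitive inner normal vectors of $P'$. By Proposition~\ref{prop:quiver_mutations} the quiver $Q_{P'}$ equals $\mut(Q_P,v_j)$, which is exactly the quiver of the mutated seed $(B',C')$; hence $(B',C')$ may be taken as the initial seed of $\cA_{P'}=\cA_P$. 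Repeating this, one proves by induction on the length of a mutation sequence that every Fano polygon mutation-equivalent to $P$ arises, up to $\GL_2(\Z)$-equivalence, from some seed in the mutation class of $(B_P,C_P)$.

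I would then appeal to the classical fact that the type-$A_2$ cluster algebra has a pentagonal exchange graph, and hence exactly five seeds up to the natural equivalence~\cite{FZ}. Since, by the previous step, the assignment sending a reachable seed to the Fano polygon determined by its normal vectors $\pi(e_i)$ surjects onto the mutation-equivalence class of $P$, that class consists of at most five members, as claimed.

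The step I expect to be the main obstacle is the iteration in the second paragraph: Propositions~\ref{prop:seed_mutations} and~\ref{prop:quiver_mutations} are each phrased for a single mutation, so one must verify that the mutated seed $(B',C')$ really is the seed attached to the mutated polygon $P'$ before the argument can be applied again. This compatibility follows by matching the two descriptions of the quiver $Q_{P'}$, after which the bound drops out of the pentagonal exchange graph. One should also observe that the count need not be exactly five, since a priori distinct seeds might produce $\GL_2(\Z)$-equivalent polygons; but this can only decrease the number of members, so it does not affect the upper bound.
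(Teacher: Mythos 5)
Your proposal is correct and follows essentially the same route as the paper: identify $Q_P$ as the $A_2$ quiver, invoke Proposition~\ref{prop:seed_mutations} to control polygon mutations by seed mutations, and conclude from the pentagonal exchange graph of the $A_2$ cluster algebra. The only difference is that you spell out the iteration step (that the mutated seed is again the seed attached to the mutated polygon) and the possible non-injectivity of the seed-to-polygon map, both of which the paper leaves implicit.
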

\begin{proof}
The quiver associated to $P$ is simply the $A_2$ quiver. The cluster algebra $\cA_P$ is therefore well-known (being the cluster algebra associated to the $A_2$ quiver) and its cluster exchange graph forms a pentagon. Note however that the \emph{quiver} mutation graph is trivial, as the $A_2$ quiver mutates only to itself. Proposition~\ref{prop:seed_mutations} implies that the mutation graph of $P$ has at most five vertices. (Proposition~\ref{prop:quiver_mutations} does not give a non-trivial lower bound here: indeed polygon~\hyperlink{poly:7}{7} in Figure~\ref{fig:third_one_one_polygons} on page~\pageref{fig:third_one_one_polygons} gives an example of such a polygon $P$ with trivial mutation graph.)
\end{proof}

\begin{example}\label{eg:smooth_case_2}
For the polygons \hyperlink{fig:RpolyJ}{$\RpolyJ$} and \hyperlink{fig:RpolyK}{$\RpolyK$} considered in Example~\ref{eg:smooth_case} above the associated quivers $Q_{\RpolyJ}$ and $Q_{\RpolyK}$ are:
$$
\begin{array}{c@{\hspace{1.5cm}}c}
\xymatrix{
\bullet \ar@2{->}[r] & \bullet \ar@3{->}[d] \\
\bullet \ar[u] \ar[ur] & \bullet \ar@2{->}[l] \ar[ul]
}&
\xymatrix{
\bullet \ar@2{->}[r] & \bullet \ar@2{->}[d]\\
\bullet \ar@2{->}[u] & \bullet \ar@2{->}[l]
}\\
\phantom{{}_{\RpolyJ}}Q_{\RpolyJ}&\phantom{{}_{\RpolyK}}Q_{\RpolyK}
\end{array}
$$
Observe that for $Q_{\RpolyK}$ the number of arrows between any two vertices is even.  It is easy to see that this property is preserved under mutation. Therefore the quivers $Q_{\RpolyJ}$ and $Q_{\RpolyK}$ are not mutation-equivalent, and so the polygons $\RpolyJ$ and $\RpolyK$ are not mutation-equivalent.
\end{example}

\subsection{Affine manifolds}\label{subsec:affine_manifolds}
This section sketches a more geometric approach to finding mutation invariants of polygons.  A detailed description of this material will be presented in forthcoming work of Prince~\cite{PrincePhD}, based on the foundational papers of Gross--Siebert~\cite{GS-INV} and Kontsevich--Soibelman~\cite{KS06}. In broad terms, one wishes to generalise the fact that the dual polygon $\dual{P}$ is the base of a special Lagrangian torus fibration given by the moment map, by allowing more general bases and more general torus fibrations. Specifically, the base of a special Lagrangian torus fibration carries an \emph{affine structure}~\cite{KS06,GS-PROGRAM}.  Deforming the torus fibration then leads to a deformation of the ``candidate base'' (actually exhibiting  a fibration over the deformed base is fraught with difficulty); the deformed base does not have the structure of a polygon, but rather is an \emph{affine manifold}.

\begin{definition}
A two-dimensional \emph{integral affine manifold} $B$ is a manifold which admits a maximal atlas with transition functions in $GL_2(\Z)\rtimes \Z^2$
\end{definition}

\begin{example}
The interior of $\dual{P}$ is an example of an integral affine manifold, covered as it is by a single chart. The polygon $\dual{P}$ itself is an example of an \emph{affine manifold with corners}.
\end{example}

Allowing singular fibres in the special Lagrangian torus fibration corresponds roughly to the base manifold acquiring \emph{focus-focus singularities}. We say ``roughly'' because, following~\cite{GS-PROGRAM}, we should be considering toric \emph{degenerations} rather than torus fibrations; see~\cite{GS-INV,KS06}. The local model for such a singularity is $\R^2\setminus\{\orig\}$, regarded as an affine manifold via a cover with two charts
$$
U_1=\R^2\setminus\R_{\leq 0}\times\{0\}\qquad
\text{ and }\qquad
U_2=\R^2\setminus\R_{\geq 0}\times\{0\}
$$
and the transition function
$$
(x,y)\mapsto
\begin{cases}
(x,y),&\text{ if }y>0;\\
(x + y,y),&\text{ if }y<0.
\end{cases}
$$
Sliding singularities, which Kontsevich--Soibelman call \emph{moving worms}~\cite{KS06}, gives an affine analogue of the deformations of the varieties.  In particular, allowing singularities to collide with boundary points of the affine manifold creates corners and provides an analogue for the toric qG-degenerations of the surface. This process, and its lifting via the Gross--Siebert program to construct the corresponding degeneration of algebraic varieties, will be described in~\cite{PrincePhD}.

Given the dual polygon $\dual{P}$, the set of singularities that we can introduce is in natural bijection with the primitive $T$-cones appearing in the singularity content of $P$. Consider the following process. Take a $T$-cone $\sigma$ of $P$, and introduce the corresponding singularity into the interior of $\dual{P}$, partially smoothing that corner and forming an affine manifold $B$. Now slide this singularity along the monodromy-invariant line, all the way to the opposite of $B$, and so forming a polygon $\dual{P'}$ with dual polygon $P'$.  

\begin{lemma}
$P'$ is equal to the mutation of $P$ defined by the chosen $T$-cone $\sigma$.
\end{lemma}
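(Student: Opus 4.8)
The plan is to transport everything to $\MQ$ and reduce the claim to the single identity $\dual{P'}=\varphi(\dual{P})$, where $\varphi$ is the piecewise-$\GL_2(\Z)$ mutation map of \S\ref{subsec:mutation_M} attached to the edge $E$ of $P$ cut out by $\sigma$. Granting this, the relation $\varphi(\dual{P})=\dual{Q}$ for $Q=\mut_w(P,F)$ recalled in \S\ref{subsec:mutation_M} immediately gives $\dual{P'}=\dual{Q}$ and hence $P'=Q=\mut_w(P,F)$, which is the assertion. The first task is therefore to match the two geometric pictures. Writing $w\in M$ for the primitive inner normal to $E$ and $r_E$ for its height, the vertex of $\dual{P}$ dual to $E$ is $w/r_E$, and the fixed line of $\varphi$, namely $\{u:u(v_E)=0\}$, is precisely the ray $\R w$ through that vertex. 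I would then check that, in coordinates adapted to $w$ and $v_E$, the unit monodromy of the local focus-focus model $(x,y)\mapsto(x+y,y)$ (for $y<0$) agrees with the unit shear $u\mapsto u-u(v_E)w$ on the half-space $\{u(v_E)<0\}$; this coordinate comparison is what pins the affine-geometry construction to the combinatorial mutation and identifies the invariant line of the singularity with the fixed locus of $\varphi$.

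With this dictionary in place, the argument runs through the developing map of the affine manifold $B$. When the singularity sits at the corner $w/r_E$ it merely smooths that vertex, and $B$ develops (away from a branch cut taken along $\R w$) to $\dual{P}$ itself. As the singularity is transported along the invariant line to the opposite intersection of $\R w$ with $\bdry{\dual{P}}$, the developing map is unchanged on one side of the cut and is acted on by the monodromy shear on the other; this is the defining behaviour of an affine manifold carrying a focus-focus singularity, across whose branch cut two developments differ by the monodromy. At the end of the slide a corner is recreated on the far side, and the developed polygon is exactly $\dual{P}$ with the unit shear applied to the half-space $\{u(v_E)<0\}$ and the identity elsewhere, that is, $\varphi(\dual{P})$. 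This yields $\dual{P'}=\varphi(\dual{P})$ and completes the reduction.

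The hard part is the middle step: justifying rigorously that transporting the singularity along $\R w$ realises $\varphi$, the shear on one half-space and the identity on the other, with exactly unit magnitude. This needs (i) a choice of conventions fixing the branch cut along $\R w$ and the sheared half-space as $\{u(v_E)<0\}$, the remaining sign ambiguity being the harmless choice $F\leftrightarrow-F$ of \S\ref{subsec:mutation_N} that produces $\GL_2(\Z)$-equivalent mutations; (ii) a verification that at the two ends of the slide $B$ genuinely has corners at $w/r_E$ and at the opposite boundary point, so that $\dual{P'}$ is an honest polygon rather than merely an affine manifold with corners; and (iii) the observation that no other vertex of $\dual{P}$ is moved, which holds because $\varphi$ fixes $\R w$ pointwise and is linear on each of the two half-spaces containing the remaining vertices. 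A careful treatment of the developing map and of the transport of singularities belongs to \cite{PrincePhD}; the essential mathematical content here is the coordinate identification of the focus-focus monodromy with $\varphi$, after which $\dual{P'}=\varphi(\dual{P})$ and the duality of \S\ref{subsec:mutation_M} close the argument.
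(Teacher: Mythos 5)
Your argument is correct and follows essentially the same route as the paper: the paper's proof is a three-sentence sketch identifying the transition function between the two charts of $B$ with the piecewise-$\GL_2(\Z)$ map $\varphi$ on $\dual{P}$, and observing that once the singularity reaches the far boundary one chart covers all but a vanishing segment, so $\dual{P'}=\varphi(\dual{P})$. Your write-up is a more detailed unpacking of exactly this (via the developing map, with the rigorous treatment likewise deferred to \cite{PrincePhD}), so there is nothing to correct.
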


\begin{proof}
Mutation induces a piecewise-linear transformation on the dual polygon. In fact this corresponds exactly to the transition function between the two charts defining $B$. As the singularity approaches a corner, one of the charts covers all but a line segment with vanishing length, hence the polygons are related by the piecewise-linear transition function applied to the entire polygon.
\end{proof}

Given a polygon $\dual{P}$ one can introduce a maximal set of singularities and thus form an affine manifold $B$. Regarding two affine manifolds which differ by moving singularities along monodromy-invariant lines as equivalent, we see that every polygon in the same mutation class is equivalent as an affine manifold.  This gives a mutation invariant -- the affine manifold $B$ -- which we can use to distinguish minimal polygons. Specifically, we can compare the respective \emph{monodromy representations}~\cite{DBRANES09}. Since the transition functions of $B$ are in $GL_2(\Z)$ we can define parallel transport of integral vector fields. Fixing a basepoint in $B$, parallel transport around loops gives a representation $\pi_1(B)\rightarrow\SL_2(\Z)$. Properties of this representation may be used to distinguish different mutation classes of polygons. For example, in the cases considered in Example~\ref{eg:smooth_case}, one monodromy representation is surjective and the other is not.

\section{Minimal Fano polygons}\label{sec:minimality}
Given a polygon $P\subset\NQ$ we want to find a preferred representative in the mutation-equivalence class of $P$. Let $\bdry{P}$ denote the boundary of $P$ and let $\intr{P}:=P\setminus\bdry{P}$ denote the strict interior of $P$. We introduce the following definition:

\begin{definition}\label{defn:minimaility}
We call a Fano polygon $P\subset\NQ$ \emph{minimal} if for every mutation $Q:=\mut_w(P,F)$ we have that $\abs{\bdry{P}\cap N}\leq\abs{\bdry{Q}\cap N}$.
\end{definition}

Minimality is a local property of the mutation graph. It is certainly possible for there to exist more than one minimal polygon in a given mutation-equivalence class (see Example~\ref{eg:reflexive_are_minimal} below), however the number is finite. This is shown in Theorem~\ref{thm:T-sing_minimals} in the case when $\cB=\emptyset$, and in Theorem~\ref{thm:general_minimals} when $\cB\neq\emptyset$. Given a polygon $P\subset\NQ$ one can easily construct a mutation-equivalent minimal polygon. Set $P_0:=P$ and recursively define $P_{i+1}$ as follows. Let $\Gamma_i:=\{P_i\}\cup\{\mut_w(P_i,F)\mid\text{ for all possible }w\in M\}$ where, as usual, we regard a polygon as being defined only up to $\GL_2(\Z)$-equivalence. Pick $P_{i+1}\in\Gamma_i$ such that $\abs{\bdry{P_{i+1}}\cap N}=\min{\abs{\bdry{Q}\cap N}\mid Q\in\Gamma_i}$. If $\abs{\bdry{P_i}\cap N}=\abs{\bdry{P_{i+1}}\cap N}$ we stop, and $P_i$ is minimal. Notice that this process must terminate in a finite number of steps.

\begin{lemma}[Characterisation of minimality]\label{lem:characterisation}
Let $P\subset\NQ$ be a Fano polygon. The following are equivalent:
\begin{enumerate}
\item
$P$ is minimal;
\item
$\abs{\bdry{P}\cap N}\leq\abs{\bdry{Q}\cap N}$ for every mutation $Q:=\mut_w(P,F)$;
\item\label{item:characterisation_intr}
$\abs{\intr{P}\cap N}\leq\abs{\intr{Q}\cap N}$ for every mutation $Q:=\mut_w(P,F)$;
\item\label{item:characterisation_vol}
$\Vol{P}\leq\Vol{Q}$ for every mutation $Q:=\mut_w(P,F)$;
\item\label{item:characterisation_heights}
$r_1+\cdots+r_n\leq s_1+\cdots+s_n$ for every mutation $Q:=\mut_w(P,F)$, where the $r_i$ (respectively $s_i$) are the heights of the primitive $T$-cones associated with $P$ (respectively $Q$).
\end{enumerate}
\end{lemma}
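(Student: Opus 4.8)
The statement claims the equivalence of five conditions. I will prove it by establishing a cycle of implications, but the crucial observation is that several of these invariants are controlled by a single quantity that transforms predictably under mutation. Since conditions (i) and (ii) are identical by Definition~\ref{defn:minimaility}, the real content is the equivalence of the boundary-point count, the interior-point count, the volume, and the sum of $T$-cone heights. My plan is to show that \emph{each} of these four quantities changes under a mutation $Q:=\mut_w(P,F)$ by the \emph{same} monotone function of the data, so that minimising one is equivalent to minimising any other. Concretely, I would compute, for a fixed mutation with normal vector $w$, the differences $\Vol{Q}-\Vol{P}$, $\abs{\bdry{Q}\cap N}-\abs{\bdry{P}\cap N}$, $\abs{\intr{Q}\cap N}-\abs{\intr{P}\cap N}$, and $(s_1+\cdots+s_n)-(r_1+\cdots+r_n)$, and verify that they all have the same sign.

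\textbf{Volume and the height-sum.} The cleanest link is between (iv) and (v). Recall from Proposition~\ref{prop:sing_content_degree} that $\Vol{P}$ can be decomposed as a sum of contributions, and more directly one can write $2\Vol{P}=\sum_E r_E\cdot(\text{width of }E)$, summing lattice-triangle areas $\conv{\orig,E}$ over the edges $E$. Since singularity content is a mutation invariant (by~\cite[Proposition~3.6]{AK14}), the residual cones contribute identically to $\Vol{P}$ and $\Vol{Q}$; only the primitive $T$-cones move. Each primitive $T$-cone at height $r_i$ contributes area $r_i^2$ to $\Vol{P}$ (a width-$r_i$ segment at height $r_i$). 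Hence $2\Vol{P}=\sum_i r_i^2 + (\text{residual contribution})$, and likewise for $Q$ with the $s_i$. The difficulty is that (v) is phrased in terms of the \emph{sum} of heights while the volume involves the \emph{sum of squares}; I would need the intuition from~\S\ref{subsec:sing_content} that a single mutation with respect to $w$ removes one primitive $T$-cone (at height $\hmin=r_E$) and introduces one at height $\hmax$, leaving all others fixed, so that both differences reduce to comparing two individual heights and the sum-versus-sum-of-squares distinction collapses to a single sign comparison $\hmax$ versus $\hmin$.

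\textbf{Point counts via Pick.} The bridge from volume to the lattice-point conditions (ii) and (iii) is Pick's formula~\cite{Pick}, $\Vol{P}=\abs{\intr{P}\cap N}+\tfrac12\abs{\bdry{P}\cap N}-1$ (with $\Vol{}$ the normalised, i.e.\ twice-Euclidean, area), which immediately couples $\Vol{}$, $\abs{\intr{}\cap N}$, and $\abs{\bdry{}\cap N}$. The remaining ingredient I would supply is that mutation preserves the number of boundary lattice points coming from the unchanged edges and transforms the mutating edge predictably; combined with the invariance of the basket this pins down $\abs{\bdry{Q}\cap N}-\abs{\bdry{P}\cap N}$ in terms of $\hmax-\hmin$. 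Once all three of $\Delta\Vol$, $\Delta\abs{\bdry{}\cap N}$, and the height-sum difference are shown to be increasing functions of the \emph{same} scalar $\hmax-\hmin$, Pick's formula forces $\Delta\abs{\intr{}\cap N}$ to match in sign as well, and the four conditions are simultaneously equivalent to the single inequality $\hmax\ge\hmin$ holding for every choice of mutating edge.

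\textbf{Main obstacle.} The hard part will be controlling the boundary lattice points precisely under the piecewise-linear action, since mutation can create or destroy vertices (recall the bound $\abs{\V{P}}\le n+\abs{\cB}$ from~\eqref{eq:bound_on_rank}) and can merge or split edges lying in the upper half-space determined by $w$. I expect the bookkeeping of how many lattice points the new edge $w_{\hmax}(P)+\hmax F$ contributes, versus the old edge $E$ at height $\hmin$, to require the explicit Minkowski-sum structure of Definition~\ref{defn:mutation} rather than a soft invariance argument; this is where I would spend the most care, using Lemma~\ref{lem:mutation_iff_length} to track widths. Once this single combinatorial computation is done, the equivalences (i)$\Leftrightarrow$(ii)$\Leftrightarrow$(iii)$\Leftrightarrow$(iv)$\Leftrightarrow$(v) all fall out together, since each reduces to the statement that the relevant quantity is nondecreasing under every admissible mutation precisely when $\hmax\ge\hmin$ throughout.
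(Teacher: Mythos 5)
Your proposal is correct and follows essentially the same route as the paper: each of the four quantities is a mutation-invariant basket contribution plus a strictly increasing function ($x$, $x(x-1)/2$, $x^2$) summed over the primitive $T$-cone heights, and a single mutation replaces one height $\abs{\hmin}$ by $\hmax$, so all the differences share the sign of $\hmax-\abs{\hmin}$. The ``main obstacle'' you anticipate --- tracking boundary lattice points through the explicit Minkowski-sum construction --- dissolves in the paper's treatment, which simply writes $\abs{\bdry{P}\cap N}=\sum_i r_i+(\text{basket constant})$ and $\abs{\intr{P}\cap N}=1+\tfrac{1}{2}\sum_i r_i(r_i-1)+(\text{basket constant})$ directly from the $T$-cone/residual-cone decomposition of the edges, then invokes Pick's formula and the mutation-invariance of singularity content.
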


\begin{proof}
Let $C$ be an arbitrary cone (not necessarily a $T$-cone) corresponding to the cyclic quotient singularity $\frac{1}{R}(a,b)$. Let  $\rho_1,\rho_2\in N$ be the primitive lattice vectors generating the rays of $C$. Recall that $k=\gcd{R,a+b}$ is the width of the line segment $\rho_1\rho_2$, and that $r=R/\gcd{R,a+b}$ is the height of $\rho_1\rho_2$. Set $D:=\sconv{\rho_1,\rho_2,\orig}$. Since $R=k+2\abs{\intr{D}\cap N}$ we have that:
$$\abs{\intr{D}\cap N}=\frac{R-k}{2}=\frac{k(r-1)}{2}.$$
If $C$ is a primitive $T$-cone then $r=k$.

Let $P$ have singularity content $(n,\cB)$, and let $r_1,\ldots,r_n$ be the heights of the primitive $T$-cones. Then the number of boundary points is
\begin{equation}\label{eq:minimal_bdry}
\abs{\bdry{P}\cap N}=\sum_{i=1}^nr_i+\sum_\cB(\abs{\bdry{D}\cap N}-1),
\end{equation}
where $\sum_\cB(\abs{\bdry{D}\cap N}-1)$ is the contribution arising from the basket $\cB$, and the number of interior points is given by
\begin{equation}\label{eq:minimal_intr}
\abs{\intr{P}\cap N}=1+\frac{1}{2}\sum_{i=1}^nr_i(r_i-1)+\sum_\cB\abs{\intr{D}\cap N},
\end{equation}
where $\sum_\cB\abs{\intr{D}\cap N}$ is the contribution arising from the basket $\cB$. Notice that the values of both $\sum_\cB(\abs{\bdry{D}\cap N}-1)$ and $\sum_\cB\abs{\intr{D}\cap N}$ are fixed under mutation. By applying Pick's formula we obtain
\begin{equation}\label{eq:minimal_vol}
\Vol{P}=\abs{\bdry{P}\cap N}+2\abs{\intr{P}\cap N}-2=\sum_{i=1}^nr_i^2+B
\end{equation}
where $B:=\sum_\cB(\abs{\bdry{D}\cap N}-1)+2\sum_\cB\abs{\intr{D}\cap N}$ is a constant under mutation. Mutation can change the value of only one $r_i$ at a time, hence equations~\eqref{eq:minimal_bdry},~\eqref{eq:minimal_intr}, and~\eqref{eq:minimal_vol} are all locally minimal with respect to mutation if and only if $r_1+\ldots+r_n$ is locally minimal with respect to mutation.
\end{proof}

\begin{remark}\label{rem:minimality_like_diophantine}
Recall that mutations between Fano triangles are characterised in terms of solutions to a Diophantine equation~\cite{AK13}. Every solution can be obtained from a minimal solution -- a solution $(a,b,c)\in\Z_{>0}^3$ whose sum $a+b+c$ is minimal -- and a minimal solution corresponds to a triangle with smallest volume~\cite[Lemma~3.16]{AK13}. By Lemma~\ref{lem:characterisation}~\eqref{item:characterisation_vol} we see that the notion of minimality introduced in Definition~\ref{defn:minimaility} above can be viewed as a generalisation of the concept of minimal solution.
\end{remark}

\begin{example}\label{eg:reflexive_are_minimal}
Any reflexive polygon is minimal by Lemma~\ref{lem:characterisation}~\eqref{item:characterisation_intr}, since $\abs{\intr{P}\cap N}=1$. In particular this gives us examples of mutation-equivalent polygons $P_1\not\cong P_2$, both of which are minimal. For example, one could take $P_1=\sconv{(\pm1,0),(0,\pm1)}$ and $P_2=\sconv{(1,0),(0,1),(-1,-2)}$ (the polygons associated with $\Proj^1\times\Proj^1$ and $\Proj(1,1,2)$, respectively).
\end{example}

\noindent
An immediate consequence of Lemma~\ref{lem:characterisation}~\eqref{item:characterisation_heights} is the following:

\begin{cor}\label{cor:minimise_heights}
Let $P\subset\NQ$ be a Fano polygon. For each edge $E$ of $P$ let $w_E\in M$ denote the corresponding primitive inner normal vector and let $k_E$ denote the width. $P$ is minimal if and only if $\abs{\hmin}\leq\hmax$ for each $w_E$ such that $k_E\geq\abs{\hmin}$.
\end{cor}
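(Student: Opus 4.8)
The plan is to reduce the statement to the height characterisation of minimality in Lemma~\ref{lem:characterisation}~\eqref{item:characterisation_heights}, combined with the existence criterion for mutations in Lemma~\ref{lem:mutation_iff_length}. By Lemma~\ref{lem:characterisation}~\eqref{item:characterisation_heights}, $P$ is minimal precisely when $r_1+\cdots+r_n\leq s_1+\cdots+s_n$ for every mutation $Q:=\mut_w(P,F)$, where $r_1,\ldots,r_n$ and $s_1,\ldots,s_n$ are the heights of the primitive $T$-cones of $P$ and $Q$ respectively. Since the number $n$ of primitive $T$-cones is a mutation invariant (singularity content is preserved), the two lists have equal length, so it suffices to understand exactly how the multiset of heights changes under a single mutation.

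First I would fix an edge $E$ with primitive inner normal $w=w_E$ that admits a mutation, and compute the change in the multiset of heights. As recorded in the discussion following Definition~\ref{defn:sing_content_polygon}, mutation with respect to $w$ deletes a primitive $T$-cone sitting on the edge $E$ at height $\hmin$ and creates a primitive $T$-cone on the top edge at height $\hmax$, leaving all residual cones---and all other primitive $T$-cones---untouched. The deleted $T$-cone sits on $E$, whose height is $r_E=\abs{\hmin}$, so the multiset of heights loses one copy of $\abs{\hmin}$ and gains one copy of $\hmax$. Hence $s_1+\cdots+s_n=(r_1+\cdots+r_n)-\abs{\hmin}+\hmax$, and the minimality inequality for this particular $Q$ reduces to $\abs{\hmin}\leq\hmax$.

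To make the previous paragraph rigorous I would pin down the height of the newly created $T$-cone using the invertibility of mutation. Writing $Q=\mut_w(P,F)$, we have $P=\mut_{-w}(Q,F)$, and the bottom edge of $Q$ with respect to the normal $-w$ is exactly the top edge of $Q$ at height $\hmax$ (a segment perpendicular to $w$ with primitive inner normal $-w$). Applying the bottom-edge analysis of Lemma~\ref{lem:mutation_iff_length} to $-w$ shows that this edge decomposes into primitive $T$-cones of height $\hmax$, so the $T$-cone added by $\mut_w(\,\cdot\,,F)$ indeed has height $\hmax$, while the one removed has height $\abs{\hmin}=r_E$. That the remaining primitive $T$-cones are unaffected follows because mutation acts on $\dual{P}$ as a piecewise-$\GL_2(\Z)$ map: $\GL_2(\Z)$ preserves the lattice distance of an edge from the origin, and the map is the identity on the common boundary $w^\perp$, so every edge whose inner normal lies in either half-space keeps its height, and only the bottom edge $E$ and the top edge take part in the combinatorial change.

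Finally I would assemble these pieces. By Lemma~\ref{lem:mutation_iff_length} the mutations of $P$ are exactly those taken with respect to an edge $E$ with $k_E\geq r_E=\abs{\hmin}$; combining this with the computation above, $P$ is minimal if and only if $\abs{\hmin}\leq\hmax$ holds for each such $w_E$, which is the claimed characterisation. The only genuine obstacle is the bookkeeping in the middle step: verifying that a single mutation changes the multiset of $T$-cone heights by exactly the swap $\abs{\hmin}\leftrightarrow\hmax$, for which the symmetry argument via $\mut_{-w}$ is the cleanest device. Everything else is a direct translation through the two cited lemmas.
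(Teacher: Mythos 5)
Your argument is correct and follows exactly the route the paper intends: the corollary is stated there as an immediate consequence of Lemma~\ref{lem:characterisation}~\eqref{item:characterisation_heights} together with the observation (made after Definition~\ref{defn:sing_content_polygon}, via Lemma~\ref{lem:mutation_iff_length}) that a mutation swaps one primitive $T$-cone of height $\abs{\hmin}$ for one of height $\hmax$ and leaves everything else unchanged. Your extra care in verifying the height of the newly created $T$-cone via $P=\mut_{-w}(Q,F)$ simply makes explicit what the paper leaves implicit.
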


\begin{example}\label{eg:centrally_symmetric}
Any centrally symmetric polygon (that is, any polygon $P$ satisfying $v\in P$ if and only if $-v\in P$) is minimal: for any primitive inner normal vector $w\in M$, $\abs{\hmin}=\hmax$.
\end{example}

\begin{cor}\label{cor:triangle_characterisation}
Let $P:=\sconv{v_0,v_1,v_2}$ be a Fano triangle with residual basket $\cB=\emptyset$. Then $P$ is minimal if and only if $v_0+v_1+v_2\in P$.
\end{cor}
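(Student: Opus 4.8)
The plan is to deduce this from Corollary~\ref{cor:minimise_heights}, translating both the minimality condition and the membership condition $v_0+v_1+v_2\in P$ into inequalities between the heights $\rE$ and the maximal heights $\hmax$ of the three edges.

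First I would record that, because $\cB=\emptyset$, every edge of $P$ is a $T$-cone: writing the width of an edge $E$ as $k_E=n_E\rE+k_0$ with $0\le k_0<\rE$, the empty basket forces $k_0=0$ for each edge, so $k_E=n_E\rE$ with $n_E\ge 1$ (since $k_E\ge 1$). In particular $k_E\ge\rE=\abs{\hmin}$ for all three edges, so by Lemma~\ref{lem:mutation_iff_length} every edge admits a mutation and the hypothesis of Corollary~\ref{cor:minimise_heights} is met at each edge. Hence $P$ is minimal if and only if $\abs{\hmin}\le\hmax$ holds for the inner normal of each of the three edges.

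Next I would compute the membership condition. Fix the edge $E=\sconv{v_1,v_2}$ opposite $v_0$, with primitive inner normal $w$, so that $w(v_1)=w(v_2)=\hmin=-\rE$; since $P$ is a triangle and $\hmax>0>-\rE$, the maximum is attained at the remaining vertex, i.e.\ $w(v_0)=\hmax$. A point $p$ lies in $P$ precisely when $w_{E'}(p)\ge-r_{E'}$ for each edge $E'$ (the origin being interior pins down this half-space). Evaluating at $p=v_0+v_1+v_2$ on the edge $E$ gives
$$w(v_0+v_1+v_2)=\hmax-2\rE,$$
and the condition $w(v_0+v_1+v_2)\ge-\rE$ is equivalent to $\hmax\ge\rE=\abs{\hmin}$. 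Running over the three edges, $v_0+v_1+v_2\in P$ if and only if $\abs{\hmin}\le\hmax$ for each edge's inner normal, which by the previous paragraph is exactly minimality.

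I do not anticipate a serious obstacle; the content is the bookkeeping above. The one point needing care is the verification that the hypothesis $k_E\ge\abs{\hmin}$ of Corollary~\ref{cor:minimise_heights} holds for \emph{every} edge, so that minimality is governed by all three inequalities simultaneously — this is precisely where $\cB=\emptyset$ is used, and it is also what makes the clean symmetric criterion $v_0+v_1+v_2\in P$ available. For a triangle with non-empty basket some edge might fail to admit a mutation, and the criterion would have to be restricted to those edges whose cones are $T$-cones.
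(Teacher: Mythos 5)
Your proof is correct and follows essentially the same route as the paper's: both evaluate $w(v_0+v_1+v_2)=2\hmin+\hmax$ edge by edge, identify the half-space condition $w(v_0+v_1+v_2)\geq\hmin$ with $\abs{\hmin}\leq\hmax$, and conclude via Corollary~\ref{cor:minimise_heights}. Your explicit verification that $\cB=\emptyset$ forces $k_E\geq\rE$ on every edge (so that the minimality criterion applies to all three edges simultaneously) is a point the paper leaves implicit, and is a worthwhile addition.
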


\begin{proof}
Set $\overline{v}:=v_0+v_1+v_2\in N$. Let $E$ be an edge of $P$ and let $w\in M$ be the corresponding primitive inner normal vector. Then $w(\overline{v})=w(v_0)+w(v_1)+w(v_2)=2\hmin+\hmax$, and $\abs{\hmin}\leq\hmax$ if and only if $\overline{v}$ lies in the half-space $H_E:=\{v\in\NQ\mid w(v)\geq\hmin\}$. By Corollary~\ref{cor:minimise_heights} we have that $P$ is minimal if and only if $\overline{v}\in\bigcap H_E$, where the intersection is taken over all edges $E$ of $P$. The result follows.
\end{proof}

\section{Minimal Fano polygons with only $T$-singularities}\label{sec:T_singular_polygons}
In Theorem~\ref{thm:T-sing_minimals} below we classify all minimal Fano polygons with residual basket $\cB=\emptyset$. Conjecture~\hyperlink{conj:A}{A} tells us that the mutation-equivalence classes should correspond to the ten qG-deformation classes of smooth del~Pezzo surfaces, and in Theorem~\ref{thm:T-sing_mutation_classes} we find that this is indeed what happens. In particular, in the case when $X_P$ is a rank-one toric del~Pezzo surface with only $T$-singularities we recover the results of Hacking--Prokhorov~\cite[Theorem~4.1]{HP10}. Perling has also studied these surfaces from the viewpoint of mutations of exceptional collections~\cite{P13}.

\begin{definition}
Let $P\subset\NQ$ be a Fano polygon with vertex set denoted by $\V{P}$ and edge set denoted by $\F{P}$. For each edge $E\in\F{P}$ of $P$ the unique primitive lattice point in the dual lattice $M:=\Hom{N,\Z}$ defining an inner normal of $E$ is denoted by $w_E$. The integer $\rE:=-w_E(E)$, equal to the height of $E$ above the origin $\orig$, is called the \emph{local index} of $E$ (or the \emph{Gorenstein index} of the singularity associated with $\cone{E}$). The \emph{maximum local index} is the maximum local index of all edges of $P$:
$$\mP:=\max{\rE\mid E\in\F{P}}.$$
\end{definition}

\begin{lemma}\label{lem:triangle_degree}
Let $P=\sconv{v_0,v_1,v_2}\subset\NQ$ be a Fano triangle, and let $(\lambda_0,\lambda_1,\lambda_2)\in\Z^3_{>0}$ be pairwise coprime weights such that $\lambda_0v_0+\lambda_1v_1+\lambda_2v_2=\orig$. Then:
$$\Vol{P}\cdot\Vol{\dual{P}}=\frac{(\lambda_0+\lambda_1+\lambda_2)^3}{\lambda_0\lambda_1\lambda_2}.$$
\end{lemma}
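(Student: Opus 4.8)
The plan is to compute both volumes in the rational basis $\{v_0,v_1\}$ of $\NQ$ and to exploit the duality between lattice indices, so that the (a priori unknown) index of $\langle v_0,v_1\rangle$ in $N$ cancels in the product. The key point to keep in mind is that the vertices need not generate $N$; in general $v_2\notin\langle v_0,v_1\rangle$, so $\{v_0,v_1\}$ is only a $\Q$-basis, not a lattice basis.

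\textbf{Setting up the volume of $P$.} Since $\orig\in\intr{P}$, I would decompose $P$ into the three triangles $\sconv{\orig,v_i,v_j}$, giving $\Vol{P}=\abs{\det(v_1,v_2)}+\abs{\det(v_2,v_0)}+\abs{\det(v_0,v_1)}$ (all three determinants have the same sign). Wedging the relation $\lambda_0v_0+\lambda_1v_1+\lambda_2v_2=\orig$ against each $v_i$ shows these determinants are in the ratio $\lambda_0:\lambda_1:\lambda_2$. Concretely, using $\{v_0,v_1\}$ as a $\Q$-basis one has $v_2=(-\lambda_0/\lambda_2,-\lambda_1/\lambda_2)$, and a direct determinant computation gives the normalised volume of $P$ \emph{with respect to the lattice $\langle v_0,v_1\rangle$} as $(\lambda_0+\lambda_1+\lambda_2)/\lambda_2$.

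\textbf{Setting up the volume of $\dual P$.} Since each $\lambda_i>0$, every edge of $P$ contributes a facet of $\dual P$, so $\dual P$ is the triangle $\sconv{u_{01},u_{12},u_{20}}$, where $u_{ij}$ is the unique point with $u_{ij}(v_i)=u_{ij}(v_j)=-1$. Writing a functional $u$ in the dual basis and using $u(v_2)=-(\lambda_0 u(v_0)+\lambda_1 u(v_1))/\lambda_2$, I would read off the three vertices explicitly (e.g. $u_{01}=(-1,-1)$, and the other two of the form $(-1,(\lambda_0+\lambda_2)/\lambda_1)$ and $((\lambda_1+\lambda_2)/\lambda_0,-1)$). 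A single $2\times 2$ determinant then yields the normalised volume of $\dual P$ \emph{with respect to the dual lattice $\langle v_0,v_1\rangle^*$} as $(\lambda_0+\lambda_1+\lambda_2)^2/(\lambda_0\lambda_1)$.

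\textbf{Cancelling the index.} Finally I would pass to the genuine lattices $N$ and $M=\dual N$. Normalised volume (being twice the Euclidean area divided by the covolume) scales by the lattice index: with $j:=[N:\langle v_0,v_1\rangle]$ one gets $\Vol{P}=j\,(\lambda_0+\lambda_1+\lambda_2)/\lambda_2$. Dually $M\subseteq\langle v_0,v_1\rangle^*$ has the \emph{same} index $j$ (the index of dual lattices is reversed), so $\Vol{\dual P}=j^{-1}(\lambda_0+\lambda_1+\lambda_2)^2/(\lambda_0\lambda_1)$. Multiplying, the factor $j$ cancels and leaves $(\lambda_0+\lambda_1+\lambda_2)^3/(\lambda_0\lambda_1\lambda_2)$, as claimed. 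The main obstacle is precisely this index bookkeeping: one must not assume the vertices form a lattice basis, and the proof hinges on the identity $[N:\langle v_0,v_1\rangle]=[\langle v_0,v_1\rangle^*:M]$ to make the index disappear; everything else is a routine determinant computation. (As a sanity check, when the vertices do generate $N$ one recovers the weighted projective plane $\Proj(\lambda_0^2,\lambda_1^2,\lambda_2^2)$ with $\Vol{\dual P}=(-K)^2=(\lambda_0+\lambda_1+\lambda_2)^2/(\lambda_0\lambda_1\lambda_2)$.)
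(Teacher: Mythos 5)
Your proof is correct, and the central mechanism is the same one the paper uses: compute both volumes relative to an auxiliary sublattice and let the index cancel in the product via $[N:N']=[\dual{(N')}:M]$. The execution differs, though. The paper takes $N'$ to be the sublattice generated by \emph{all three} vertices, identifies $P$ restricted to $N'$ with the standard triangle $Q$ of $\Proj(\lambda_0,\lambda_1,\lambda_2)$, and simply quotes $\Vol{Q}=\lambda_0+\lambda_1+\lambda_2$ and $\Vol{\dual{Q}}=(\lambda_0+\lambda_1+\lambda_2)^2/\lambda_0\lambda_1\lambda_2$ (the degree of the weighted projective plane), with the multiplicity $\mult{P}=[N:N']$ playing the role of your $j$. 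You instead work in the $\Q$-basis $\{v_0,v_1\}$, so your intermediate quantities $(\lambda_0+\lambda_1+\lambda_2)/\lambda_2$ and $(\lambda_0+\lambda_1+\lambda_2)^2/\lambda_0\lambda_1$ are not symmetric in the weights and differ from the paper's, but the asymmetry cancels in the product just as the index does. What your route buys is self-containedness: everything is an explicit $2\times2$ determinant, with no appeal to the fake-weighted-projective-space dictionary or to the degree formula for $\Proj(\lambda_0,\lambda_1,\lambda_2)$; what the paper's route buys is brevity and the conceptual identification $\Vol{\dual{P}}=(-K_{X_P})^2$. Your computations check out (I verified the vertex coordinates of $\dual{P}$ and both normalised volumes), and the direction of the index scaling is handled correctly in both $N$ and $M$. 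One small slip in your closing sanity check: when the vertices generate $N$ the surface is $\Proj(\lambda_0,\lambda_1,\lambda_2)$, not $\Proj(\lambda_0^2,\lambda_1^2,\lambda_2^2)$ --- the squared weights in the paper's Example~\ref{eg:mutations_of_P2} refer to Markov triples, which are a different parametrisation; the degree formula you then quote is the correct one for $\Proj(\lambda_0,\lambda_1,\lambda_2)$, so this does not affect the argument.
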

\begin{proof}
Recall~\cite{Kas08b} that the Fano triangle $P$ corresponds to some fake weighted projective space $X_P=\Proj(\lambda_0,\lambda_1,\lambda_2)/G$, where $G=N/N'$ is the quotient of $N$ by the sublattice $N'=v_0\cdot\Z+v_1\cdot\Z+v_2\cdot\Z$ generated by the vertices of $P$. The order $\abs{G}$, or equivalently the index $[N:N']$ of the sublattice $N'$, is called the \emph{multiplicity} of $P$, and denoted by $\mult{P}$. Let $Q$ be the Fano triangle associated with weighted projective space $X_Q=\Proj(\lambda_0,\lambda_1,\lambda_2)$. Then
$$\Vol{Q}=\lambda_0+\lambda_1+\lambda_2\quad\text{ and }\quad
\Vol{\dual{Q}}=\frac{(\lambda_0+\lambda_1+\lambda_2)^2}{\lambda_0\lambda_1\lambda_2},$$
where the second value is simply the degree of the weighted projective space $X_Q$. But $Q$ is $\GL_2(\Z)$-equivalent to the restriction of $P$ to $N'$. Hence $\Vol{P}=\mult{P}\cdot\Vol{Q}$ and:
$$\Vol{\dual{P}}=\frac{1}{\mult{P}}\cdot\Vol{\dual{Q}}=\frac{(\lambda_0+\lambda_1+\lambda_2)^3}{\lambda_0\lambda_1\lambda_2}\cdot\frac{1}{\Vol{P}}.$$
\end{proof}

\begin{lemma}[\protect{\cite[Lemma~2.1]{KKN08}}]\label{lem:edge_projection}
Let $P\subset\NQ$ be a Fano polygon and let $E$ be an edge with local index equal to the maximum local index of $P$, $\rE=\mP$. Assume that there exists a non-vertex lattice point $v\in\cone{E}$ with $w_E(v)=-1$. For every lattice point $v'\in P\cap N\setminus E$ we have that $v+v'\in P\cap N$. In particular, if $E'$ is an edge of $P$ that is not parallel to $v$, then $|E' \cap N|\le |E \cap N|$.
\end{lemma}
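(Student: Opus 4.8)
The plan is to work with the half-space description $P=\bigcap_{E''\in\F{P}}H_{E''}$, where $H_{E''}:=\{u\in\NQ\mid w_{E''}(u)\geq -r_{E''}\}$ and $r_{E''}$ is the local index of $E''$, and to check the membership $v+v'\in P$ edge by edge. Write the (primitive, hence linearly independent since $\orig\in\intr{P}$) vertices of $E$ as $\rho_0,\rho_1$, so that $v=\alpha\rho_0+\beta\rho_1$ for unique $\alpha,\beta\geq 0$. From $w_E(\rho_i)=-\rE$ and $w_E(v)=-1$ we read off $\alpha+\beta=1/\rE$; and if $v$ lay on a ray of $\cone{E}$ it would be a positive lattice multiple of a primitive $\rho_i$, forcing $\rE=1$ and $v=\rho_i\in\V{P}$, contrary to $v$ being a non-vertex. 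Hence $\alpha,\beta>0$.

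The key step is the sub-claim that $w_{E''}(v)\geq 0$ for every edge $E''\neq E$. Since $\rho_0,\rho_1\in P$ we have $w_{E''}(\rho_i)\geq -r_{E''}$, whence
$$w_{E''}(v)=\alpha w_{E''}(\rho_0)+\beta w_{E''}(\rho_1)\geq -r_{E''}(\alpha+\beta)=-\frac{r_{E''}}{\rE}\geq -1,$$
the last inequality holding because $\rE=\mP$ is the maximum local index. As $w_{E''}(v)\in\Z$, this forces $w_{E''}(v)\geq 0$ unless every inequality is an equality; but equality requires $r_{E''}=\rE$ and (using $\alpha,\beta>0$) $w_{E''}(\rho_0)=w_{E''}(\rho_1)=-r_{E''}$, i.e. both endpoints of $E$ lie on the minimal face $E''$, forcing $E''=E$. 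I expect this equality analysis — pinning down $E$ as the only edge on which $w_{E''}(v)$ can be negative — to be the crux of the argument. Granting it, for $v'\in P\cap N\setminus E$ and $E''\neq E$ we get $w_{E''}(v+v')\geq 0-r_{E''}=-r_{E''}$; and for $E''=E$ we instead use that $v'\notin E$ forces $w_E(v')\geq -\rE+1$ by integrality, so $w_E(v+v')=-1+w_E(v')\geq -\rE$. Thus $v+v'$ satisfies every defining inequality of $P$, and being a lattice point lies in $P\cap N$.

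For the final statement I would introduce the downward slide $\phi(u):=u+(w_E(u)+\rE)v$. Iterating the main claim shows that for $u\in(E'\cap N)\setminus E$ each intermediate point $u+jv$ lies in $P\cap N$ and, until the final step, stays off $E$ (its $w_E$-value still exceeds $-\rE$), so that $\phi(u)$ sits at height $-\rE$ and hence on the minimal $w_E$-slice, which is $E$; thus $\phi$ maps $(E'\cap N)\setminus E$ into $E\cap N$. Injectivity is precisely where the hypothesis that $E'$ is not parallel to $v$ is used: $\phi(u)=\phi(u')$ gives $u-u'\in\Q v$, while $u-u'$ is parallel to $E'$, so $u=u'$. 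If $E'$ is not adjacent to $E$ then $(E'\cap N)\setminus E=E'\cap N$ and we conclude at once; if $E'$ meets $E$ in a single vertex $p$, the same non-parallelism shows $p$ is not in the image of $\phi$ (any preimage would lie on the line through $p$ in direction $v$, which meets $E'$ only at $p$), so $\phi$ embeds $(E'\cap N)\setminus\{p\}$ into $(E\cap N)\setminus\{p\}$. Either way $\abs{E'\cap N}\leq\abs{E\cap N}$, as required.
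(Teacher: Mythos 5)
Your proof is correct. The paper offers no argument of its own for this lemma --- it is quoted from \cite[Lemma~2.1]{KKN08} --- and your argument (writing $v=\alpha\rho_0+\beta\rho_1$ with $\alpha,\beta>0$ and $\alpha+\beta=1/\rE$, using convexity and the maximality of $\rE$ to force $w_{E''}(v)\ge 0$ for every edge $E''\ne E$, verifying the defining inequalities of $P$ edge by edge, and then sliding $(E'\cap N)\setminus E$ injectively onto $E\cap N$ along $v$) is precisely the standard proof of that cited result, so there is nothing to add.
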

\begin{table}[htdp]
\centering
\caption{The minimal Fano triangles $P\subset\NQ$ with singularity content $(n,\emptyset)$, vertices $\V{P}$, and weights $(\lambda_0,\lambda_1,\lambda_2)$, up to the action of $\GL_2(\Z)$. The corresponding toric varieties $X_P$ and degrees $(-K_{X_P})^2=12-n$ are also given. See also Figure~\ref{fig:triangle_smoothable}.}
\label{tab:triangle_smoothable}
\begin{tabular}{rlcccccc}
\toprule
\multicolumn{1}{c}{$P$}&\multicolumn{1}{c}{$\V{P}$}&$\lambda_0$&$\lambda_1$&$\lambda_2$&$X_P$&$n$&$(-K_{X_P})^2$\\
\cmidrule(lr){1-1} \cmidrule(lr){2-2} \cmidrule(lr){3-5} \cmidrule(lr){6-6} \cmidrule(lr){7-8}
\oddrow \padding \hyperlink{fig:RpolyP}{$\RpolyP$}&\gap $(1,0),(0,1),(-1,-1)$&$1$&$1$&$1$&$\Proj^2$&$3$&$9$\\
\oddrow \padding \hyperlink{fig:RpolyL}{$\RpolyL$}&\gap $(1,1),(-2,1),(1,-2)$&$1$&$1$&$1$&$\Proj^2/(\Z/3)$&$9$&$3$\\
\oddrow \padding \hyperlink{fig:TpolyH}{$\TpolyH$}&\gap $(1,3),(-2,3),(1,-6)$&$1$&$1$&$1$&$\Proj^2/(\Z/9)$&$11$&$1$\\
\cmidrule(lr){1-1} \cmidrule(lr){2-2} \cmidrule(lr){3-5} \cmidrule(lr){6-6} \cmidrule(lr){7-8}
\evnrow \padding \hyperlink{fig:RpolyO}{$\RpolyO$}&\gap $(1,1),(-1,1),(0,-1)$&$1$&$1$&$2$&$\Proj(1,1,2)$&$4$&$8$\\
\evnrow \padding \hyperlink{fig:RpolyM}{$\RpolyM$}&\gap $(1,1),(-1,1),(1,-3)$&$1$&$1$&$2$&$\Proj(1,1,2)/(\Z/2)$&$8$&$4$\\
\evnrow \padding \hyperlink{fig:TpolyE}{$\TpolyE$}&\gap $(1,2),(-1,2),(1,-6)$&$1$&$1$&$2$&$\Proj(1,1,2)/(\Z/4)$&$10$&$2$\\
\evnrow \padding \hyperlink{fig:TpolyA}{$\TpolyA$}&\gap $(3,2),(-1,2),(-1,-2)$&$1$&$1$&$2$&$\Proj(1,1,2)/(\Z/4)$&$10$&$2$\\
\evnrow \padding \hyperlink{fig:TpolyG}{$\TpolyG$}&\gap $(1,4),(-3,4),(1,-4)$&$1$&$1$&$2$&$\Proj(1,1,2)/(\Z/8)$&$11$&$1$\\
\cmidrule(lr){1-1} \cmidrule(lr){2-2} \cmidrule(lr){3-5} \cmidrule(lr){6-6} \cmidrule(lr){7-8}
\oddrow \padding \hyperlink{fig:RpolyN}{$\RpolyN$}&\gap $(1,1),(-1,1),(1,-2)$&$1$&$2$&$3$&$\Proj(1,2,3)$&$6$&$6$\\
\oddrow \padding \hyperlink{fig:TpolyD}{$\TpolyD$}&\gap $(1,2),(-1,2),(1,-4)$&$1$&$2$&$3$&$\Proj(1,2,3)/(\Z/2)$&$9$&$3$\\
\oddrow \padding \hyperlink{fig:TpolyI}{$\TpolyI$}&\gap $(1,3),(-2,3),(1,-3)$&$1$&$2$&$3$&$\Proj(1,2,3)/(\Z/3)$&$10$&$2$\\
\oddrow \padding \hyperlink{fig:TpolyB}{$\TpolyB$}&\gap $(5,3),(-1,3),(-1,-3)$&$1$&$2$&$3$&$\Proj(1,2,3)/(\Z/6)$&$11$&$1$\\
\cmidrule(lr){1-1} \cmidrule(lr){2-2} \cmidrule(lr){3-5} \cmidrule(lr){6-6} \cmidrule(lr){7-8}
\evnrow \padding \hyperlink{fig:TpolyC}{$\TpolyC$}&\gap $(1,2),(-1,2),(1,-3)$&$1$&$4$&$5$&$\Proj(1,4,5)$&$7$&$5$\\
\evnrow \padding \hyperlink{fig:TpolyF}{$\TpolyF$}&\gap $(2,5),(-3,5),(2,-5)$&$1$&$4$&$5$&$\Proj(1,4,5)/(\Z/5)$&$11$&$1$\\
\bottomrule
\end{tabular}
\end{table}
\begin{table}[htdp]
\centering
\caption{The minimal Fano $m$-gons, $m\geq 4$, $P\subset\NQ$ with singularity content $(n,\emptyset)$ and vertices $\V{P}$, up to the action of $\GL_2(\Z)$. The degrees $(-K_{X_P})^2=12-n$ of the corresponding toric varieties are also given. See also Figure~\ref{fig:polygon_smoothable}.}
\label{tab:polygon_smoothable}
\begin{tabular}{rlcccccc}
\toprule
\multicolumn{1}{c}{$P$}&\multicolumn{1}{c}{$\V{P}$}&$n$&$(-K_{X_P})^2$\\
\cmidrule(lr){1-1} \cmidrule(lr){2-2} \cmidrule(lr){3-4}
\oddrow \padding \hyperlink{fig:PpolyJ}{$\PpolyJ$}&\gap $(1,2),(-1,2),(-1,-2),(1,-2)$&10&2\\
\oddrow \padding \hyperlink{fig:PpolyG}{$\PpolyG$}&\gap $(1,2),(-1,2),(-1,-1),(1,-3)$&10&2\\
\oddrow \padding \hyperlink{fig:PpolyH}{$\PpolyH$}&\gap $(1,2),(-1,2),(-1,0),(1,-4)$&10&2\\
\oddrow \padding \hyperlink{fig:PpolyI}{$\PpolyI$}&\gap $(1,2),(-1,2),(-1,1),(1,-5)$&10&2\\
\cmidrule(lr){1-1} \cmidrule(lr){2-2} \cmidrule(lr){3-4}
\evnrow \padding \hyperlink{fig:PpolyB}{$\PpolyB$}&\gap $(1,2),(-1,2),(-1,0),(1,-2)$&9&3\\
\evnrow \padding \hyperlink{fig:PpolyE}{$\PpolyE$}&\gap $(1,2),(-1,2),(-1,1),(1,-3)$&9&3\\
\cmidrule(lr){1-1} \cmidrule(lr){2-2} \cmidrule(lr){3-4}
\oddrow \padding \hyperlink{fig:RpolyE}{$\RpolyE$}&\gap $(1,1),(-1,1),(-1,0),(1,-2)$&8&4\\
\oddrow \padding \hyperlink{fig:RpolyF}{$\RpolyF$}&\gap $(1,1),(-1,1),(-1,-1),(1,-1)$&8&4\\
\oddrow \padding \hyperlink{fig:PpolyF}{$\PpolyF$}&\gap $(1,2),(-1,2),(0,-1),(1,-3)$&8&4\\
\oddrow \padding \hyperlink{fig:PpolyD}{$\PpolyD$}&\gap $(1,2),(-1,2),(-1,1),(0,-1),(1,-2)$&8&4\\
\cmidrule(lr){1-1} \cmidrule(lr){2-2} \cmidrule(lr){3-4}
\evnrow \padding \hyperlink{fig:RpolyG}{$\RpolyG$}&\gap $(1,1),(-1,1),(0,-1),(1,-2)$&7&5\\
\evnrow \padding \hyperlink{fig:PpolyA}{$\PpolyA$}&\gap $(1,2),(-1,2),(-1,1),(1,-2)$&7&5\\
\evnrow \padding \hyperlink{fig:PpolyC}{$\PpolyC$}&\gap $(1,2),(-1,2),(0,-1),(1,-2)$&7&5\\
\evnrow \padding \hyperlink{fig:RpolyB}{$\RpolyB$}&\gap $(1,1),(-1,1),(-1,0),(0,-1),(1,-1)$&7&5\\
\cmidrule(lr){1-1} \cmidrule(lr){2-2} \cmidrule(lr){3-4}
\oddrow \padding \hyperlink{fig:RpolyH}{$\RpolyH$}&\gap $(1,1),(-1,1),(-1,0),(1,-1)$&6&6\\
\oddrow \padding \hyperlink{fig:RpolyC}{$\RpolyC$}&\gap $(1,0),(1,1),(-1,1),(-1,0),(0,-1)$&6&6\\
\oddrow \padding \hyperlink{fig:RpolyA}{$\RpolyA$}&\gap $(1,0),(1,1),(0,1),(-1,0),(-1,-1),(0,-1)$&6&6\\
\cmidrule(lr){1-1} \cmidrule(lr){2-2} \cmidrule(lr){3-4}
\evnrow \padding \hyperlink{fig:RpolyI}{$\RpolyI$}&\gap $(1,0),(1,1),(-1,1),(0,-1)$&5&7\\
\evnrow \padding \hyperlink{fig:RpolyD}{$\RpolyD$}&\gap $(1,0),(1,1),(0,1),(-1,-1),(0,-1)$&5&7\\
\cmidrule(lr){1-1} \cmidrule(lr){2-2} \cmidrule(lr){3-4}
\oddrow \padding \hyperlink{fig:RpolyJ}{$\RpolyJ$}&\gap $(1,0),(0,1),(-1,-1),(0,-1)$&4&8\\
\oddrow \padding \hyperlink{fig:RpolyK}{$\RpolyK$}&\gap $(1,0),(0,1),(-1,0),(0,-1)$&4&8\\
\bottomrule
\end{tabular}
\end{table}
\begin{figure}[htdp]
\caption{The minimal Fano triangles $P\subset\NQ$ with singularity content $(n,\emptyset)$, up to the action of $\GL_2(\Z)$. See also Table~\ref{tab:triangle_smoothable}.}
\label{fig:triangle_smoothable}
\centering
\renewcommand{\arraystretch}{0.8}
\vspace{0.5em}
\small
\begin{tabular}{cccccc}
\hypertarget{fig:RpolyP}{}$\RpolyP$&
\hypertarget{fig:RpolyL}{}$\RpolyL$&
\hypertarget{fig:TpolyH}{}$\TpolyH$&
\hypertarget{fig:RpolyO}{}$\RpolyO$&
\hypertarget{fig:RpolyM}{}$\RpolyM$&
\hypertarget{fig:TpolyE}{}$\TpolyE$\\
\raisebox{109pt}{\includegraphics[scale=0.6]{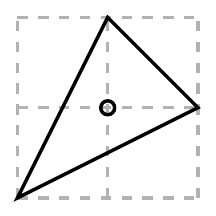}}&
\raisebox{94pt}{\includegraphics[scale=0.6]{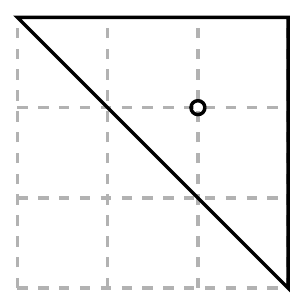}}&
\raisebox{0pt}{\includegraphics[scale=0.6]{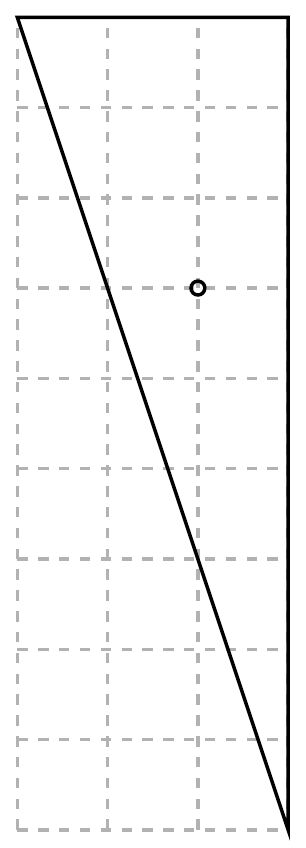}}&
\raisebox{109pt}{\includegraphics[scale=0.6]{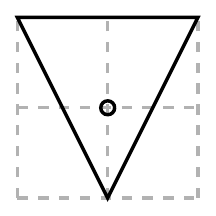}}&
\raisebox{78pt}{\includegraphics[scale=0.6]{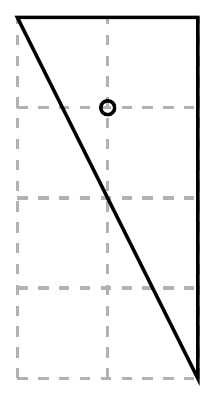}}&
\raisebox{16pt}{\includegraphics[scale=0.6]{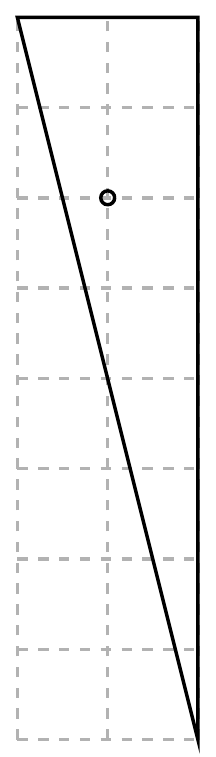}}\\
\end{tabular}
\vgap

\begin{tabular}{ccccc}
\hypertarget{fig:TpolyA}{}$\TpolyA$&
\hypertarget{fig:TpolyG}{}$\TpolyG$&
\hypertarget{fig:RpolyN}{}$\RpolyN$&
\hypertarget{fig:TpolyD}{}$\TpolyD$&
\hypertarget{fig:TpolyI}{}$\TpolyI$\\
\raisebox{62pt}{\includegraphics[scale=0.6]{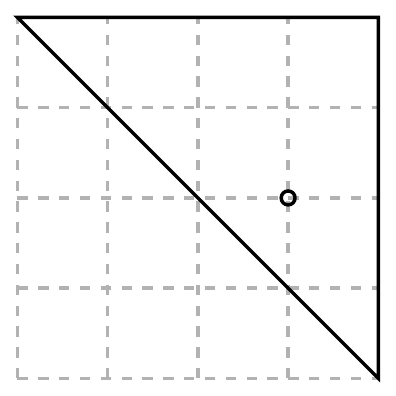}}&
\raisebox{0pt}{\includegraphics[scale=0.6]{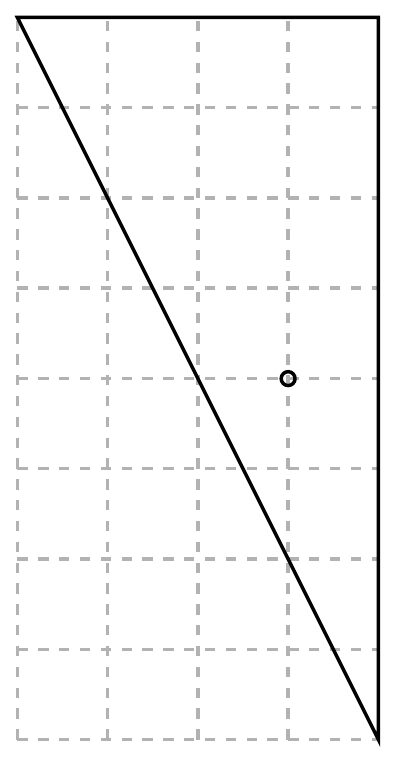}}&
\raisebox{78pt}{\includegraphics[scale=0.6]{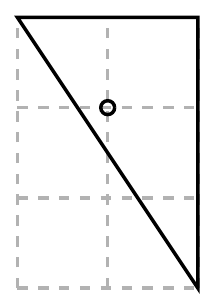}}&
\raisebox{31pt}{\includegraphics[scale=0.6]{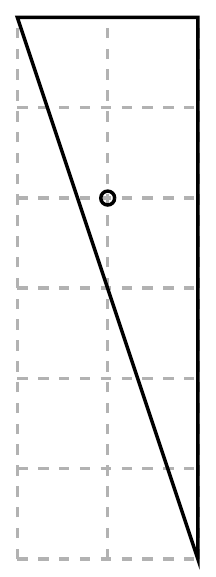}}&
\raisebox{31pt}{\includegraphics[scale=0.6]{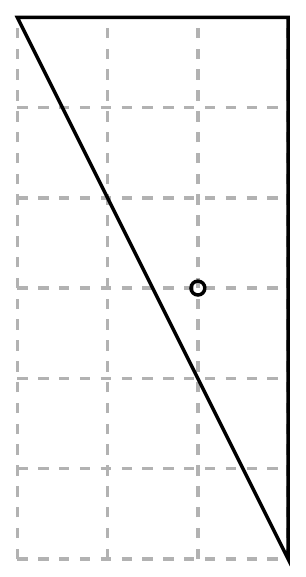}}\\
\end{tabular}
\vgap

\begin{tabular}{ccc}
\hypertarget{fig:TpolyB}{}$\TpolyB$&
\hypertarget{fig:TpolyC}{}$\TpolyC$&
\hypertarget{fig:TpolyF}{}$\TpolyF$\\
\raisebox{62pt}{\includegraphics[scale=0.6]{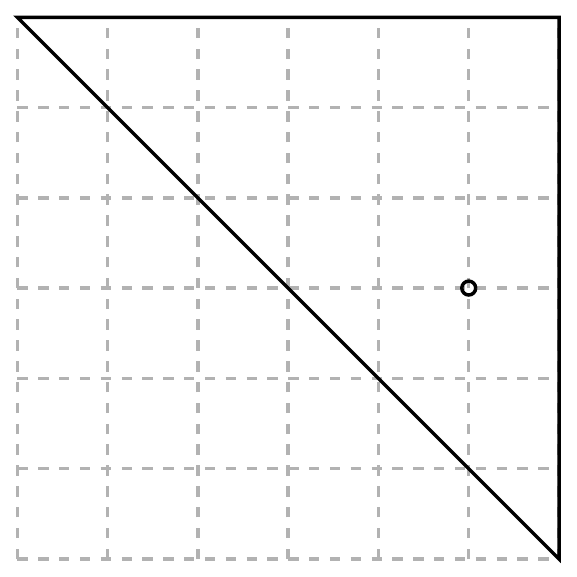}}&
\raisebox{78pt}{\includegraphics[scale=0.6]{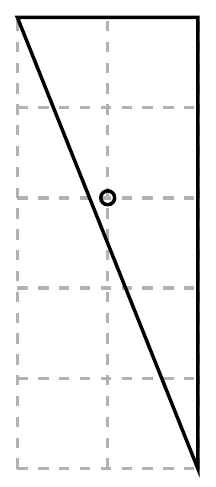}}&
\raisebox{0pt}{\includegraphics[scale=0.6]{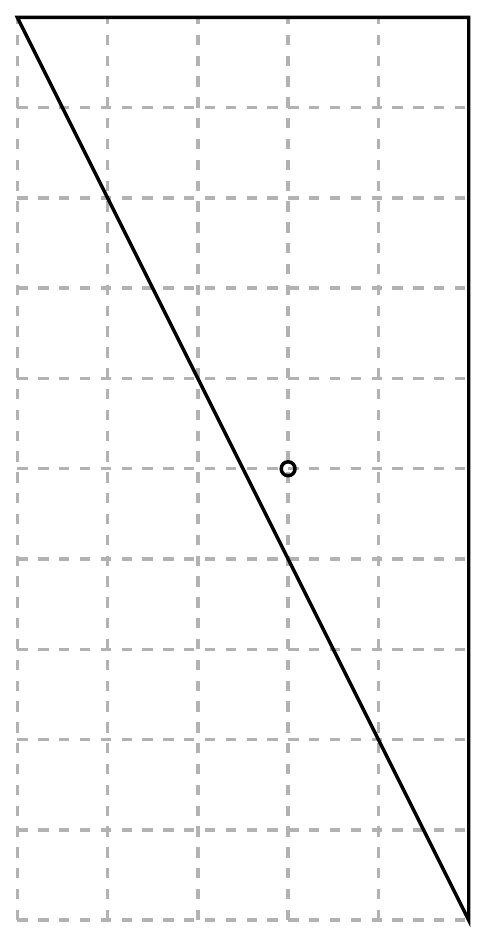}}\\
\end{tabular}
\end{figure}
\begin{figure}[htdp]
\caption{The minimal Fano $m$-gons, $m\geq 4$, $P\subset\NQ$ with singularity content $(n,\emptyset)$, up to the action of $\GL_2(\Z)$. See also Table~\ref{tab:polygon_smoothable}.}
\label{fig:polygon_smoothable}
\centering
\vspace{0.5em}
\renewcommand{\arraystretch}{0.8}
\small
\begin{tabular}{cccccccc}
\hypertarget{fig:PpolyJ}{}$\PpolyJ$&
\hypertarget{fig:PpolyG}{}$\PpolyG$&
\hypertarget{fig:PpolyH}{}$\PpolyH$&
\hypertarget{fig:PpolyI}{}$\PpolyI$&
\hypertarget{fig:PpolyB}{}$\PpolyB$&
\hypertarget{fig:PpolyE}{}$\PpolyE$&
\hypertarget{fig:RpolyE}{}$\RpolyE$&
\hypertarget{fig:RpolyF}{}$\RpolyF$\\
\raisebox{47pt}{\includegraphics[scale=0.6]{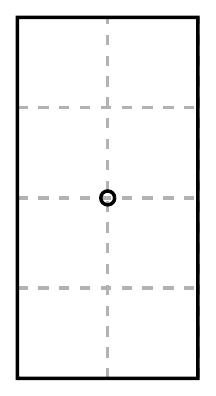}}&
\raisebox{31pt}{\includegraphics[scale=0.6]{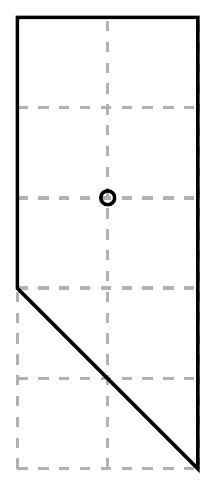}}&
\raisebox{16pt}{\includegraphics[scale=0.6]{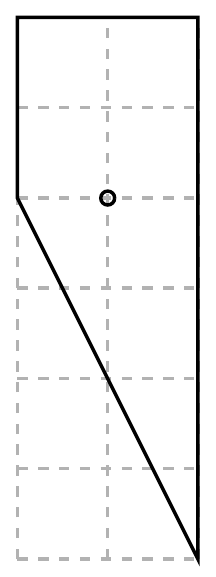}}&
\raisebox{0pt}{\includegraphics[scale=0.6]{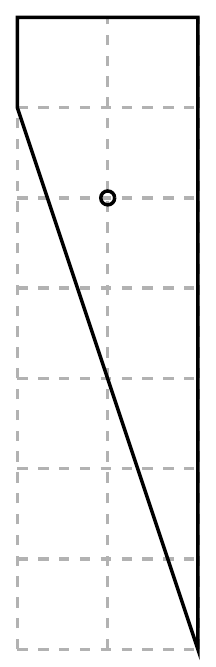}}&
\raisebox{47pt}{\includegraphics[scale=0.6]{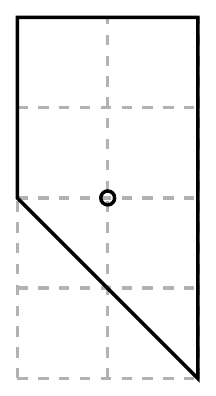}}&
\raisebox{31pt}{\includegraphics[scale=0.6]{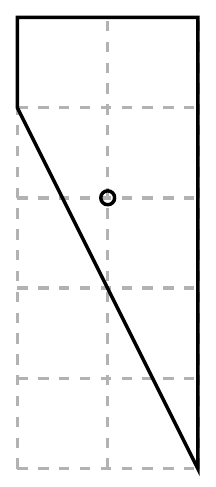}}&
\raisebox{62pt}{\includegraphics[scale=0.6]{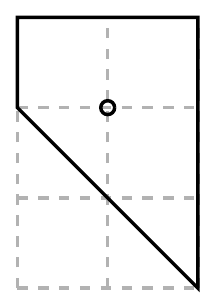}}&
\raisebox{78pt}{\includegraphics[scale=0.6]{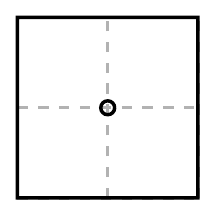}}\\
\end{tabular}
\vgap

\begin{tabular}{cccccccc}
\hypertarget{fig:PpolyF}{}$\PpolyF$&
\hypertarget{fig:PpolyD}{}$\PpolyD$&
\hypertarget{fig:RpolyG}{}$\RpolyG$&
\hypertarget{fig:PpolyA}{}$\PpolyA$&
\hypertarget{fig:PpolyC}{}$\PpolyC$&
\hypertarget{fig:RpolyB}{}$\RpolyB$&
\hypertarget{fig:RpolyH}{}$\RpolyH$&
\hypertarget{fig:RpolyC}{}$\RpolyC$\\
\raisebox{0pt}{\includegraphics[scale=0.6]{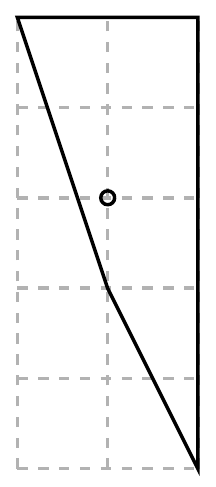}}&
\raisebox{16pt}{\includegraphics[scale=0.6]{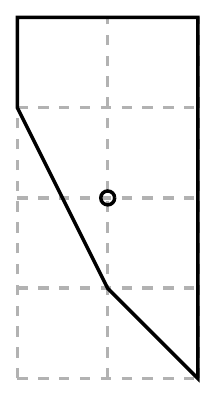}}&
\raisebox{31pt}{\includegraphics[scale=0.6]{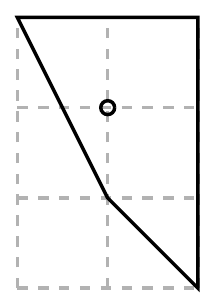}}&
\raisebox{16pt}{\includegraphics[scale=0.6]{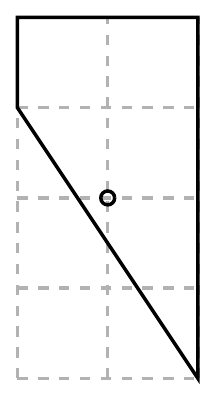}}&
\raisebox{16pt}{\includegraphics[scale=0.6]{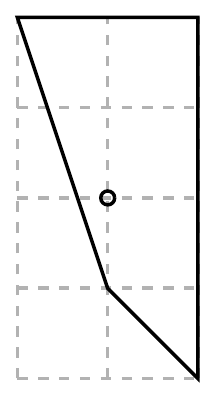}}&
\raisebox{47pt}{\includegraphics[scale=0.6]{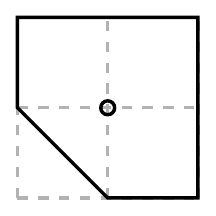}}&
\raisebox{47pt}{\includegraphics[scale=0.6]{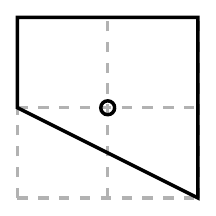}}&
\raisebox{47pt}{\includegraphics[scale=0.6]{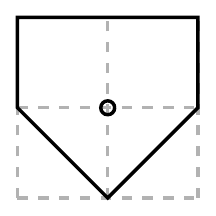}}\\
\end{tabular}
\vgap

\begin{tabular}{ccccc}
\hypertarget{fig:RpolyA}{}$\RpolyA$&
\hypertarget{fig:RpolyI}{}$\RpolyI$&
\hypertarget{fig:RpolyD}{}$\RpolyD$&
\hypertarget{fig:RpolyJ}{}$\RpolyJ$&
\hypertarget{fig:RpolyK}{}$\RpolyK$\\
\raisebox{0pt}{\includegraphics[scale=0.6]{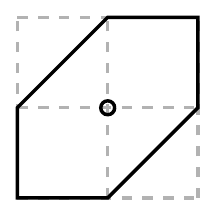}}&
\raisebox{0pt}{\includegraphics[scale=0.6]{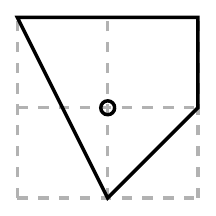}}&
\raisebox{0pt}{\includegraphics[scale=0.6]{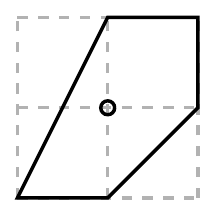}}&
\raisebox{0pt}{\includegraphics[scale=0.6]{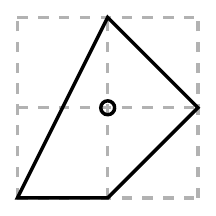}}&
\raisebox{0pt}{\includegraphics[scale=0.6]{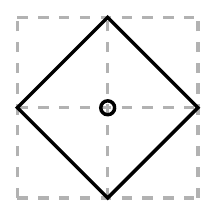}}\\
\end{tabular}
\end{figure}
\begin{thm}\label{thm:T-sing_minimals}
Let $P\subset\NQ$ be a minimal Fano polygon with residual basket $\cB=\emptyset$. Then, up to $\GL_2(\Z)$-equivalence, $P$ is one of the following $35$ polygons:
\begin{enumerate}
\item\label{item:T-sing_minimals_reflexive_triangle}
the five reflexive triangles $\RpolyP,\ldots,\RpolyN$ in Table~\ref{tab:triangle_smoothable};
\item\label{item:T-sing_minimals_reflexive_exceptional}
the eleven reflexive polygons $\RpolyE,\ldots,\RpolyK$ in Table~\ref{tab:polygon_smoothable};
\item\label{item:T-sing_minimals_triangle}
the nine non-reflexive triangles $\TpolyH,\ldots,\TpolyF$ in Table~\ref{tab:triangle_smoothable};
\item\label{item:T-sing_minimals_exceptional}
the ten non-reflexive polygons $\PpolyJ,\ldots,\PpolyC$ in Table~\ref{tab:polygon_smoothable}.
\end{enumerate}
\end{thm}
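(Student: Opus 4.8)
The plan is to extract strong numerical constraints from the hypotheses, reduce to a finite search, and then organise the output according to whether $P$ is a triangle or has at least four edges. Since $\cB=\emptyset$, every edge $E$ of $P$ is a $T$-cone, so its width is a positive multiple of its height $\rE$; in particular $k_E\geq\rE=\abs{\hmin}$, and Corollary~\ref{cor:minimise_heights} then says that minimality is equivalent to $\rE\leq\hmax$ for \emph{every} edge $E$. By Proposition~\ref{prop:sing_content_degree} the degree is $(-K_{X_P})^2=12-n$, and since $X_P$ is del~Pezzo this is positive, so $n\leq 11$; the bound~\eqref{eq:bound_on_rank} gives $\abs{\V{P}}\leq n\leq 11$. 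Thus $P$ has between three and eleven vertices, and by~\eqref{eq:minimal_bdry} and~\eqref{eq:minimal_vol} we have $\abs{\bdry{P}\cap N}=r_1+\cdots+r_n$ and $\Vol{P}=r_1^2+\cdots+r_n^2$, where $r_1,\ldots,r_n$ are the heights of the primitive $T$-cones.

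The crux is to bound the maximum local index $\mP=\max{\rE\mid E\in\F{P}}$. Let $E$ attain $\rE=\mP$. Minimality gives $\hmax\geq\mP$ in the direction $w_E$, so the lattice points opposite $E$ lie at height at least $\mP$ while $E$ lies at height $-\mP$. I would apply Lemma~\ref{lem:edge_projection}: provided $\cone{E}$ contains a non-vertex lattice point $v$ with $w_E(v)=-1$ (which holds once $E$ is wide enough relative to its height, the remaining narrow cases being checked directly), every lattice point of $P$ outside $E$ translates by $v$ back into $P$, and no edge non-parallel to $v$ is wider than $E$. This rigidity, combined with $\abs{\V{P}}\leq 11$, the minimality inequality $\hmax\geq\mP$, and the positivity of $(-K_{X_P})^2=\Vol{\dual{P}}=12-n$, pins $\mP$ to an explicit bound (realised as $\mP\leq 9$ in the classification). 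I expect this to be the main obstacle, as it is precisely where minimality must be played against the degree constraint to exclude arbitrarily tall, thin polygons.

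With $\mP$ and $\abs{\V{P}}$ both bounded, all heights $r_i\leq\mP$ are bounded, so $\abs{\bdry{P}\cap N}$ and $\Vol{P}$ are bounded; by the finiteness results of~\cite{LZ91,Sco76} there are then only finitely many candidates up to $\GL_2(\Z)$-equivalence, and I would finish by a finite (computer-assisted) enumeration, testing each candidate for minimality via Corollary~\ref{cor:minimise_heights}. For Fano triangles $P=\sconv{v_0,v_1,v_2}$, Corollary~\ref{cor:triangle_characterisation} reduces minimality to $v_0+v_1+v_2\in P$; writing $X_P=\Proj(\lambda_0,\lambda_1,\lambda_2)/G$ and using Lemma~\ref{lem:triangle_degree} together with $\Vol{\dual{P}}=12-n$, the requirement $\cB=\emptyset$ forces the weights $(\lambda_0,\lambda_1,\lambda_2)$ into the four families $(1,1,1)$, $(1,1,2)$, $(1,2,3)$, $(1,4,5)$, and the admissible multiplicities $\mult{P}=\abs{G}$ compatible with $\cB=\emptyset$ and degree at least one then yield exactly the fourteen triangles of Table~\ref{tab:triangle_smoothable}.

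Finally, for polygons with $m\geq 4$ edges the same bounds confine the search to a bounded region, and the finite enumeration produces precisely the twenty-one polygons of Table~\ref{tab:polygon_smoothable}. Together with the fourteen triangles this gives the $35$ polygons claimed, split as five reflexive triangles, nine non-reflexive triangles, eleven reflexive $m$-gons, and ten non-reflexive $m$-gons. The only non-routine ingredients are the bound on $\mP$ and the identification, within each weight family, of which multiplicities preserve $\cB=\emptyset$; everything else is a bounded verification.
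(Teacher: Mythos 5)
Your outline correctly isolates the crux --- bounding the maximum local index $\mP$ --- but it does not prove it: you list the ingredients (Lemma~\ref{lem:edge_projection}, minimality, $\Vol{\dual{P}}=12-n\geq 1$) and then assert that they ``pin $\mP$ to an explicit bound''. That assertion is where essentially all of the work lies, and Lemma~\ref{lem:edge_projection} on its own does not yield it: its conclusion (translates of lattice points land in $P$, and no non-parallel edge is wider than $E$) constrains the shape of $P$ but gives no numerical bound on $\rE$. Note also that $n\leq 11$ together with $\abs{\V{P}}\leq n$ does \emph{not} bound the edge heights --- the mutation tree of $P_{(1,1,1)}$ contains triangles with $n=3$, empty basket, and arbitrarily large $\mP$ --- so the quantitative input must come from minimality, and extracting it is precisely what is missing. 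The paper's route is: (i) apply Lemma~\ref{lem:edge_projection} to the height-$(-1)$ interior points $v_1,\ldots,v_k$ of the maximal $T$-cone to force its width multiple $k\leq 2$; (ii) split according to whether there is an edge parallel to $E$ and whether the far vertex lies on a vertical edge $E'$ carrying $j$ primitive $T$-cones; (iii) in the main case inscribe the Fano triangle $T=\sconv{(b-\rE,\rE),(b,\rE),(b,\rE-jb)}\subset P$, compute $\Vol{\dual{T}}=j/(jb\rE-jb^2-\rE^2)$ via Lemma~\ref{lem:triangle_degree}, and combine $1\leq\Vol{\dual{P}}\leq\Vol{\dual{T}}$ (since $T\subseteq P$ gives $\dual{P}\subseteq\dual{T}$) with $\frac{2}{j}\leq b/\rE\leq\frac{1}{2}$ to obtain $\rE^2\leq j^2/(j-4)$, while $j\leq 9$ because the $j$ primitive $T$-cones on $E'$ count towards $n\leq 11$. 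Only after this does a finite search (Table~\ref{tab:posible_triples_T-sings}) exist to be carried out; without step (iii) or an equivalent inequality your enumeration has no a priori bounding box.

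A secondary gap: for triangles you claim that $\cB=\emptyset$ ``forces the weights into the four families $(1,1,1)$, $(1,1,2)$, $(1,2,3)$, $(1,4,5)$'' as a consequence of Lemma~\ref{lem:triangle_degree} and the degree constraint. Lemma~\ref{lem:triangle_degree} only computes the product $\Vol{P}\cdot\Vol{\dual{P}}$ and places no restriction on the weights; the classification of weight systems of $T$-singular fake weighted projective planes is a separate Markov-type Diophantine result (Hacking--Prokhorov, which the paper \emph{recovers} as a corollary of this theorem rather than using as input). If you import it as a citation the triangle count can indeed be finished, since the multiplicity is then bounded by $(\lambda_0+\lambda_1+\lambda_2)^2/\lambda_0\lambda_1\lambda_2$, but as written this step is also unproved. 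The remaining structure of your argument (reflexive case by the known classification of the sixteen reflexive polygons, finite verification of minimality via Corollary~\ref{cor:minimise_heights}) is sound and matches the paper.
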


\begin{proof}
We consider the cases when $P$ is reflexive, that is $\mP=1$, and when $P$ is not reflexive, that is $\mP>1$, separately.

\proofsection{$\mP=1$}
Every reflexive polygon is minimal since $\abs{\intr{P}\cap N}=1$. The classification of the reflexive polygons is well-known: up to $\GL_2(\Z)$-equivalence there are sixteen polygons~\cite{Bat85,Rab89}, of which five are triangles. This proves~\eqref{item:T-sing_minimals_reflexive_triangle} and~\eqref{item:T-sing_minimals_reflexive_exceptional}.

\proofsection{$\mP>1$}
Let $E\in\F{P}$ be an edge of maximum local index $\rE=\mP>1$. By assumption the width of $E$ is some multiple $k\in\Z_{>0}$ of $\rE$, that is, $\abs{E\cap N}=k\rE+1$. Since the cone $C=\cone{E}$ is a union of $k$ primitive $T$-cones, each of which contains exactly one point at height $-1$ with respect to $w_E$, there exist $k$ distinct points $v_1,\ldots,v_k\in\intr{C}\cap N$ such that $w_E(v_i)=-1$. After suitable change of basis we can insist that $E=\sconv{(-a,\rE),(b,\rE)}$ where $a,b>0$, $a<\rE$, and $a+b=k\rE$. Hence the point $\left((i-1)\rE,\rE\right)$ lies in the strict interior of $E$ and so, after possible reordering, $v_i=(i-1,1)$, for each $i=1,\ldots,k$.

Let $v\in\V{P}$ be a vertex of $P$ such that $w_E(v)=\max{w_E(v')\mid v'\in P}=\hmax$. Since $P$ is minimal by assumption, we have that $w_E(v)\geq\rE=\abs{\hmin}$ (Corollary~\ref{cor:minimise_heights}). By Lemma~\ref{lem:edge_projection} we have that $v$ is contained in the strip $E-v_1\cdot\Z_{>0}$. Hence we can write $v=(\alpha,-\beta)$ for some $\beta\geq\rE$ and $-a\leq\alpha\leq b$. If $k>1$ then we have $v\in E-v_k\cdot\Z_{>0}$ and so
\begin{equation}\label{eq:point_on_bottom_edge}
-a\leq\alpha\leq b-(k-1)(\rE+\beta)\leq b-2(k-1)\rE=-a-(k-2)\rE.
\end{equation}
We conclude that $k\leq 2$.

\proofsection{$k=2$}
Let us first consider the case when $k=2$. By~\eqref{eq:point_on_bottom_edge} $P$ is a triangle given by
$$P=\sconv{(-a,\rE),(-a+2\rE,\rE),(-a,-\rE)},\quad\text{ where }0<a<\rE, \gcd{a,\rE}=1.$$
Let $E'\in\F{P}$ be the edge with vertices $(-a,\rE)$ and $(-a,-\rE)$. Since the corresponding cone is of class $T$, we have that $a\divides 2\rE$, and so $a=1$ or $2$. First consider the case $a=1$. The edge with vertices $(-1,-\rE)$ and $(2\rE-1,\rE)$ is of width $2\rE$ and height $\rE-1$, hence $\rE-1\divides 2\rE$ and so $\rE=2$ or $3$. This gives the two triangles $\TpolyA$ and $\TpolyB$ in Table~\ref{tab:triangle_smoothable}. Similarly consider the case $a=2$. In this case we see that $\rE-2\divides 2\rE$ and so $\rE=3$, $4$, or $6$, however the final two possibilities are excluded because the vertices are non-primitive. This also gives the triangle $\TpolyB$.

\proofsection{$k=1$}
We now consider the case when $k=1$. This is subdivided into two cases depending on whether or not there exists an edge $E'\in\F{P}$ parallel to $E$.

\proofsection{Edge $E'$ parallel to $E$}
Let us assume that there exists a second point of $P$ at height $w_E(v)$; that is, that there exists an edge $E'\in\F{P}$ such that $w_E(E')=w_E(v)$, so that $E$ and $E'$ are parallel. By minimality we see that $r_{E'}=\rE$, and by Lemma~\ref{lem:edge_projection} we have that $\abs{E'\cap N}\leq\abs{E\cap N}=\rE+1$. Recalling that $\cone{E'}$ is a $T$-cone, and hence $r_{E'}\divides\abs{E'\cap N}-1$, we see that $P$ is a rectangle:
$$P=\sconv{(-a,\rE),(-a+\rE,\rE),(-a,-\rE),(-a+\rE,-\rE)}.$$
Since $P$ is minimal we have that $a=2\rE$ and, by primitivity of the vertices, $a=1$, $\rE=2$, giving $\PpolyJ$ in Table~\ref{tab:polygon_smoothable}.

\proofsection{No edge parallel to $E$}
We are now in the situation where $v$ is the unique vertex satisfying $w_E(v)=\max{w_E(v')\mid v'\in P}$. We subdivide this into two cases depending on whether one of the edges $E'\in\F{P}$ with vertex $v$ is vertical.

\proofsection{$E'$ not vertical}
Let us assume that there is no vertical edge adjacent to $v$.  We consider the case $\alpha\ge 0$ (the case $\alpha<0$ being similar). By our assumption, we can choose an edge $E'$ adjacent to $v$ with $w_{E'}=(-\gamma,\delta)\in M$, where $\gamma,\delta\ge 1$. We have that $\rE\ge -w_{E'}(v)=\gamma\alpha+\delta\beta\ge\alpha+\beta\ge\beta\ge\rE$. Hence $(\alpha,-\beta)=(0,-\rE)$, $w_{E'}=(-\gamma,1)$, and $r_{E'}=\rE=\mP$. Exchanging the roles of $E$ and $E'$ we have that $E'$ is of width either $2\rE$, in which case we are in the case $k=2$ above, or of width $\rE$, in which case $\left(\rE,(\gamma-1)\rE\right)$ is a vertex of $P$. This is a contradiction ($\gamma\ne 1$ since $E'$ is not parallel to $E$, but then the vertex fails to be primitive).

\proofsection{$E'$ vertical}
The majority of cases arise when $v$ is contained in a vertical edge $E'\in\F{P}$. This edge necessarily contains one of the two vertices of $E$, and without loss of generality (since $a+b=\rE$) we may assume that $E'=\sconv{v,(b,\rE)}$. Hence $v=(b,\rE-jb)$ for some $j\in\Z_{>0}$. Minimality forces $\rE-jb\le -\rE$, so that
\begin{equation}\label{eq:ineq-b-min}
jb\ge2\rE.
\end{equation}
Moreover, minimality implies
\begin{equation}\label{eq:ineq-b-max}
2b\le\rE.
\end{equation}
In particular we see that $j\ge 4.$

\proofsection{$j=4$}
The case when $j=4$ is different from the cases when $j\geq 5$, and we deal with it now. Equations~\eqref{eq:ineq-b-min} and~\eqref{eq:ineq-b-max} gives that $2b=\rE$, and so by primitivity we have that $\rE=2$ and $b=1$. Hence $P$ is contained in the rectangle $[-1,1]\times [-2,2]$. Notice that the requirement that $\orig\in\intr{P}$ means that $P$ cannot be a triangle. We find $\PpolyB$, $\PpolyD$, $\PpolyA$, and $\PpolyC$ in Table~\ref{tab:polygon_smoothable}.

\proofsection{$j\ge5$}
Consider the triangle
$$T:=\sconv{(b-\rE,\rE),(b,\rE),(b,\rE-jb)}.$$
The edge joining $(b-\rE,\rE)$ and $(b,\rE-jb)$ has first coordinate equal to zero at the point $\big(0,\rE-jb+j\frac{b^2}{\rE}\big)$. Notice that
$$\rE-jb+j\frac{b^2}{\rE}=\left(b-\frac{\rE}{2}\right)\frac{jb}{\rE}+\left(\rE-\frac{jb}{2}\right)\leq 0,$$
where the inequality follows from~\eqref{eq:ineq-b-min} and~\eqref{eq:ineq-b-max}, and that $\rE-jb+j\frac{b^2}{\rE}=0$ implies that
$$b=\frac{\rE}{2}-\rE\sqrt{\frac{1}{4}-\frac{1}{j}}\in\Z,$$
which is impossible, since $j\geq 5$. Hence $\orig\in\intr{T}$ and $T$ is a Fano triangle. We have that $\Vol{T}=jb\rE$. Moreover, one can check that
$$\frac{1}{\gcd{j,\rE}}\left(jb^2,jb\rE-jb^2-\rE^2,\rE^2\right)\in\Z^3_{>0}$$
are pairwise coprime weights satisfying the conditions of Lemma~\ref{lem:triangle_degree}. Hence
$$\Vol{\dual{T}}=\frac{j}{jb\rE-jb^2-\rE^2}.$$
By Proposition~\ref{prop:sing_content_degree}, $\Vol{\dual{P}}\in\Z_{>0}$, and so $1\le\Vol{\dual{P}}\le\Vol{\dual{T}}$. Let $b=\rE q$, where $\frac{2}{j}\leq q\leq\frac{1}{2}$, by~\eqref{eq:ineq-b-min} and~\eqref{eq:ineq-b-max}, so that the lower bound on $\Vol{\dual{T}}$ gives:
\begin{equation}\label{eq:triangle}
\rE^2(-jq^2+jq-1)\le j.
\end{equation}
The quadratic in $q$ on the left-hand-side of~\eqref{eq:triangle} is strictly positive in the range $\frac{2}{j}\leq q\leq\frac{1}{2}$ and obtains its minimum value when $q=\frac{2}{j}$. Hence~\eqref{eq:triangle} gives us:
\begin{equation}\label{eq:triangle_bounds_l}
\rE^2\le\frac{j^2}{j-4}.
\end{equation}

Recall from Proposition~\ref{prop:sing_content_degree} that $\Vol{\dual{P}}=12-n\geq 1$, where $n$ is the total number of primitive $T$-cones spanned by the edges of $P$; equivalently,
$$n=\sum_{F\in\F{P}}\frac{\abs{F\cap N}-1}{r_F}.$$
Since $P$ must have at least three edges, each of which corresponds to a $T$-cone, and by construction we have that the top edge decomposes into a single primitive $T$-cone, and that the right-hand vertical edge $E'$ decomposes into $j\geq 5$ primitive $T$-cones, we conclude that $j\in\{5,\ldots,9\}$. From the inequalities~\eqref{eq:ineq-b-min}, \eqref{eq:ineq-b-max}, \eqref{eq:triangle_bounds_l}, and $\mP=\rE\ge 2$, along with the requirement that $\gcd{b,\rE}=1$, we obtain finitely many possibilities for the triple $(j,\rE,b)$, as recorded in Table~\ref{tab:posible_triples_T-sings}.

\begin{table}[htdp]
\centering
\caption{The possible values of $(j,\rE,b)$.}
\label{tab:posible_triples_T-sings}
\begin{tabular}{rgwgwgwgwgwgw}
\toprule
$j$&5&5&6&6&7&7&8&8&8&9&9&9\\
$\rE$&2&5&2&3&2&3&2&3&4&2&3&4\\
$b$&1&2&1&1&1&1&1&1&1&1&1&1\\
\bottomrule
\end{tabular}
\end{table}

\proofsection{Analysis of Table~\ref{tab:posible_triples_T-sings}}
First we consider the cases where $\rE=2$ and $b=1$. Here $P$ is contained in the rectangle $[-1,1]\times [-j+2,2]$.  Let $E''$ be the edge with $(0,-1)\in\cone{E''}$ that contains the lower right vertex $(1,2-j)\in E''$. If $(0,-1)\notin P$ then one immediately finds that $j=5$ and $P$ is a triangle. This is the triangle $\TpolyC$ in~Table~\ref{tab:triangle_smoothable}. If $(0,-1)\in\bdry P$ then $j\le 6$, and $P$ is either the triangle $\TpolyD$, or one of $\PpolyE$ or $\PpolyF$ in Table~\ref{tab:polygon_smoothable}. Finally, if $(0,-1)\in\intr{P}$ then the height of $E''$ must be two. Therefore $(0,-2)\in E''$, giving either the triangle $\TpolyE$, or one of $\PpolyG$, $\PpolyH$, or $\PpolyI$.

In the cases $(2,5,2)$, $(8,4,1)$, and $(9,3,1)$, we have equality in \eqref{eq:triangle}, hence $1=\Vol{\dual{P}}=\Vol{\dual{T}}$ and so $P=T$ is uniquely determined. This gives the triangles $\TpolyH$, $\TpolyG$, and $\TpolyF$. In the case $(6,3,1)$ the triangle $T=\TpolyI$. In the remaining cases it is easily verified that $T$ is not a minimal triangle with only $T$-singularities; this completes the proof of~\eqref{item:T-sing_minimals_triangle}.

Consider the cases $(6,3,1)$, $(7,3,1)$, and $(8,3,1)$. In all three cases $\rE=3$ and $b=1$, hence $P$ is contained in the rectangle $[-2,1]\times[-j+3,3]$, $j\in\{6,7,8\}$. If we assume that $P$ is not a triangle, it follows that that $(0,-1)\in\intr{P}$, therefore $(0,-1)\in\cone{E''}$ for an edge $E''\in\F{P}$ containing the bottom-right vertex $(1,3-j)$. This implies that the edge $E''$ has height two or three. Let us assume the height of $E''$ is three. Then it must have width three, and so $P$ is not a triangle there must exist one more vertex with first coordinate $-2$. However, this means that there exists a left vertical edge, contradicting minimality. Now assume that the height of $E''$ is two. Hence $(0,-2)$ is an interior point in $E''$, and since $E''$ has width two there is a unique vertex with first coordinate $-1$. Hence there can be at most one vertex left with first coordinate $-2$, excluding the vertex $(-2,3)$. Enumerating these possibilities shows that none results in a minimal Fano polygon with only $T$-cones.

Finally, consider the case $(9,4,1)$. We see that $(0,-2)\in\intr{P}$, hence the non-vertical edge $E''\in\F{P}$ containing $(1,-5)$ is of height either three or four. If $r_{E''}=3$ then $(-2,1)$ is a vertex of $P$, and if $r_{E''}=4$ then $(-3,-1)$ is a vertex of $P$. In either case $P$ fails to have only $T$-cones, and so~\eqref{item:T-sing_minimals_exceptional} is complete.
\end{proof}

\begin{cor}\label{cor:T-sing_mutation_classes_deg_8}
There are precisely two mutation-equivalence classes of Fano polygons with singularity content $(4,\emptyset)$. These classes are given by $\RpolyJ$ and $\RpolyK$, corresponding to the toric varieties $\FF_1$ and $\Proj^1\times\Proj^1$, respectively.
\end{cor}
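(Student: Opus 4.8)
The plan is to combine the classification of minimal polygons in Theorem~\ref{thm:T-sing_minimals} with the two mutation invariants developed in~\S\ref{subsec:T_sublattice} and~\S\ref{subsec:quivers_and_clusters}. Since singularity content is a mutation invariant and every Fano polygon is mutation-equivalent to a minimal one, it suffices to list the minimal polygons $P$ with $\SC{P}=(4,\emptyset)$ and then decide which of them are mutation-equivalent. Inspecting the value of $n$ in Tables~\ref{tab:triangle_smoothable} and~\ref{tab:polygon_smoothable} (equivalently, using $(-K_{X_P})^2=12-n=8$ from Proposition~\ref{prop:sing_content_degree}), the only entries of Theorem~\ref{thm:T-sing_minimals} with $n=4$ are the reflexive triangle $\RpolyO\cong\Proj(1,1,2)$ together with the reflexive quadrilaterals $\RpolyJ\cong\FF_1$ and $\RpolyK\cong\Proj^1\times\Proj^1$. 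Thus there are at most three classes, and the problem reduces to settling the mutation-equivalences among these three polygons.

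The key step, and the point I expect to be the main obstacle, is to show that the triangle $\RpolyO$ lies in the same mutation class as the quadrilateral $\RpolyK$; this is exactly what collapses the three minimal representatives down to two classes. I would exhibit an explicit mutation. Writing $\RpolyO=\sconv{(1,1),(-1,1),(0,-1)}$, take the primitive inner normal $w=(0,-1)$ to the top edge $E=\sconv{(-1,1),(1,1)}$, which has height $1$ and width $2\geq 1$, so a mutation exists by Lemma~\ref{lem:mutation_iff_length}. With factor $F=\sconv{\orig,(1,0)}$, a direct computation of the slices $w_h(\RpolyO)$ shows that $\mut_w(\RpolyO,F)$ is $\GL_2(\Z)$-equivalent to $\RpolyK$. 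This is precisely the mutation-equivalence recorded in Example~\ref{eg:reflexive_are_minimal}, where $\Proj(1,1,2)$ and $\Proj^1\times\Proj^1$ are shown to be mutation-equivalent minimal polygons; giving the mutation explicitly makes the argument self-contained.

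It remains to check that $\RpolyJ$ and $\RpolyK$ are genuinely distinct, so that the final count is two rather than one. This is Example~\ref{eg:smooth_case}: by Lemma~\ref{lem:sublattice_index_preserved} the index $[M:\Tlattice{P}]$ is a mutation invariant, and it equals $1$ for $\RpolyJ$ but $2$ for $\RpolyK$; alternatively, the quiver argument of Example~\ref{eg:smooth_case_2} separates them via the parity of the number of arrows. Hence $\RpolyJ\not\sim\RpolyK$, and combining this with $\RpolyO\sim\RpolyK$ we conclude that there are precisely two mutation-equivalence classes of Fano polygons with singularity content $(4,\emptyset)$, represented by $\RpolyJ\cong\FF_1$ and $\RpolyK\cong\Proj^1\times\Proj^1$.
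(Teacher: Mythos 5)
Your proposal is correct and follows essentially the same route as the paper: Theorem~\ref{thm:T-sing_minimals} yields the three minimal polygons $\RpolyO$, $\RpolyJ$, $\RpolyK$ with singularity content $(4,\emptyset)$, an explicit mutation (the paper merely asserts this is ``easy to see'') identifies $\RpolyO$ with $\RpolyK$, and Example~\ref{eg:smooth_case} separates $\RpolyJ$ from $\RpolyK$ via the invariant $[M:\Tlattice{P}]$. Your only addition is writing out the mutation $\mut_{(0,-1)}(\RpolyO,F)$ in full, which checks out and makes the argument self-contained.
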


\begin{proof}
By Theorem~\ref{thm:T-sing_minimals} there are only three minimal polygons with singularity content $(4,\emptyset)$: $\RpolyO$, $\RpolyJ$, and $\RpolyK$. It is easy to see that $\RpolyO$ and $\RpolyK$ are mutation-equivalent. That $\RpolyJ$ and $\RpolyK$ are distinct up to mutation is shown in Example~\ref{eg:smooth_case}.
\end{proof}

\begin{thm}\label{thm:T-sing_mutation_classes}
There are ten mutation-equivalence classes of Fano polygon with residual basket $\cB=\emptyset$.
Representative polygons for each mutation-equivalence class are given by \hyperlink{fig:RpolyP}{$\RpolyP$}, \hyperlink{fig:RpolyK}{$\RpolyK$}, \hyperlink{fig:RpolyJ}{$\RpolyJ$}, \hyperlink{fig:RpolyD}{$\RpolyD$}, \hyperlink{fig:RpolyA}{$\RpolyA$}, \hyperlink{fig:RpolyB}{$\RpolyB$}, \hyperlink{fig:RpolyF}{$\RpolyF$}, \hyperlink{fig:RpolyL}{$\RpolyL$}, \hyperlink{fig:PpolyJ}{$\PpolyJ$}, and \hyperlink{fig:TpolyH}{$\TpolyH$}.
\end{thm}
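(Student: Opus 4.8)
The plan is to combine the classification of minimal polygons in Theorem~\ref{thm:T-sing_minimals} with the observation in \S\ref{sec:minimality} that every mutation-equivalence class contains a minimal representative. Consequently it suffices to partition the $35$ minimal polygons of Theorem~\ref{thm:T-sing_minimals} into mutation-equivalence classes. Since singularity content, and in particular the number $n$ of primitive $T$-cones, is a mutation invariant, and since $(-K_{X_P})^2 = 12 - n$ by Proposition~\ref{prop:sing_content_degree}, polygons with distinct values of $n$ necessarily lie in distinct classes. Reading off Tables~\ref{tab:triangle_smoothable} and~\ref{tab:polygon_smoothable}, the minimal polygons occur precisely with $n\in\{3,4,5,6,7,8,9,10,11\}$, which already exhibits at least nine classes.

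First I would dispose of the two extreme values. For $n=3$ the only minimal polygon is the reflexive triangle $\RpolyP$ (corresponding to $\Proj^2$), so this value contributes a single class. For $n=4$ the minimal polygons are $\RpolyO$, $\RpolyJ$, and $\RpolyK$, and Corollary~\ref{cor:T-sing_mutation_classes_deg_8} shows these form exactly two classes, represented by $\RpolyJ$ ($\FF_1$) and $\RpolyK$ ($\Proj^1\times\Proj^1$), with $\RpolyO$ mutation-equivalent to $\RpolyK$. This is the unique value of $n$ at which the class count exceeds one, and it raises the lower bound from nine to ten; the inequivalence $\RpolyJ\not\sim\RpolyK$ is detected by the sublattice-index invariant of Lemma~\ref{lem:sublattice_index_preserved}, or equivalently by the quiver-parity invariant of Example~\ref{eg:smooth_case_2}.

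For the remaining values $n\in\{5,6,7,8,9,10,11\}$ the claim to establish is that all minimal polygons sharing a given $n$ are mutually mutation-equivalent, so that each such $n$ contributes exactly one class. I would verify this by exhibiting, for each group, an explicit chain of mutations linking its members. Concretely, for a minimal polygon $P$ and an edge $E$ whose width is at least its height, Lemma~\ref{lem:mutation_iff_length} supplies a mutation $\mut_{w_E}(P,F)$, which one computes directly from Definition~\ref{defn:mutation}; one then puts the image into normal form and, should it be non-minimal, applies the reduction process of \S\ref{sec:minimality} to land back on a minimal polygon. Tracing these moves shows that within each $n$-group the induced graph on minimal polygons is connected, and choosing one representative per group yields $\RpolyD$, $\RpolyA$, $\RpolyB$, $\RpolyF$, $\RpolyL$, $\PpolyJ$, and $\TpolyH$ for $n=5,\ldots,11$ respectively, matching the representatives in the statement.

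The main obstacle is organisational rather than conceptual: each of the roughly two dozen mutations must be computed with the correct choice of normal vector $w$ and factor $F$, and its image identified up to $\GL_2(\Z)$ with a tabulated minimal polygon. Crucially, because Theorem~\ref{thm:T-sing_minimals} already fixes the complete list of minimal polygons, no search for new polygons is required — every mutation, after reduction, must reproduce a member of the known list — so the procedure terminates and collapses to a finite, mechanical check of explicit vertex coordinates.
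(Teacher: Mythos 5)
Your proposal is correct and follows essentially the same route as the paper: reduce to the $35$ minimal polygons of Theorem~\ref{thm:T-sing_minimals}, separate classes by the invariant $n$, handle the exceptional split at $n=4$ via Corollary~\ref{cor:T-sing_mutation_classes_deg_8} (equivalently Lemma~\ref{lem:sublattice_index_preserved} or Example~\ref{eg:smooth_case_2}), and connect the remaining polygons within each fixed $n$ by explicit chains of mutations. The paper carries out exactly this finite check, exhibiting the chain for $n=6$ and noting the other cases are similar.
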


\begin{proof}
Let $P,Q\subset\NQ$ be two minimal Fano polygons as given in Theorem~\ref{thm:T-sing_minimals} with $\SC{P}=\SC{Q}$. With the exception of the case when $\SC{P}=\SC{Q}=(4,\emptyset)$, which is handled in Corollary~\ref{cor:T-sing_mutation_classes_deg_8} above, it can easily be seen that $P$ and $Q$ are mutation-equivalent. We will do the case when $\SC{P}=\SC{Q}=(6,\emptyset)$; the remaining cases are similar. The minimal Fano polygons $\RpolyN$, $\RpolyH$, $\RpolyC$, and $\RpolyA$ are connected via a sequence of mutations:
\begin{center}
\includegraphics[scale=0.6]{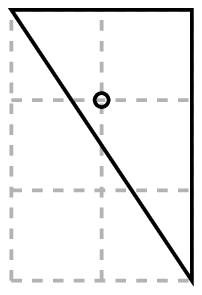}
\raisebox{32pt}{$\xmapsto{(-1,0)}$}
\raisebox{15.5pt}{\includegraphics[scale=0.6]{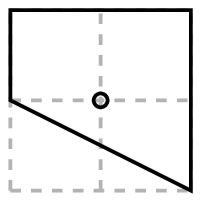}}
\raisebox{32pt}{$\xmapsto{(0,-1)}$}
\raisebox{53pt}{\includegraphics[scale=0.6,angle=-90]{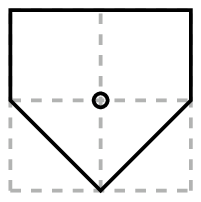}}
\raisebox{32pt}{$\xmapsto{(-1,0)}$}
\raisebox{15.5pt}{\includegraphics[scale=0.6]{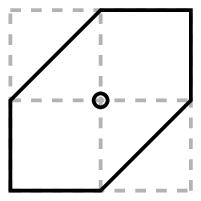}}
\end{center}
The mutations have been labelled with their corresponding primitive inner normal vector $w$.
\end{proof}

\section{Finiteness of minimal Fano polygons}\label{sec:general_case}
In this section we generalise Theorem~\ref{thm:T-sing_minimals} to the case when the residual basket $\cB\neq\emptyset$.

\begin{definition}\label{defn:basket_numerics}
Given a residual basket $\cB\neq\emptyset$ we define
\begin{align*}
\mB&:=\max{r_\sigma\mid\sigma\in\cB},\\
\dB&:=\lcm{\denom{A_\sigma}\mid\sigma\in\cB},\\
\sB&:=-\bmin{\{0\}\cup\{A_\sigma\mid\sigma\in\cB\}},
\end{align*}
where $r_\sigma$ is the Gorenstein index of $\sigma$, $A_\sigma$ is the contribution of $\sigma$ to the degree (as given in Proposition~\ref{prop:sing_content_degree}), and $\denom{x}$ denotes the denominator of $x\in\Q$. In the case when $\cB=\emptyset$ we define $\mB:=1$, $\dB:=1$, and $\sB:=0$.
\end{definition}

\begin{remark}
Bounding $\mB$ automatically bounds the number of possible types of singularities that can occur in the residual basket. In particular there are only finitely many possible values of $A_\sigma$, hence $\dB$ and $\sB$ are bounded from above.
\end{remark}

\begin{thm}\label{thm:general_minimals}
There exist only a finite number of minimal Fano polygons, up to the action of $\GL_2(\Z)$, with bounded maximal local index $\mB$ of the cones in the residual baskets $\cB$.
\end{thm}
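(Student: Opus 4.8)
The plan is to bound, in terms of the fixed constant $M\geq\mB$, three quantities attached to a minimal Fano polygon $P$: the number $\abs{\cB}$ of residual cones, the number $n$ of primitive $T$-cones, and the maximum local index $\mP$. Once all three are bounded the number of interior lattice points of $P$ is bounded, and finiteness follows from the classical theorem of~\cite{LZ91,Sco76}. First I would record the arithmetic consequences of $\mB\leq M$. A residual singularity $\frac{1}{k_0r}(1,k_0c-1)$ has $r\leq\mB\leq M$ and $0<k_0<r$, so only finitely many residual types occur; in particular $\dB$, $\sB$, and the lattice-point contributions $\abs{\intr{D}\cap N}$, $\abs{\bdry{D}\cap N}$ of each residual cone $D$ are bounded in terms of $M$. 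Since $\Vol{\dual{P}}=(-K_{X_P})^2>0$ while $\dB\cdot\Vol{\dual{P}}\in\Z$, we obtain the lower bound $\Vol{\dual{P}}\geq 1/\dB$, which is the key positivity input for bounding heights.

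Next I would bound $\abs{\cB}$ and $n$ using the degree formula of Proposition~\ref{prop:sing_content_degree}, namely $\Vol{\dual{P}}=12-n-\sum_{\sigma\in\cB}A_\sigma$. Each residual singularity has $A_\sigma>0$, and among the finitely many residual types compatible with $\mB\leq M$ these values are bounded below by some $a_0=a_0(M)>0$; hence $\Vol{\dual{P}}>0$ forces $\abs{\cB}\,a_0\leq\sum_\sigma A_\sigma<12$, so $\abs{\cB}$ is bounded, and then $n\leq 12+\abs{\cB}\,\sB$ is bounded as well (this is where $\sB$ enters, covering uniformly the possibility of non-positive contributions). By~\eqref{eq:bound_on_rank} the number of edges $\abs{\F{P}}=\abs{\V{P}}\leq n+\abs{\cB}$ is then also bounded.

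The central step is to bound $\mP$. If the edge $E$ attaining $\rE=\mP$ spans a residual cone then $\mP=\rE\leq\mB\leq M$ and there is nothing to prove, so assume $\cone{E}$ contains a primitive $T$-cone. Minimality, via Corollary~\ref{cor:minimise_heights}, supplies a vertex $v$ with $w_E(v)=\hmax\geq\rE$, and Lemma~\ref{lem:edge_projection} bounds the widths of the edges transverse to the $T$-cone direction. Exactly as in the proof of Theorem~\ref{thm:T-sing_minimals}, I would use this configuration to inscribe a Fano triangle $T\subseteq P$ built from (part of)~$E$ and the vertex $v$. Since $T\subseteq P$ we have $\dual{P}\subseteq\dual{T}$, whence $\Vol{\dual{P}}\leq\Vol{\dual{T}}$; computing $\Vol{\dual{T}}$ from the weights via Lemma~\ref{lem:triangle_degree} and feeding in $\Vol{\dual{P}}\geq 1/\dB$ produces an inequality that forces $\rE$ to be bounded, the relevant auxiliary parameter (the number of $T$-cones along an edge) being at most $n<12$. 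I expect this to be the main obstacle: with a nonempty basket the extremal vertex $v$ or the maximal edge may interact with residual cones, so the clean case analysis of the $\cB=\emptyset$ proof (vertical versus non-vertical adjacent edge, and the split $j=4$ versus $j\geq 5$) must be redone, using the residual structure only to \emph{enlarge} $P$ so that the inclusion $T\subseteq P$, and hence $\Vol{\dual{P}}\leq\Vol{\dual{T}}$, is preserved.

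Finally I would assemble the bound. With $n$ and $\mP$ bounded, the heights $r_1,\ldots,r_n$ of the primitive $T$-cones are each at most $\mP$, and the basket contribution $\sum_\cB\abs{\intr{D}\cap N}$ is bounded; so by~\eqref{eq:minimal_intr} the number $\abs{\intr{P}\cap N}$ of interior lattice points is bounded in terms of $M$. A Fano polygon has at least one interior point, and by~\cite{LZ91,Sco76} there are only finitely many $\GL_2(\Z)$-equivalence classes of Fano polygons with a given number of interior lattice points; summing over the finitely many admissible values gives the finiteness claimed.
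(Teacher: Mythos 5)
Your overall architecture is the right one, and in its key step it matches the paper: an edge of local index exceeding $\mB$ must span a pure $T$-cone, minimality supplies a deep opposite vertex, one inscribes a Fano triangle $T\subseteq P$, and the inequality $\frac{1}{\dB}\leq\Vol{\dual{P}}\leq\Vol{\dual{T}}$, with $\Vol{\dual{T}}$ computed from Lemma~\ref{lem:triangle_degree}, caps $\rE$. (For $\mP\leq\mB$ the paper simply invokes the classification of~\cite{KKN08} rather than passing through interior lattice points, but your ending via \eqref{eq:minimal_intr} and~\cite{LZ91,Sco76} would serve equally well once the bounds are in place.)

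There is, however, a genuine error in your second step: residual singularities do \emph{not} all satisfy $A_\sigma>0$. For instance $\sigma=\frac{1}{R}(1,1)$ with $R$ odd is residual ($k=1<r=R$) and has $A_\sigma=2-R\left(1-\frac{2}{R}\right)^2=6-R-\frac{4}{R}$, which is negative for all $R\geq 7$; this is consistent with $(-K)^2=(R+2)^2/R$ for $\Proj(1,1,R)$ growing without bound. Indeed, the quantity $\sB:=-\bmin{\{0\}\cup\{A_\sigma\mid\sigma\in\cB\}}$ of Definition~\ref{defn:basket_numerics} exists precisely because $A_\sigma$ can be negative; were your claim true, $\sB$ would vanish identically. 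The failure propagates: $\sum_\sigma A_\sigma<12$ no longer bounds $\abs{\cB}$, hence your bound on $n$ collapses, and in particular the assertion that the number $j$ of primitive $T$-cones on the vertical edge satisfies $j\leq n<12$ is unjustified. This is not a cosmetic issue but the crux of the theorem: the paper instead bounds the number of edges other than $E$ and $E'$ geometrically by $\rE+1$ (via Lemma~\ref{lem:edge_projection}), which yields only the $\rE$-dependent bound $j<11+(\rE+1)\sB$ of~\eqref{eq:j-bound_general}; this must be played off against the dual-volume inequality $\rE^2\leq(j+20)\dB$ of~\eqref{eq:triangle_general}, and the resulting quadratic in $\rE$ solved to obtain~\eqref{eq:l-bound_r=0_general} (and its analogue~\eqref{eq:l-bound_r>0_general} when the vertical edge carries a residual piece). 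Your sketch omits this coupling of the two bounds, which is exactly where the nonempty basket makes the argument harder than the $\cB=\emptyset$ case.
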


\begin{proof}
We assume throughout that $\cB\neq\emptyset$, the empty case having already been considered in Theorem~\ref{thm:T-sing_minimals}. The proof is constructive, and follows a similar structure to the proof when $\cB=\emptyset$.

\proofsection{$\mP=\mB$}
The number of possible Fano polygons with bounded maximal local index $\mP$ is known to be finite, up to $\GL_2(\Z)$-equivalence. Algorithms for computing all such Fano polygons are described in~\cite{KKN08}.

\proofsection{$\mP>\mB$}
Let $E\in\F{P}$ be an edge of maximum local index $\rE=\mP>\mB$. In particular $\cone{E}$ is a $T$-cone, hence $\abs{E\cap N}=k\rE+1$ for some $k\in\Z_{>0}$. After suitable change of basis we can insist that $E=\sconv{(-a,\rE),(b,\rE)}$ where $a,b>0$, $a<\rE$, and $a+b=k\rE$. Hence, as in the proof of Theorem~\ref{thm:T-sing_minimals}, there exist $k$ distinct points $v_i=(i-1,1)$, for $i=1,\ldots,k$, where each $v_i\in\intr{\cone{E}}$, $w_E(v_i)=-1$. Let $v\in\V{P}$ be a vertex of $P$ such that $w_E(v)=\max{w_E(v')\mid v'\in P}$. Since $P$ is minimal by assumption, by Corollary~\ref{cor:minimise_heights} we have that $w_E(v)\geq\rE$. By applying Lemma~\ref{lem:edge_projection} with respect to $v_1$ and $v_k$ we conclude that $k\leq 2$.

\proofsection{$k=2$}
First we consider the case when $k=2$. As in the proof of Theorem~\ref{thm:T-sing_minimals} we have that
$$P=\sconv{(-a,\rE),(-a+2\rE,\rE),(-a,-\rE)},\quad\text{ where }0<a<\rE, \gcd{a,\rE}=1.$$
Let $E_1$ and $E_2$ be the two edges of $P$ distinct from the horizontal edge $E$, where $r_{E_1}=a$ and $r_{E_2}=\rE-a$. Since at least one of these two edges has local index $\mB$, by symmetry we can assume that $a=\mB$. The edge $E_2$ is of width $2\rE$, giving $2\rE=j(\rE-\mB)+l$ for some $j\in\Z_{\geq 0}$, $0\leq l<\rE-\mB$.

If $l=0$ then $j\mB=(j-2)\rE$, and $\gcd{\mB,\rE}=1$ implies that $\rE\divides j$. Writing $j=j'\rE$ for some $j'\in\Z_{\geq 0}$ we see that $2=j'(\rE-\mB)$ and hence $\rE=\mB+1$ or $\rE=\mB+2$. If $l>0$ then $\rE-\mB\leq\mB$, and so $\rE<2\mB$ (the case of equality being excluded by primitivity). Hence in either case the number of possible minimal polygons is finite.

\proofsection{$k=1$}
We now consider the case when $k=1$. Once again we subdivide this into two cases depending on whether there exists an edge $E'\in\F{P}$ parallel to $E$.

\proofsection{Edge $E'$ parallel to $E$}
First we assume that there exists an edge $E'\in\F{P}$ such that $v$ is a vertex of $E'$, and $E'$ is parallel to $E$. If $\cone{E'}$ contains a residual component then $r_{E'}\leq\mB$. By minimality $\rE\le\mB$, which is a contradiction. Therefore $\cone{E'}$ must be of class $T$, and by minimality we see that $r_{E'}=\rE$. As in the proof of Theorem~\ref{thm:T-sing_minimals} we conclude that $P$ is a rectangle:
$$P=\sconv{(-a,\rE),(-a+\rE,\rE),(-a,-\rE),(-a+\rE,-\rE)}.$$
Since one of the two vertical edges must contain a residual component at height $\mB$, without loss of generality we may assume that $a=\mB$, hence $2\rE=j\mB+l$ for some $j\in\Z_{\geq 0}$, $0<l<\mB$. Notice that if $j=0$ then $2\rE=l<\mB$, a contradiction. Hence $j>0$. The second vertical edge lies at height $\rE-\mB$, and by Corollary~\ref{cor:minimise_heights} we have that $2\mB\leq\rE$. Hence $2\rE=j'(\rE-\mB)+l'$ for some $j'\in\Z_{\geq 0}$, $0\leq l'<\rE-\mB$. If $j'=0$ then $2\rE=l'<\rE-\mB$, a contradiction. If $j>0$ then, by minimality, $\rE\leq 2\mB$, implying that $\rE=2\mB$. But this contradicts primitivity, hence this case does not occur.

\proofsection{No edge parallel to $E$}
We now assume that $v$ is the unique point in $P$ such that $w_E(v)=\mathrm{max}\{w_E(v')\mid v'\in P\}$. Once again we subdivide this into two cases, depending on whether there exists a vertical edge $E'\in\F{P}$ with vertex $v$.

\proofsection{$E'$ not vertical}
This proof in this case is identical to that of Theorem~\ref{thm:T-sing_minimals}: it results in no minimal polygons.

\proofsection{$E'$ vertical}
Without loss of generality we may assume that there is a vertical edge $E'\in\F{P}$ with vertices $v$ and $(b,\rE)$. Hence $v=(b,\rE-jb-l)$ for some $j\in\Z_{\geq 0}$, $0\leq l<b$. By minimality of $E$,
\begin{equation}\label{ineq:b-min_general}
2\rE\le jb+l.
\end{equation}
Notice that if $j=0$ then $2\rE\leq l<\mB$, a contradiction. Hence $j>0$ and, by minimality of $E'$,
\begin{equation}\label{ineq:b-max_general}
2b\le\rE.
\end{equation}

\proofsection{$l=0$}
Consider the case when $l=0$. Inequalities~\eqref{ineq:b-min_general} and~\eqref{ineq:b-max_general} imply that $j\ge 4$.

\proofsection{$j=4$}
Assume that $j=4$. Then $2b=\rE$, so $\gcd{b,\rE}=1$ implies that $\rE=2$ and $b=1$. Hence $P$ is contained in the rectangle $[-1,1]\times [-2,2]$. This contains only four possible polygons, all of which have $\mP\leq 3$, contradicting $\mP>\mB\geq 3$. Hence this case does not occur.

\proofsection{$j\geq 5$}
As in the proof of  Theorem~\ref{thm:T-sing_minimals}, $\orig$ is contained in the strict interior of the triangle
$$T:=\sconv{(b-\rE,\rE),(b,\rE),(b,\rE-jb)}\subset P.$$
As before, the volume of the dual triangle is given by
$$\Vol{\dual{T}}=\frac{j}{jb\rE-jb^2-\rE^2}.$$
By Proposition~\ref{prop:sing_content_degree}, $\Vol{\dual{P}}\in\frac{1}{\dB}\cdot\Z_{>0}$, hence $\frac{1}{\dB}\le\Vol{\dual{P}}\le\Vol{\dual{T}}$. Let $b=\rE q$, where $\frac{2}{j}\leq q\leq\frac{1}{2}$, by~\eqref{ineq:b-min_general} and~\eqref{ineq:b-max_general}. Then $\rE^2(-jq^2+jq-1)\le j\dB$, and by considering the minimum value achieved by the quadratic in $q$ on the left-hand-side of this inequality we obtain
\begin{equation}\label{eq:triangle_general}
\rE^2\le\frac{j^2\dB}{j-4}=\left(j+4+\frac{16}{j-4}\right)\dB\leq(j+20)\dB.
\end{equation}

Notice that the number of edges of $P$ distinct from $E$ and $E'$ can be at most $\rE+1$. By Proposition~\ref{prop:sing_content_degree} we obtain that $12-1-j+(\rE+1)\sB\ge\Vol{\dual{P}}>0$. Hence
\begin{equation}\label{eq:j-bound_general}
j<11+(\rE+1)\sB.
\end{equation}
Combining inequalities~\eqref{eq:triangle_general} and~\eqref{eq:j-bound_general} gives $\rE^2<(31+(\rE+1)\sB)\dB$, which implies that
\begin{equation}\label{eq:l-bound_r=0_general}
\rE<\frac{\sB\dB+\sqrt{(\sB\dB)^2+4\dB(\sB+31)}}{2}.
\end{equation}
Since $j$ is bounded by~\eqref{eq:j-bound_general}, the result follows.

\proofsection{$l>0$}
Finally, let us consider the case when the edge $E'$ contributes a residual singularity to the basket $\cB$. Notice that if we had equality in~\eqref{ineq:b-max_general}, primitivity forces $\rE=2$ and $b=1$, and so $l=0$, which is a contradiction. Hence we conclude that the inequality is strict:
\begin{equation}\label{ineq:b-max_residual_general}
2b<\rE.
\end{equation}
Once again we find that $j\geq 4$.

Consider the triangle
$$T:=\sconv{(b-\rE,\rE),(b,\rE),(b,\rE-jb-l)}\subset P.$$
One can check that $\orig\in\intr{T}$, so $T$ is a Fano triangle with $\Vol{T}=\rE(jb+l)$. Moreover the pairwise coprime weights
$$\frac{1}{\gcd{jb+l,\rE^2}}\left(b(jb+l),(jb+l)(\rE-b)-\rE^2,\rE^2\right)\in\Z_{>0}^3$$
satisfy the conditions of Lemma~\ref{lem:triangle_degree} (that the second weight is strictly positive follows from~\eqref{ineq:b-min_general} and~\eqref{ineq:b-max_residual_general}: $(jb+l)(\rE-b)-\rE^2\geq 2\rE^2-2\rE b-\rE^2=\rE(\rE-2b)>0$). Hence
$$\Vol{\dual{T}}=\frac{jb+l}{b(jb+l)(\rE-b)-b\rE^2}<\frac{j+1}{(jb+l)(\rE-b)-\rE^2}.$$
Recalling, as above, that $\frac{1}{\dB}\le\Vol{\dual{P}}\leq\Vol{\dual{T}}$ we obtain:
\begin{equation}\label{eq:triangle_residual_general}
(jb+l)(\rE-b)-\rE^2<(j+1)\dB.
\end{equation}
The quadratic in $b$ on the left-hand-side of~\eqref{eq:triangle_residual_general} is strictly positive in the range $\frac{2\rE-l}{j}\leq b<\frac{\rE}{2}$, and obtains its minimum value when $b=\frac{2\rE-l}{j}$. Hence~\eqref{eq:triangle_residual_general} gives:
\begin{equation}\label{eq:residual_general_bound}
(j-4)\rE^2+2l\rE<j(j+1)\dB.
\end{equation}
We consider the cases $j=4$ and $j>4$ separately.

\proofsection{$j=4$}
When $j=4$ inequalities~\eqref{ineq:b-min_general} and~\eqref{ineq:b-max_residual_general} give $\rE-\frac{l}{2}\leq 2b<\rE$. Hence if $l=1$ this case does not occur. If $l>1$ then~\eqref{eq:residual_general_bound} gives us that $\rE<10\dB$, resulting in only finitely many possibilities.

\proofsection{$j\geq 5$}
When $j\geq 5$ inequality~\eqref{eq:residual_general_bound} implies that
\begin{equation}\label{eq:residual_general_bound2}
\rE^2<\frac{j(j+1)\dB}{j-4}=\left(j+5+\frac{20}{j-4}\right)\dB\leq(j+25)\dB.
\end{equation}
Notice that the number of edges of $P$ distinct from $E$ and $E'$ is at most $\rE+1$, so by Proposition~\ref{prop:sing_content_degree} we have that $12-1-j+(\rE+2)\sB\ge\Vol{\dual{P}}>0$. Hence
\begin{equation}\label{eq:j-bound_general2}
j<11+(\rE+2)\sB.
\end{equation}
Combining inequalities~\eqref{eq:residual_general_bound2} and~\eqref{eq:j-bound_general2} gives $\rE^2<(36+(\rE+2)\sB)\dB$. This implies that
\begin{equation}\label{eq:l-bound_r>0_general}
\rE<\frac{\sB\dB+\sqrt{(\sB\dB)^2+4\dB(2\sB+36)}}{2}.
\end{equation}
Since $j$ is bounded by~\eqref{eq:j-bound_general2} we have only finitely many possible minimal polygons.
\end{proof}

\section{Minimal Fano polygons with $\cB=\{m\times\third\}$}\label{sec:third_one_one}
In this section we apply Theorem~\ref{thm:general_minimals} in order to classify all minimal Fano polygons with residual basket $\cB$ containing only singularities of type $\third$. We find $64$ minimal Fano polygons (Theorem~\ref{thm:minimal_third_one_one} and Table~\ref{tab:minimal_third_one_one}), which result in $26$ mutation-equivalence classes (Theorem~\ref{thm:classes_third_one_one} and Table~\ref{tab:third_one_one_polygons}). These mutation-equivalence classes correspond exactly with the classification of Corti--Heuberger of qG-deformation-equivalence classes of del~Pezzo surfaces of class TG with $m\times\third$ singular points~\cite{CH}.

In some sense $\third$ is the ``simplest'' residual singularity. Up to change of basis, the corresponding cone is given by $C:=\scone{(1,0),(2,3)}$. The width of the line segment joining the primitive generators of the rays of $C$ is one. The local index is three. By Example~\ref{eg:degree_hilb_third_one_one} any Fano polygon $P$ with singularity content $\left(n,\{m\times\third\}\right)$ gives rise to a toric surface $X$ with degree
$$(-K_X)^2=12-n-\frac{5m}{3}.$$
In the notation of Definition~\ref{defn:basket_numerics} $\mB=\dB=3$ and $\sB=0$.

\begin{thm}\label{thm:minimal_third_one_one}
Let $P\subset\NQ$ be a minimal Fano polygon with residual basket $\cB=\{m\times\third\}$, for some $m\geq 1$. Then, up to $\GL_2(\Z)$-equivalence, $P$ is one of the $64$ polygons listed in Table~\ref{tab:minimal_third_one_one}.
\end{thm}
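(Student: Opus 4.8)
The plan is to specialise the finiteness argument of Theorem~\ref{thm:general_minimals} to the concrete numerics of the basket $\cB=\{m\times\third\}$. As recorded above, here $\mB=\dB=3$ and $\sB=0$, so every bound appearing in the proof of Theorem~\ref{thm:general_minimals} becomes an explicit integer constraint, and the classification reduces to a finite — if lengthy — enumeration. First I would dispose of the case $\mP=\mB=3$: the Fano polygons of maximum local index at most $3$ form a finite, computable set by the algorithms of~\cite{KKN08}, and from this list one retains precisely those that are minimal (tested via Corollary~\ref{cor:minimise_heights}) and whose residual basket consists solely of $\third$ singularities — that is, whose residual cones all have Gorenstein index $3$ and width $1$ with the correct orientation.

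The substantive case is $\mP>\mB=3$, which I would treat by following the branching structure of the proof of Theorem~\ref{thm:general_minimals} with every parameter now pinned down. Setting $\sB=0$ in~\eqref{eq:j-bound_general} and~\eqref{eq:j-bound_general2} forces $j\leq 10$, while substituting $\dB=3$, $\sB=0$ into~\eqref{eq:l-bound_r=0_general} and~\eqref{eq:l-bound_r>0_general} forces $\rE\leq 10$; the $k=2$ branch similarly yields only $\rE\leq 5$. Together with primitivity, $\gcd{b,\rE}=1$, and the inequalities~\eqref{ineq:b-min_general} and~\eqref{ineq:b-max_general}, this leaves finitely many admissible tuples $(j,\rE,b)$ in the $l=0$ branch and $(j,\rE,b,l)$ in the $l>0$ branch, each determining the anchor Fano triangle $T\subseteq P$ used in the proof. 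The remaining branches — an edge $E'$ parallel to $E$, or $E'$ adjacent to $v$ but not vertical — are handled exactly as in Theorem~\ref{thm:general_minimals} and contribute only the rectangle families already isolated there.

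Having bounded the anchor triangle $T$, I would enumerate, for each admissible tuple, the ways to complete $T$ to a minimal Fano polygon $P$ with $T\subseteq P$. Since $\Vol{\dual{P}}=12-n-5m/3>0$ by Example~\ref{eg:degree_hilb_third_one_one}, the number of edges is capped, and for each candidate edge one verifies that $\cone{E}$ is either a $T$-cone or an $R$-cone of type \emph{precisely} $\third$. Minimality of each resulting $P$ is then tested against Corollary~\ref{cor:minimise_heights}, namely $\abs{\hmin}\leq\hmax$ for every edge wide enough to admit a mutation, and the survivors are reduced modulo $\GL_2(\Z)$ to remove duplicates. Collecting the output yields the $64$ polygons of Table~\ref{tab:minimal_third_one_one}.

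The main obstacle is not any single sharp estimate but the size and bookkeeping of the enumeration: the constructive bounds are deliberately generous (with $\rE$ up to $10$ and $j$ up to $10$), so the set of candidate anchor triangles together with their admissible completions is large, and each candidate must be screened for primitivity of its vertices, for the basket being exactly $m\times\third$ rather than some other residual singularity of Gorenstein index $3$, and for genuine minimality (the derived inequalities are necessary conditions only). This is exactly the finite search that the algorithm underlying Theorem~\ref{thm:general_minimals} was engineered to make effective, so in practice I would carry it out by computer and present Table~\ref{tab:minimal_third_one_one} as the certified output, confirming representative entries by hand.
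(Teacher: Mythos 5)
Your proposal is correct and follows essentially the same route as the paper: dispose of $\mP=\mB=3$ by sifting the finite classification of \cite{KKN08} for minimal polygons with the right basket, then specialise the branching and the bounds of Theorem~\ref{thm:general_minimals} with $\mB=\dB=3$, $\sB=0$ to get $j\leq 10$, $\rE\leq 10$ (and $\rE\leq 5$ in the $k=2$ branch), and finish by a finite enumeration of completions of the anchor triangles, which is exactly how the paper extracts the four polygons not already found at index~$3$.
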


\begin{proof}
To derive the classification, we follow the proof of Theorem~\ref{thm:general_minimals}.

\proofsection{$\mP=3$}
Techniques for classifying all Fano polygons with given maximum local index $\mP$ (or with given Gorenstein index $r$) are described in~\cite{KKN08}. The resulting classifications for low index are available online~\cite{GRDb}, and it is a simple process to sift these results for the minimal polygons we require. There are precisely $60$ such polygons. These are the polygons in Table~\ref{tab:minimal_third_one_one}, excluding numbers~\hyperlink{row:1.1}{$(1.1)$}, \hyperlink{row:1.2}{$(1.2)$}, \hyperlink{row:1.3}{$(1.3)$}, and~\hyperlink{row:2.6}{$(2.6)$}.

\proofsection{$\mP>3, k=2$}
In this case $P$ is a triangle with vertices $(-3,\rE)$, $(-3+2\rE,\rE)$, and $(-3,-\rE)$, where $\rE=4$ or $5$. We require that $2\rE\equiv 1\ \modb{3}$, which excludes $\rE=4$. This gives polygon number~\hyperlink{row:1.1}{$(4.4)$} in Table~\ref{tab:minimal_third_one_one}.

\proofsection{$\mP>3$, $k=1$, no edge parallel to $E$, $E'$ vertical, $l=0$, $j\geq 5$}
In this case $P$ has vertices $(b-\rE,\rE)$, $(b,\rE)$, $(b,\rE-jb)$, and is contained in the rectangle $[b-\rE,b]\times [\rE-jb,\rE]$ where $5\leq j<11$, by~\eqref{eq:j-bound_general}, $3<\rE\leq 9$, by~\eqref{eq:l-bound_r=0_general}, and $\frac{2\rE}{j}\leq b\leq\frac{\rE}{2}$ by~\eqref{ineq:b-min_general} and~\eqref{ineq:b-max_general}. This gives three minimal polygons: numbers~\hyperlink{row:1.2}{$(1.2)$}, \hyperlink{row:1.3}{$(1.3)$}, and~\hyperlink{row:2.6}{$(2.6)$} in Table~\ref{tab:minimal_third_one_one}.

\proofsection{$\mP>3$, $k=1$, no edge parallel to $E$, $E'$ vertical, $l>0$, $j\geq 5$}
In this case $P$ has vertices $(b-\rE,\rE)$, $(b,\rE)$, $(b,\rE-jb-1)$, and is contained in the rectangle $[b-\rE,b]\times [\rE-jb-1,\rE]$ where $5\leq j<11$, by~\eqref{eq:j-bound_general2}, $3<\rE\leq 10$, by~\eqref{eq:l-bound_r>0_general}, and $\frac{2\rE}{j}\leq b<\frac{\rE}{2}$, by~\eqref{ineq:b-min_general} and~\eqref{ineq:b-max_residual_general}. This results in no minimal polygons.
\end{proof}
\begin{center}
\begin{longtable}{rlccccc}
\caption{The $64$ minimal Fano polygons $P\subset\NQ$ with singularity content $\left(n,\{m\times\third\}\right)$, $m\geq 1$, and vertices $\V{P}$, up to the action of $\GL_2(\Z)$, with Gorenstein index $r$ and maximum local index $\mP$. The degrees $(-K_X)^2=12-n-\frac{5m}{3}$ of the corresponding toric varieties are also given. The polygons are partitioned into $26$ mutation equivalence classes; the invariants $n$ and $m$ completely determine the mutation equivalence class except when $n=6$, $m=2$, $(-K_X)^2=\frac{8}{3}$, and when $n=5$, $m=1$, $(-K_X)^2=\frac{16}{3}$. See Theorems~\ref{thm:minimal_third_one_one} and~\ref{thm:classes_third_one_one}.}
\label{tab:minimal_third_one_one}\\
\toprule
\multicolumn{1}{c}{\#}&\multicolumn{1}{c}{$\V{P}$}&$r$&$\mP$&$n$&$m$&$(-K_X)^2$\\
\cmidrule(lr){1-1} \cmidrule(lr){2-2} \cmidrule(lr){3-4} \cmidrule(lr){5-7}
\endfirsthead
\multicolumn{7}{l}{\vspace{-0.7em}\tiny Continued from previous page.}\\
\addlinespace[1.7ex]
\midrule
\multicolumn{1}{c}{\#}&\multicolumn{1}{c}{$\V{P}$}&$r$&$\mP$&$n$&$m$&$(-K_X)^2$\\
\cmidrule(lr){1-1} \cmidrule(lr){2-2} \cmidrule(lr){3-4} \cmidrule(lr){5-7}
\endhead
\multicolumn{7}{r}{\raisebox{0.2em}{\tiny Continued on next page.}}\\
\endfoot
\bottomrule
\endlastfoot
\oddrow \padding $1.1$&\gap $(7,5),(-3,5),(-3,-5)$&$30$&$5$&$10$&$1$&$\frac{1}{3}$\\
\oddrow \padding $1.2$&\gap $(2,5),(-3,5),(-3,4),(2,-11)$&$30$&$5$&$10$&$1$&$\frac{1}{3}$\\
\oddrow \padding $1.3$&\gap $(2,7),(-5,7),(2,-7)$&$42$&$7$&$10$&$1$&$\frac{1}{3}$\\
\cmidrule(lr){1-1} \cmidrule(lr){2-2} \cmidrule(lr){3-4} \cmidrule(lr){5-7}
\evnrow \padding $2.1$&\gap $(11,2),(-1,2),(-5,-2)$&$6$&$3$&$8$&$2$&$\frac{2}{3}$\\
\evnrow \padding $2.2$&\gap $(5,2),(-1,2),(-2,1),(-2,-5)$&$6$&$3$&$8$&$2$&$\frac{2}{3}$\\
\evnrow \padding $2.3$&\gap $(5,2),(-1,2),(-5,-2),(1,-2)$&$6$&$3$&$8$&$2$&$\frac{2}{3}$\\
\evnrow \padding $2.4$&\gap $(3,2),(-1,2),(-5,-2),(3,-2)$&$6$&$3$&$8$&$2$&$\frac{2}{3}$\\
\evnrow \padding $2.5$&\gap $(1,2),(-1,2),(-5,-2),(5,-2)$&$6$&$3$&$8$&$2$&$\frac{2}{3}$\\
\evnrow \padding $2.6$&\gap $(2,5),(-3,5),(-3,4),(1,-4),(2,-5)$&$30$&$5$&$8$&$2$&$\frac{2}{3}$\\
\cmidrule(lr){1-1} \cmidrule(lr){2-2} \cmidrule(lr){3-4} \cmidrule(lr){5-7}
\oddrow \padding $3.1$&\gap $(7,2),(-1,2),(-2,1),(-2,-1),(-1,-2)$&$6$&$3$&$6$&$3$&$1$\\
\oddrow \padding $3.2$&\gap $(3,1),(3,2),(-1,2),(-2,1),(-2,-3),(-1,-3)$&$6$&$3$&$6$&$3$&$1$\\
\oddrow \padding $3.3$&\gap $(2,1),(1,2),(-1,2),(-2,1),(-2,-1),(2,-5)$&$6$&$3$&$6$&$3$&$1$\\
\cmidrule(lr){1-1} \cmidrule(lr){2-2} \cmidrule(lr){3-4} \cmidrule(lr){5-7}
\evnrow \padding \hypertarget{row:4.1}{}$4.1$&\gap $(7,2),(-1,2),(-5,-2)$&$6$&$3$&$9$&$1$&$\frac{4}{3}$\\
\evnrow \padding \hypertarget{row:4.2}{}$4.2$&\gap $(3,2),(-1,2),(-2,1),(-2,-3)$&$6$&$3$&$9$&$1$&$\frac{4}{3}$\\
\evnrow \padding \hypertarget{row:4.3}{}$4.3$&\gap $(1,2),(-1,2),(-2,1),(1,-5)$&$6$&$3$&$9$&$1$&$\frac{4}{3}$\\
\evnrow \padding \hypertarget{row:4.4}{}$4.4$&\gap $(1,2),(-1,2),(-5,-2),(1,-2)$&$6$&$3$&$9$&$1$&$\frac{4}{3}$\\
\cmidrule(lr){1-1} \cmidrule(lr){2-2} \cmidrule(lr){3-4} \cmidrule(lr){5-7}
\oddrow \padding $5.1$&\gap $(5,2),(-1,2),(-2,1),(-1,-1),(1,-2)$&$6$&$3$&$4$&$4$&$\frac{4}{3}$\\
\oddrow \padding $5.2$&\gap $(3,1),(3,2),(-1,2),(-2,1),(-2,-1),(-1,-2),(1,-1)$&$6$&$3$&$4$&$4$&$\frac{4}{3}$\\
\oddrow \padding $5.3$&\gap $(2,1),(1,2),(-1,2),(-2,1),(-2,-1),(-1,-2),(1,-2),(2,-1)$&$6$&$3$&$4$&$4$&$\frac{4}{3}$\\
\cmidrule(lr){1-1} \cmidrule(lr){2-2} \cmidrule(lr){3-4} \cmidrule(lr){5-7}
\evnrow \padding $6.1$&\gap $(1,0),(2,3),(-1,3),(-1,-3)$&$3$&$3$&$7$&$2$&$\frac{5}{3}$\\
\evnrow \padding $6.2$&\gap $(5,2),(-1,2),(-2,1),(1,-2)$&$6$&$3$&$7$&$2$&$\frac{5}{3}$\\
\evnrow \padding $6.3$&\gap $(5,2),(-1,2),(-5,-2),(-1,-1)$&$6$&$3$&$7$&$2$&$\frac{5}{3}$\\
\evnrow \padding $6.4$&\gap $(2,1),(1,2),(-1,2),(-2,1),(-2,-3)$&$6$&$3$&$7$&$2$&$\frac{5}{3}$\\
\evnrow \padding $6.5$&\gap $(1,2),(-1,2),(-2,1),(-1,-1),(1,-3)$&$6$&$3$&$7$&$2$&$\frac{5}{3}$\\
\cmidrule(lr){1-1} \cmidrule(lr){2-2} \cmidrule(lr){3-4} \cmidrule(lr){5-7}
\oddrow \padding $7.1$&\gap $(2,1),(1,2),(-1,2),(-2,1),(-2,-1),(-1,-2),(1,-1)$&$6$&$3$&$2$&$5$&$\frac{5}{3}$\\
\cmidrule(lr){1-1} \cmidrule(lr){2-2} \cmidrule(lr){3-4} \cmidrule(lr){5-7}
\evnrow \padding $8.1$&\gap $(1,2),(-1,2),(-2,1),(-1,-1),(1,-2)$&$6$&$3$&$5$&$3$&$2$\\
\evnrow \padding $8.2$&\gap $(2,1),(1,2),(-1,2),(-2,1),(-2,-1),(-1,-2)$&$6$&$3$&$5$&$3$&$2$\\
\cmidrule(lr){1-1} \cmidrule(lr){2-2} \cmidrule(lr){3-4} \cmidrule(lr){5-7}
\oddrow \padding $9.1$&\gap $(1,0),(-1,3),(-2,3),(-1,0),(1,-3),(2,-3)$&$3$&$3$&$0$&$6$&$2$\\
\cmidrule(lr){1-1} \cmidrule(lr){2-2} \cmidrule(lr){3-4} \cmidrule(lr){5-7}
\evnrow \padding $10.1$&\gap $(7,1),(0,1),(-4,-1)$&$6$&$3$&$8$&$1$&$\frac{7}{3}$\\
\evnrow \padding $10.2$&\gap $(5,2),(-1,2),(-1,0),(1,-2)$&$6$&$3$&$8$&$1$&$\frac{7}{3}$\\
\evnrow \padding $10.3$&\gap $(1,1),(0,1),(-4,-1),(2,-1)$&$6$&$3$&$8$&$1$&$\frac{7}{3}$\\
\evnrow \padding $10.4$&\gap $(2,1),(0,1),(-4,-1),(1,-1)$&$6$&$3$&$8$&$1$&$\frac{7}{3}$\\
\evnrow \padding $10.5$&\gap $(3,1),(0,1),(-4,-1),(0,-1)$&$6$&$3$&$8$&$1$&$\frac{7}{3}$\\
\evnrow \padding $10.6$&\gap $(1,2),(-1,2),(-2,1),(1,-2)$&$6$&$3$&$8$&$1$&$\frac{7}{3}$\\
\cmidrule(lr){1-1} \cmidrule(lr){2-2} \cmidrule(lr){3-4} \cmidrule(lr){5-7}
\oddrow \padding $11.1$&\gap $(1,0),(-1,3),(-2,3),(-1,0),(1,-3)$&$3$&$3$&$3$&$4$&$\frac{7}{3}$\\
\cmidrule(lr){1-1} \cmidrule(lr){2-2} \cmidrule(lr){3-4} \cmidrule(lr){5-7}
\evnrow \padding $12.1$&\gap $(6,1),(0,1),(-3,-1)$&$3$&$3$&$6$&$2$&$\frac{8}{3}$\\
\evnrow \padding $12.2$&\gap $(2,1),(0,1),(-3,-1),(1,-1)$&$3$&$3$&$6$&$2$&$\frac{8}{3}$\\
\evnrow \padding $12.3$&\gap $(3,1),(0,1),(-3,-1),(0,-1)$&$3$&$3$&$6$&$2$&$\frac{8}{3}$\\
\evnrow \padding $12.4$&\gap $(1,1),(0,1),(-3,-1),(2,-1)$&$3$&$3$&$6$&$2$&$\frac{8}{3}$\\
\cmidrule(lr){1-1} \cmidrule(lr){2-2} \cmidrule(lr){3-4} \cmidrule(lr){5-7}
\oddrow \padding $13.1$&\gap $(1,0),(-1,3),(-2,3),(1,-3)$&$3$&$3$&$6$&$2$&$\frac{8}{3}$\\
\cmidrule(lr){1-1} \cmidrule(lr){2-2} \cmidrule(lr){3-4} \cmidrule(lr){5-7}
\evnrow \padding $14.1$&\gap $(-1,3),(-2,3),(-1,0),(1,-3),(1,-1)$&$3$&$3$&$4$&$3$&$3$\\
\cmidrule(lr){1-1} \cmidrule(lr){2-2} \cmidrule(lr){3-4} \cmidrule(lr){5-7}
\oddrow \padding $15.1$&\gap $(5,1),(0,1),(-3,-1)$&$3$&$3$&$7$&$1$&$\frac{10}{3}$\\
\oddrow \padding $15.2$&\gap $(1,1),(0,1),(-3,-1),(1,-1)$&$3$&$3$&$7$&$1$&$\frac{10}{3}$\\
\oddrow \padding $15.3$&\gap $(2,1),(0,1),(-3,-1),(0,-1)$&$3$&$3$&$7$&$1$&$\frac{10}{3}$\\
\oddrow \padding $15.4$&\gap $(5,2),(-1,2),(-1,1),(0,-1),(1,-2)$&$6$&$3$&$7$&$1$&$\frac{10}{3}$\\
\cmidrule(lr){1-1} \cmidrule(lr){2-2} \cmidrule(lr){3-4} \cmidrule(lr){5-7}
\evnrow \padding $16.1$&\gap $(-1,3),(-2,3),(-1,0),(1,-2),(1,-1)$&$3$&$3$&$5$&$2$&$\frac{11}{3}$\\
\cmidrule(lr){1-1} \cmidrule(lr){2-2} \cmidrule(lr){3-4} \cmidrule(lr){5-7}
\oddrow \padding $17.1$&\gap $(0,1),(-1,3),(-2,3),(-1,0),(1,-3),(1,-2)$&$3$&$3$&$3$&$3$&$4$\\
\cmidrule(lr){1-1} \cmidrule(lr){2-2} \cmidrule(lr){3-4} \cmidrule(lr){5-7}
\evnrow \padding $18.1$&\gap $(4,1),(0,1),(-1,0),(-1,-1)$&$3$&$3$&$6$&$1$&$\frac{13}{3}$\\
\evnrow \padding $18.2$&\gap $(1,0),(1,1),(0,1),(-3,-1),(0,-1)$&$3$&$3$&$6$&$1$&$\frac{13}{3}$\\
\evnrow \padding $18.3$&\gap $(2,1),(0,1),(-1,0),(-1,-1),(1,-1)$&$3$&$3$&$6$&$1$&$\frac{13}{3}$\\
\evnrow \padding $18.4$&\gap $(5,2),(-1,2),(-5,-2),(-2,-1)$&$6$&$3$&$6$&$1$&$\frac{13}{3}$\\
\evnrow \padding $18.5$&\gap $(5,2),(-1,2),(-1,1),(1,-2)$&$6$&$3$&$6$&$1$&$\frac{13}{3}$\\
\cmidrule(lr){1-1} \cmidrule(lr){2-2} \cmidrule(lr){3-4} \cmidrule(lr){5-7}
\oddrow \padding $19.1$&\gap $(0,1),(-1,3),(-2,3),(-1,0),(1,-2)$&$3$&$3$&$4$&$2$&$\frac{14}{3}$\\
\cmidrule(lr){1-1} \cmidrule(lr){2-2} \cmidrule(lr){3-4} \cmidrule(lr){5-7}
\evnrow \padding $20.1$&\gap $(0,1),(-1,3),(-2,3),(-1,0),(1,-3)$&$3$&$3$&$2$&$3$&$5$\\
\cmidrule(lr){1-1} \cmidrule(lr){2-2} \cmidrule(lr){3-4} \cmidrule(lr){5-7}
\oddrow \padding $21.1$&\gap $(4,1),(0,1),(-3,-1)$&$3$&$3$&$5$&$1$&$\frac{16}{3}$\\
\oddrow \padding $21.2$&\gap $(2,1),(0,1),(-3,-1),(-1,-1)$&$3$&$3$&$5$&$1$&$\frac{16}{3}$\\
\oddrow \padding $21.3$&\gap $(1,1),(0,1),(-3,-1),(0,-1)$&$3$&$3$&$5$&$1$&$\frac{16}{3}$\\
\cmidrule(lr){1-1} \cmidrule(lr){2-2} \cmidrule(lr){3-4} \cmidrule(lr){5-7}
\evnrow \padding $22.1$&\gap $(3,1),(0,1),(-1,0),(0,-1)$&$3$&$3$&$5$&$1$&$\frac{16}{3}$\\
\evnrow \padding $22.2$&\gap $(1,0),(1,1),(0,1),(-3,-1),(-1,-1)$&$3$&$3$&$5$&$1$&$\frac{16}{3}$\\
\cmidrule(lr){1-1} \cmidrule(lr){2-2} \cmidrule(lr){3-4} \cmidrule(lr){5-7}
\oddrow \padding $23.1$&\gap $(-1,3),(-2,3),(-1,0),(1,-2)$&$3$&$3$&$3$&$2$&$\frac{17}{3}$\\
\cmidrule(lr){1-1} \cmidrule(lr){2-2} \cmidrule(lr){3-4} \cmidrule(lr){5-7}
\evnrow \padding $24.1$&\gap $(2,1),(0,1),(-1,0),(1,-1)$&$3$&$3$&$4$&$1$&$\frac{19}{3}$\\
\evnrow \padding $24.2$&\gap $(1,0),(1,1),(0,1),(-3,-1),(-2,-1)$&$3$&$3$&$4$&$1$&$\frac{19}{3}$\\
\cmidrule(lr){1-1} \cmidrule(lr){2-2} \cmidrule(lr){3-4} \cmidrule(lr){5-7}
\oddrow \padding $25.1$&\gap $(1,0),(1,1),(0,1),(-3,-1)$&$3$&$3$&$3$&$1$&$\frac{22}{3}$\\
\cmidrule(lr){1-1} \cmidrule(lr){2-2} \cmidrule(lr){3-4} \cmidrule(lr){5-7}
\evnrow \padding $26.1$&\gap $(-1,3),(-2,3),(1,-2)$&$3$&$3$&$2$&$1$&$\frac{25}{3}$\\
\end{longtable}
\end{center}

We now use the minimal polygons from Theorem~\ref{thm:minimal_third_one_one} to generate a complete list of mutation classes of Fano polygons with singularity content $\left(n,\{m\times\third\}\right)$, $m\geq 1$. Theorem~\ref{thm:classes_third_one_one}.  For future reference we recall some of the polytopes from Table~\ref{tab:minimal_third_one_one}:

\begin{lemma}\label{lem:third_one_one_6_2}
There are exactly two mutation-equivalence classes of Fano polygons with singularity content $\left(6,\{2\times\third\}\right)$. The mutation-equivalence classes are generated by
$$
P_{12}=\sconv{(6,1),(0,1),(-3,-1)}
\qquad\text{ and }\qquad 
P_{13}=\sconv{(1,0),(-1,3),(-2,3),(1,-3)}.
$$
\end{lemma}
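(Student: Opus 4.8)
The plan is to combine the classification of minimal polygons in Theorem~\ref{thm:minimal_third_one_one} with the mutation invariant $[M:\Tlattice{P}]$ from Lemma~\ref{lem:sublattice_index_preserved}. Since singularity content is preserved under mutation, every Fano polygon with singularity content $\left(6,\{2\times\third\}\right)$ is mutation-equivalent to a minimal polygon with the same singularity content. Reading off Table~\ref{tab:minimal_third_one_one}, the minimal polygons with $n=6$ and $m=2$ are precisely those numbered $12.1$ through $12.4$ together with $13.1$; here $P_{12}=12.1$ and $P_{13}=13.1$. It therefore suffices to show that $12.1,\ldots,12.4$ all lie in one mutation class, that $13.1$ lies in another, and that these two classes are distinct.

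First I would check that $12.1,\ldots,12.4$ are mutually mutation-equivalent by exhibiting explicit mutation sequences between them; each is a short computation of the form described in \S\ref{subsec:mutation_N}, mutating with respect to the primitive inner normal of a suitable $T$-edge (Lemma~\ref{lem:mutation_iff_length} determines which edges admit a mutation). This places all four polygons in the mutation class of $P_{12}$.

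The key step is to separate $P_{12}$ from $P_{13}$, for which I would use the sublattice index $[M:\Tlattice{P}]$. Computing the primitive inner normals to the edges of $P_{12}=\sconv{(6,1),(0,1),(-3,-1)}$ gives $(0,-1)$, $(2,-3)$, and $(-2,9)$; the pairwise determinants of these vectors are $2$, $-2$, and $12$, so (using that the index of a full-rank sublattice of $\Z^2$ is the greatest common divisor of its $2\times 2$ minors) we get $[M:\Tlattice{P_{12}}]=\gcd{2,2,12}=2$. By contrast, the inner normals to the edges of $P_{13}=\sconv{(1,0),(-1,3),(-2,3),(1,-3)}$ include the pair $(0,-1)$ and $(-1,0)$, which already generate all of $M$, so $[M:\Tlattice{P_{13}}]=1$. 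Since Lemma~\ref{lem:sublattice_index_preserved} shows this index is a mutation invariant, $P_{12}$ and $P_{13}$ are not mutation-equivalent. Putting these together, the five minimal polygons fall into exactly the two mutation classes generated by $P_{12}$ and $P_{13}$, which is the claim.

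The only real obstacle is the verification that $12.1,\ldots,12.4$ are genuinely connected by mutations: this is a finite but slightly fiddly search through the admissible mutations at each step, whereas the separation of the two classes is immediate from the sublattice-index invariant. As an alternative obstruction one could instead invoke Proposition~\ref{prop:quiver_mutations}: all six primitive $T$-cones of $P_{12}$ share the inner normal $(0,-1)$, so $Q_{P_{12}}$ is the empty quiver on six vertices (whose quiver-mutation class is just itself), whereas for $P_{13}$ the six $T$-cones split into two groups of three with distinct normals $(2,1)$ and $(-1,0)$, giving a non-empty quiver; hence $Q_{P_{12}}$ and $Q_{P_{13}}$ are not mutation-equivalent. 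Either invariant completes the separation, but the sublattice index gives the cleanest argument.
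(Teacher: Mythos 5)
Your proposal is correct and follows essentially the same route as the paper: reduce to the minimal polygons of Table~\ref{tab:minimal_third_one_one}, connect $12.1,\ldots,12.4$ by explicit mutations, and separate $P_{12}$ from $P_{13}$ via the sublattice invariant of Lemma~\ref{lem:sublattice_index_preserved}. The only (harmless) divergence is that the paper computes $\Tlattice{P_{12}}$ using the normals of the edges carrying primitive $T$-cones only, obtaining a rank-one sublattice, whereas you use the normals of all edges as in Definition~\ref{defn:normal_vector_sublattice} and obtain index $2$; either computation distinguishes $\Tlattice{P_{12}}$ from $\Tlattice{P_{13}}=M$ and so yields the required separation.
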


\begin{proof}
Notice that the six primitive $T$-cones in $P_{12}$ are all contributed by the (cone over the) edge $E$ with inner normal vector $(0,-1)\in M$. Hence $\Tlattice{P_{12}}$ is a one-dimensional sublattice of $M$. The polygon $P_{13}$ has primitive $T$-cones contributed by those edges with inner normal vectors $(-1,0)$ and $(2,1)\in M$, hence $\Tlattice{P_{13}}$ equals $M$. By Lemma~\ref{lem:sublattice_index_preserved} we conclude that $P_{12}$ cannot be mutation-equivalent to $P_{13}$.
\end{proof}

\begin{lemma}\label{lem:third_one_one_5_1}
There are exactly two mutation-equivalence classes of Fano polygons with singularity content $\left(5,\{1\times\third\}\right)$. The mutation-equivalence classes are generated by
$$
P_{21}=\sconv{(4,1),(0,1),(-3,-1)}
\qquad\text{ and }\qquad 
P_{22}=\sconv{(3,1),(0,1),(-1,0),(0,-1)}.
$$
\end{lemma}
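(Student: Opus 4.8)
The plan is to combine the enumeration already supplied by Theorem~\ref{thm:minimal_third_one_one} with two complementary ingredients: explicit mutations to collapse the list of candidates, and the sublattice invariant of Lemma~\ref{lem:sublattice_index_preserved} to show that what remains really are two distinct classes. This mirrors the structure of the proof of Lemma~\ref{lem:third_one_one_6_2}.

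First I would invoke Theorem~\ref{thm:minimal_third_one_one}: every Fano polygon with $\SC{P}=\left(5,\{1\times\third\}\right)$ is mutation-equivalent to a \emph{minimal} one, and the minimal polygons with this singularity content are precisely the five entries $21.1,21.2,21.3,22.1,22.2$ of Table~\ref{tab:minimal_third_one_one}, where $P_{21}=21.1$ and $P_{22}=22.1$. Hence it suffices to determine the mutation-equivalence classes among these five polygons. Second, I would exhibit explicit mutations, computed directly from the recipe of \S\ref{subsec:mutation_N}, showing that $21.2$ and $21.3$ are each mutation-equivalent to $P_{21}$ and that $22.2$ is mutation-equivalent to $P_{22}$. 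Concretely, mutating $P_{21}=\sconv{(4,1),(0,1),(-3,-1)}$ with respect to the inner normal $(0,-1)$ of its long (width-$4$) edge, with factor $F=\sconv{\orig,(1,0)}$, produces a quadrilateral which, after $\GL_2(\Z)$-reduction, is one of the other members of the $21$-family; iterating a short finite sequence of such mutations and reducing each result modulo $\GL_2(\Z)$ connects $\{21.1,21.2,21.3\}$ into a single component, and likewise connects $\{22.1,22.2\}$. Since each $\mut_w(P,F)$ is again a polygon with the same singularity content (preserved under mutation), this step is a purely finite check.

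Third, and this is the only step requiring an idea rather than computation, I would separate the two families using the sublattice $\Tlattice{P}\subseteq M$ of Definition~\ref{defn:normal_vector_sublattice}. For $P_{22}=\sconv{(3,1),(0,1),(-1,0),(0,-1)}$ two of the edge normals are $(0,-1)$ and $(1,-1)$, and $\det\left(\begin{smallmatrix}0&-1\\1&-1\end{smallmatrix}\right)=1$, so $\Tlattice{P_{22}}=M$ has index $1$. For the triangle $P_{21}$ the three primitive inner normals are $(0,-1)$, $(2,-3)$ (the residual edge), and $(-2,7)$; these generate $\langle(0,1),(2,0)\rangle$, an index-$2$ sublattice of $M$. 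As $[M:\Tlattice{P_{21}}]=2\neq 1=[M:\Tlattice{P_{22}}]$, Lemma~\ref{lem:sublattice_index_preserved} shows that $P_{21}$ and $P_{22}$ lie in distinct mutation-equivalence classes. Combined with the second step, this yields exactly two classes, generated by $P_{21}$ and $P_{22}$.

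The main obstacle is really the bookkeeping in the second step: one must be sure that the explicit mutations genuinely exhaust all connections among the five minimal polygons, rather than missing a mutation that would merge a member of the $21$-family with a member of the $22$-family. The sublattice invariant neutralises this danger entirely, since it guarantees \emph{a priori} that the two families cannot be mutation-equivalent; the computation therefore only has to verify connectivity \emph{within} each family, which is finite and routine.
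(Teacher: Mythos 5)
Your proof is correct and follows essentially the same route as the paper: the paper's own proof of this lemma consists precisely of the comparison $[M:\Tlattice{P_{21}}]=2$ versus $[M:\Tlattice{P_{22}}]=1$ via Lemma~\ref{lem:sublattice_index_preserved}, with the reduction to the five minimal polygons and the within-family connecting mutations left implicit (they are handled in the proof of Theorem~\ref{thm:classes_third_one_one}, where one representative case is worked out and ``the remaining cases are similar''). The only cosmetic difference is that the paper computes the sublattice from the normals of the edges carrying primitive $T$-cones, whereas you use all edge normals as in Definition~\ref{defn:normal_vector_sublattice}; both computations give the indices $2$ and $1$ here, so the conclusion is unaffected.
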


\begin{proof}
The primitive inner normal vectors to the edges of $P_{21}$ which contribute primitive $T$-cones are $(0,-1)$ and $(-2,7)\in M$. These generate an index-two sublattice $\Tlattice{P_{21}}$ of $M$. In the case of $P_{22}$ the relevant inner normal vectors are $(1,1)$, $(1,-1)$, and $(0,-1)\in M$, and we have that $[M:\Tlattice{P_{22}}]=1$. By Lemma~\ref{lem:sublattice_index_preserved} we conclude that $P_{21}$ and $P_{22}$ lie in distinct mutation-equivalence classes.
\end{proof}

\begin{thm}\label{thm:classes_third_one_one}
There are $26$ mutation-equivalence classes of Fano polygons with singularity content $\big(n,\{m\times\third\}\big)$, $m\geq 1$. Representative polygons for each mutation-equivalence class are given in Table~\ref{tab:third_one_one_polygons}.
\end{thm}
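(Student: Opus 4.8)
The plan is to leverage Theorem~\ref{thm:minimal_third_one_one}, which lists the $64$ minimal Fano polygons with singularity content $\left(n,\{m\times\third\}\right)$, and to determine which of these are mutation-equivalent. Recall from the discussion following Definition~\ref{defn:minimaility} that every Fano polygon is mutation-equivalent to a minimal one; hence every mutation-equivalence class of polygons with this singularity content contains at least one of the $64$ polygons of Table~\ref{tab:minimal_third_one_one}, and it suffices to compute the mutation-equivalence classes amongst these. Since singularity content is a mutation invariant~\cite[Proposition~3.6]{AK14}, two minimal polygons with distinct pairs $(n,m)$ can never be mutation-equivalent. This partitions the $64$ polygons into $24$ blocks indexed by $(n,m)$, and it remains to resolve the mutation classes within each block.

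First I would show that, for every value of $(n,m)$ other than $(6,2)$ and $(5,1)$ -- that is, for $22$ of the $24$ blocks -- all the minimal polygons in the corresponding block lie in a single mutation class. As in the proof of Theorem~\ref{thm:T-sing_mutation_classes}, this is achieved by exhibiting, for each block, an explicit chain of mutations connecting the listed polygons to a chosen representative; the intermediate polygons need not be minimal, but after each mutation one re-minimises using the greedy process of \S\ref{sec:minimality} and records the resulting minimal polygon. Because Theorem~\ref{thm:minimal_third_one_one} guarantees that only finitely many minimal polygons occur, this is a finite (if laborious) verification: performing a breadth-first search on the mutation graph, reduced to minimal representatives, eventually exhausts each block. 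These $22$ blocks therefore contribute exactly $22$ mutation-equivalence classes.

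The two remaining blocks, $(n,m)=(6,2)$ and $(n,m)=(5,1)$, are handled by Lemmas~\ref{lem:third_one_one_6_2} and~\ref{lem:third_one_one_5_1}, which show that each splits into exactly two mutation-equivalence classes. The lower bound -- that the two representatives in each block are genuinely inequivalent -- follows from the mutation invariance of the sublattice $\Tlattice{P}$ (Lemma~\ref{lem:sublattice_index_preserved}): for $(6,2)$ the representatives $P_{12}$ and $P_{13}$ are distinguished by the rank of $\Tlattice{P}$, and for $(5,1)$ the representatives $P_{21}$ and $P_{22}$ are distinguished by the index $[M:\Tlattice{P}]$. The matching upper bound requires verifying that every other minimal polygon sharing that $(n,m)$ connects by explicit mutations to one of the two representatives, i.e.\ that the polygons of these two blocks distribute correctly between $P_{12},P_{13}$ and $P_{21},P_{22}$. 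These two blocks thus contribute $2+2=4$ classes, and in total we obtain $22+4=26$, as claimed; the representatives are those recorded in Table~\ref{tab:third_one_one_polygons}.

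The main obstacle is the explicit mutation bookkeeping underlying the two previous paragraphs: establishing the \emph{upper} bounds (mutation-equivalence) is not automatic from the invariants and demands that one actually produce, for each block, mutation sequences merging all its members. The subtle point is ensuring the split in the two exceptional blocks is exactly two and no finer -- the invariant $\Tlattice{P}$ separates the two representatives, but one must still certify by explicit mutation that no third class hides among the remaining polygons of groups~$12$, $13$, $21$ and~$22$. Since the set of minimal polygons with this singularity content is finite, this certification terminates, and the computation -- best carried out with computer assistance, sifting the classifications of~\cite{KKN08,GRDb} -- closes the argument.
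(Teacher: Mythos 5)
Your proposal is correct and follows essentially the same route as the paper: partition the $64$ minimal polygons of Theorem~\ref{thm:minimal_third_one_one} by the mutation invariant $(n,m)$, note that only the blocks $(6,2)$ and $(5,1)$ split further (which is exactly what Lemmas~\ref{lem:third_one_one_6_2} and~\ref{lem:third_one_one_5_1} establish via $\Tlattice{P}$), and certify the upper bounds by exhibiting explicit mutation chains within each block -- the paper carries out the $n=9$, $m=1$ case and declares the rest similar. Your extra remarks on re-minimising and finite search are just a more algorithmic phrasing of the same finite verification.
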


\begin{proof}
The values $n$ and $m$ distinguishes every mutation class of Fano polygons with $\third$ singularities except in the cases $n=6$, $m=2$ and $n=5$, $m=1$. These two exceptional cases are handled in Lemmas~\ref{lem:third_one_one_6_2} and~\ref{lem:third_one_one_5_1} above. We shall show that the minimal polygons in Table~\ref{tab:minimal_third_one_one} with $n=9$, $m=1$ are connected by mutation; the remaining cases are similar. There are four minimal polytopes to consider, with numbers \hyperlink{row:4.1}{$(4.1)$}, \hyperlink{row:4.2}{$(4.2)$}, \hyperlink{row:4.3}{$(4.3)$}, and \hyperlink{row:4.4}{$(4.4)$} in Table~\ref{tab:minimal_third_one_one}. Denote these polytopes by $P_{4.1}$, $P_{4.2}$, $P_{4.3}$, and $P_{4.4}$ respectively. Then, up to $\GL_2(\Z)$-equivalence, we have the sequence of mutations:
\begin{center}
\begin{tabular}{@{}c@{\,}c@{\,}c@{\,}c@{\,}c@{\,}c@{}c@{}}
$P_{4.1}$&&&&$P_{4.2}$&\\
\raisebox{15pt}{\includegraphics[scale=0.4]{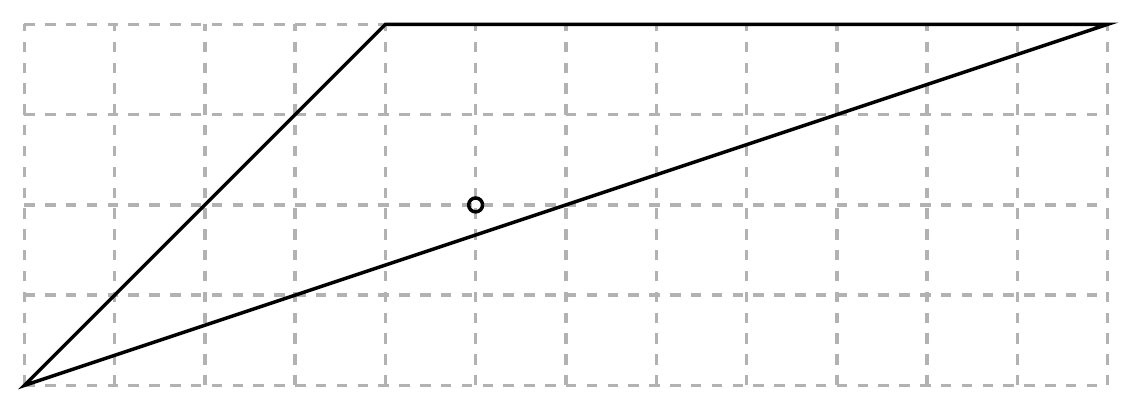}}&
\raisebox{36pt}{$\xmapsto{(0,-1)}$}&
\begin{tabular}{@{}l@{}}
  \includegraphics[scale=0.4]{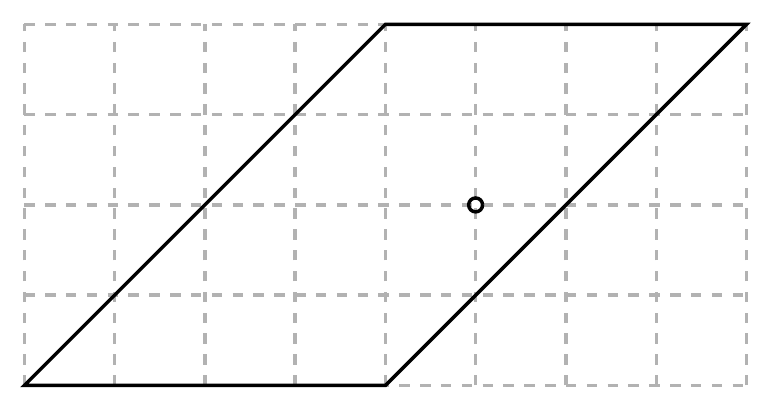}\\
  \hspace{52.5pt}\raisebox{10pt}{\rotatebox{-90}{$\longmapsto$}}$\,\scriptstyle(0,-1)$\\
  \vspace{1pt}\includegraphics[scale=0.4]{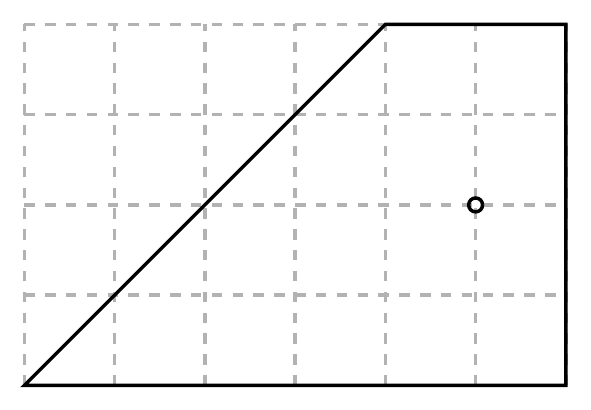}\raisebox{21pt}{$P_{4.4}$}
\end{tabular}&
\raisebox{36pt}{$\xmapsto{(1,-1)}$}&
\raisebox{4.5pt}{\includegraphics[scale=0.4]{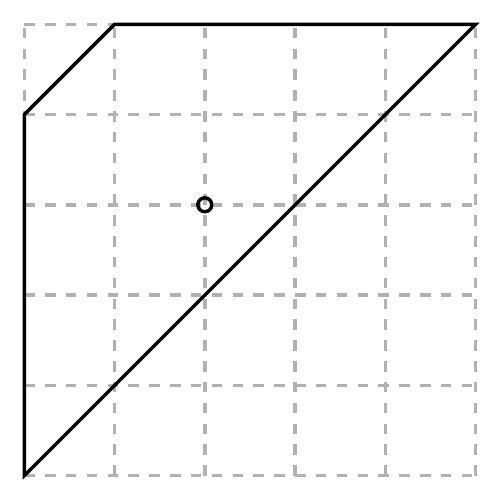}}&
\raisebox{36pt}{$\xmapsto{(1,0)}$}&
\hspace{-18pt}\raisebox{10pt}{
  \begin{tabular}{@{}l@{}}
    \phantom{$P_{4.3}$}\includegraphics[scale=0.4]{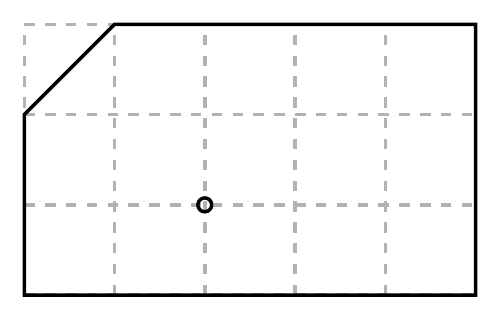}\\
    \hspace{21.5pt}\phantom{$P_{4.3}$}\raisebox{10pt}{\rotatebox{-90}{$\longmapsto$}}$\,\scriptstyle(0,-1)$\\
    \vspace{1pt}\raisebox{13pt}{$P_{4.3}$}\includegraphics[scale=0.4]{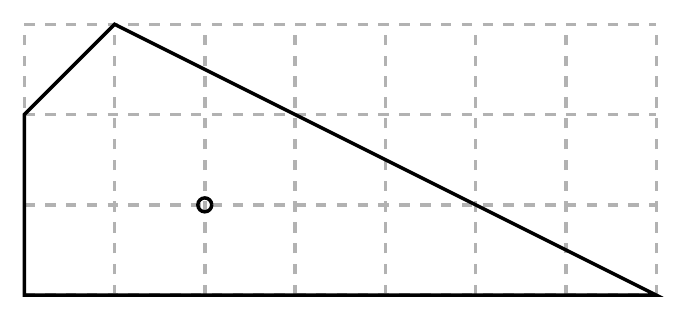}
  \end{tabular}
}
\end{tabular}
\end{center}
The mutations have been labelled with their corresponding primitive inner normal vector $w$.
\end{proof}
\begin{table}[htdp]
\centering
\caption{Representatives for the $26$ mutation-equivalence classes of Fano polygons $P\subset\NQ$ with singularity content $\left(n,\{m\times\third\}\right)$, $m\geq 1$. The degrees $(-K_X)^2=12-n-\frac{5m}{3}$ of the corresponding toric varieties are also given. See also Figure~\ref{fig:third_one_one_polygons}.}
\label{tab:third_one_one_polygons}
\setlength{\extrarowheight}{0.1em}
\begin{tabular}{rlccc}
\toprule
\multicolumn{1}{c}{\#}&\multicolumn{1}{c}{$\V{P}$}&$n$&$m$&$(-K_X)^2$\\
\cmidrule(lr){1-1} \cmidrule(lr){2-2} \cmidrule(lr){3-5}
\oddrow \padding \hyperlink{poly:1}{$1$}&\gap $(7,5),(-3,5),(-3,-5)$&$10$&$1$&$\frac{1}{3}$\\ 
\evnrow \padding \hyperlink{poly:2}{$2$}&\gap $(3,2),(-3,2),(-3,-2),(3,-2)$&$8$&$2$&$\frac{2}{3}$\\ 
\oddrow \padding \hyperlink{poly:3}{$3$}&\gap $(3,1),(3,2),(-1,2),(-2,1),(-2,-3),(-1,-3)$&$6$&$3$&$1$\\ 
\evnrow \padding \hyperlink{poly:4}{$4$}&\gap $(3,2),(-1,2),(-2,1),(-2,-3)$&$9$&$1$&$\frac{4}{3}$\\ 
\oddrow \padding \hyperlink{poly:5}{$5$}&\gap $(2,1),(1,2),(-1,2),(-2,1),(-2,-1),(-1,-2),(1,-2),(2,-1)$&$4$&$4$&$\frac{4}{3}$\\ 
\evnrow \padding \hyperlink{poly:6}{$6$}&\gap $(3,2),(-1,2),(-2,1),(-2,-1),(-1,-2)$&$7$&$2$&$\frac{5}{3}$\\ 
\oddrow \padding \hyperlink{poly:7}{$7$}&\gap $(2,1),(1,2),(-1,2),(-2,1),(-2,-1),(-1,-2),(1,-1)$&$2$&$5$&$\frac{5}{3}$\\ 
\evnrow \padding \hyperlink{poly:8}{$8$}&\gap $(2,1),(1,2),(-1,2),(-2,1),(-2,-1),(-1,-2)$&$5$&$3$&$2$\\ 
\oddrow \padding \hyperlink{poly:9}{$9$}&\gap $(1,1),(-1,2),(-2,1),(-1,-1),(1,-2),(2,-1)$&$0$&$6$&$2$\\ 
\evnrow \padding \hyperlink{poly:10}{$10$}&\gap $(1,1),(-1,2),(-1,-2),(1,-2)$&$8$&$1$&$\frac{7}{3}$\\ 
\oddrow \padding \hyperlink{poly:11}{$11$}&\gap $(1,1),(-1,2),(-2,1),(-1,-1),(2,-1)$&$3$&$4$&$\frac{7}{3}$\\ 
\evnrow \padding \hyperlink{poly:12}{$12$}&\gap $(3,1),(-3,1),(0,-1)$&$6$&$2$&$\frac{8}{3}$\\ 
\oddrow \padding \hyperlink{poly:13}{$13$}&\gap $(1, 1),(-1, 2),(-1, -1),(2, -1)$&$6$&$2$&$\frac{8}{3}$\\ 
\evnrow \padding \hyperlink{poly:14}{$14$}&\gap $(1,1),(-1,2),(-2,1),(-1,-1),(1,-1)$&$4$&$3$&$3$\\ 
\oddrow \padding \hyperlink{poly:15}{$15$}&\gap $(1,1),(-1,2),(-1,-1),(1,-1)$&$7$&$1$&$\frac{10}{3}$\\ 
\evnrow \padding \hyperlink{poly:16}{$16$}&\gap $(1, 1),(-1, 2),(-1, 0),(0, -1),(2, -1)$&$5$&$2$&$\frac{11}{3}$\\ 
\oddrow \padding \hyperlink{poly:17}{$17$}&\gap $(1,0),(1,1),(-1,2),(-2,1),(-1,-1),(0,-1)$&$3$&$3$&$4$\\ 
\evnrow \padding \hyperlink{poly:18}{$18$}&\gap $(1,0),(0,1),(-1,1),(-1,-3)$&$6$&$1$&$\frac{13}{3}$\\ 
\oddrow \padding \hyperlink{poly:19}{$19$}&\gap $(1, 1),(-1, 2),(-1, 1),(0, -1),(2, -1)$&$4$&$2$&$\frac{14}{3}$\\ 
\evnrow \padding \hyperlink{poly:20}{$20$}&\gap $(1,1),(-1,2),(-2,1),(-1,-1),(0,-1)$&$2$&$3$&$5$\\ 
\oddrow \padding \hyperlink{poly:21}{$21$}&\gap $(1,1),(-1,2),(-1,-2)$&$5$&$1$&$\frac{16}{3}$\\ 
\evnrow \padding \hyperlink{poly:22}{$22$}&\gap $(1, 1),(-1, 2),(-1, -1),(0, -1)$&$5$&$1$&$\frac{16}{3}$\\ 
\oddrow \padding \hyperlink{poly:23}{$23$}&\gap $(1, 1),(-1, 2),(0, -1),(2, -1)$&$3$&$2$&$\frac{17}{3}$\\ 
\evnrow \padding \hyperlink{poly:24}{$24$}&\gap $(0,1),(-1,2),(-2,1),(-1,0),(1,-1)$&$4$&$1$&$\frac{19}{3}$\\ 
\oddrow \padding \hyperlink{poly:25}{$25$}&\gap $(0,1),(-1,2),(-2,1),(1,-1)$&$3$&$1$&$\frac{22}{3}$\\ 
\evnrow \padding \hyperlink{poly:26}{$26$}&\gap $(-1,2),(-2,1),(1,-1)$&$2$&$1$&$\frac{25}{3}$\\ 
\bottomrule
\end{tabular}
\end{table}
\begin{figure}[htdp]
\caption{Representatives for the $26$ mutation-equivalence classes of Fano polygons $P\subset\NQ$ with singularity content $\left(n,\{m\times\third\}\right)$, $m\geq 1$. See also Table~\ref{tab:third_one_one_polygons}.}
\label{fig:third_one_one_polygons}
\centering
\vspace{0.5em}
\renewcommand{\arraystretch}{0.8}
\begin{tabular}{cc}
\hypertarget{poly:1}{}$1$&
\hypertarget{poly:2}{}$2$\\
\includegraphics[scale=0.5]{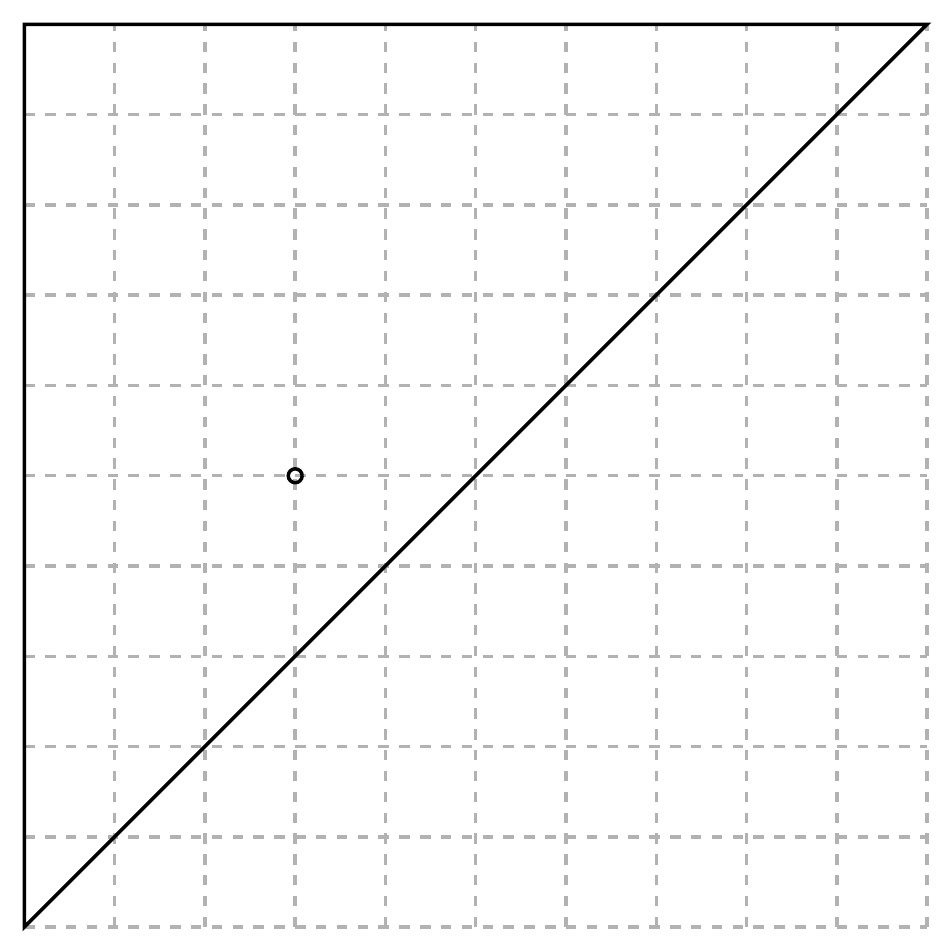}&
\raisebox{78pt}{\includegraphics[scale=0.5]{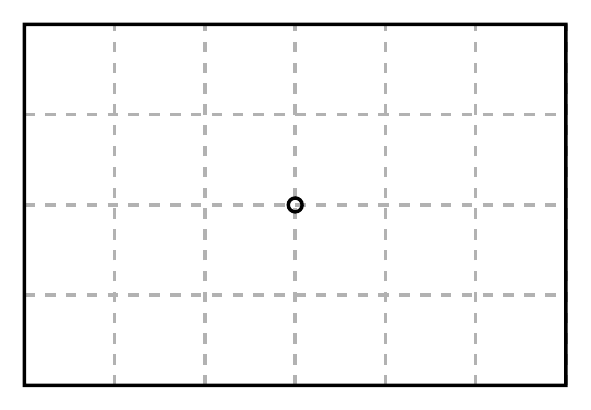}}\\
\end{tabular}
\vgap

\begin{tabular}{ccccc}
\hypertarget{poly:3}{}$3$&
\hypertarget{poly:4}{}$4$&
\hypertarget{poly:5}{}$5$&
\hypertarget{poly:6}{}$6$&
\hypertarget{poly:7}{}$7$\\
\includegraphics[scale=0.5]{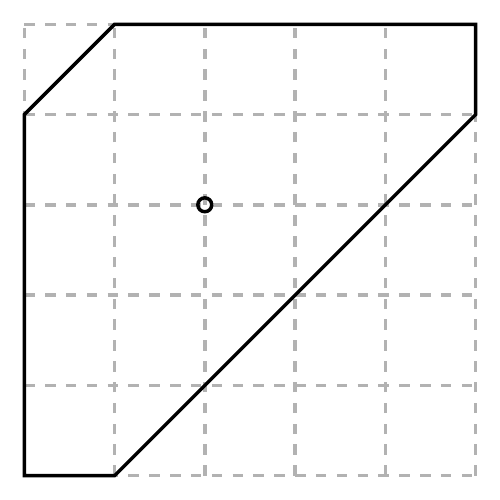}&
\includegraphics[scale=0.5]{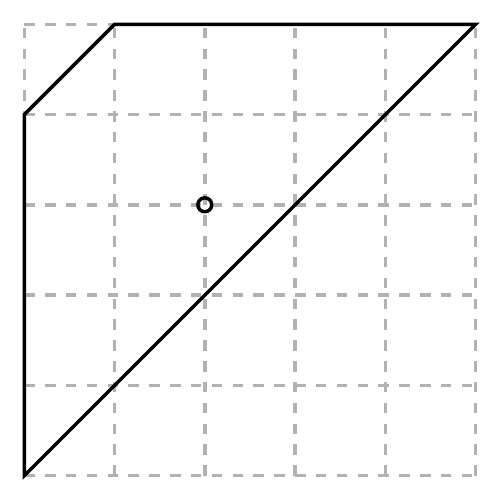}&
\raisebox{13pt}{\includegraphics[scale=0.5]{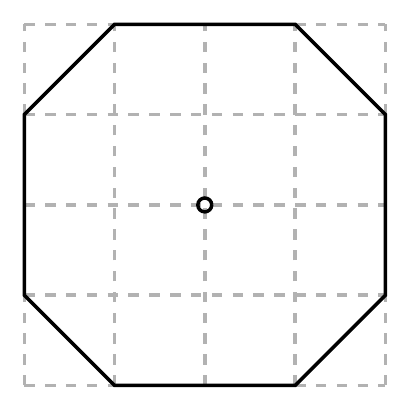}}&
\raisebox{13pt}{\includegraphics[scale=0.5]{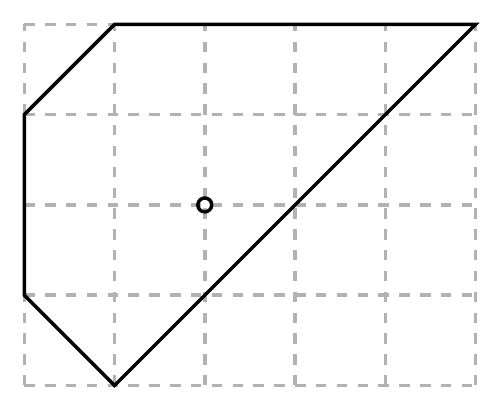}}&
\raisebox{13pt}{\includegraphics[scale=0.5]{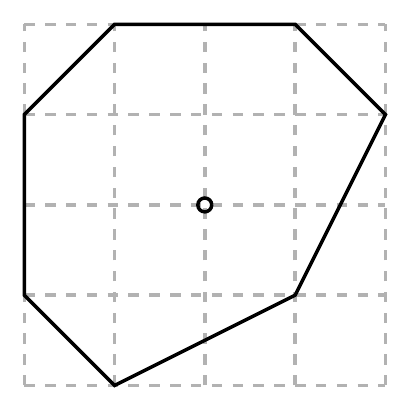}}\\
\end{tabular}
\vgap

\begin{tabular}{cccccc}
\hypertarget{poly:8}{}$8$&
\hypertarget{poly:9}{}$9$&
\hypertarget{poly:10}{}$10$&
\hypertarget{poly:11}{}$11$&
\hypertarget{poly:12}{}$12$&
\hypertarget{poly:13}{}$13$\\
\includegraphics[scale=0.5]{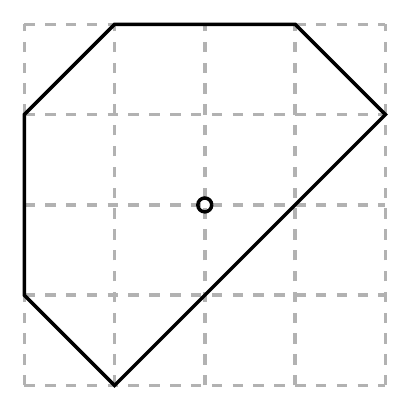}&
\includegraphics[scale=0.5]{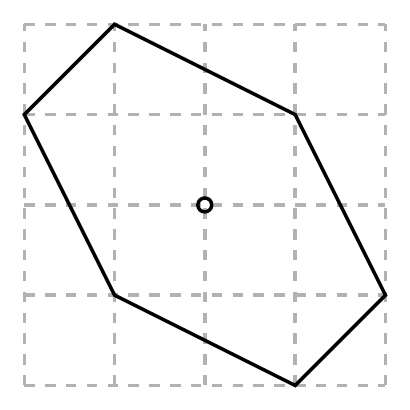}&
\includegraphics[scale=0.5]{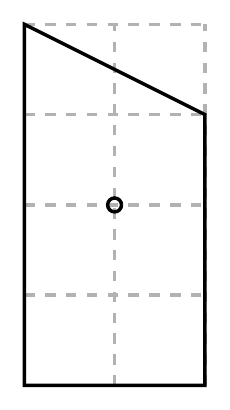}&
\raisebox{13pt}{\includegraphics[scale=0.5]{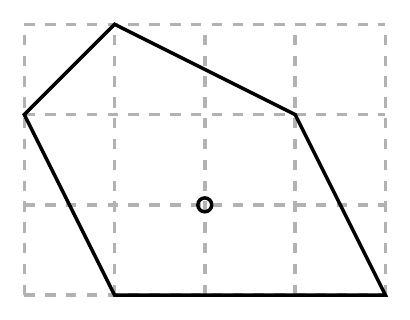}}&
\raisebox{26pt}{\includegraphics[scale=0.5]{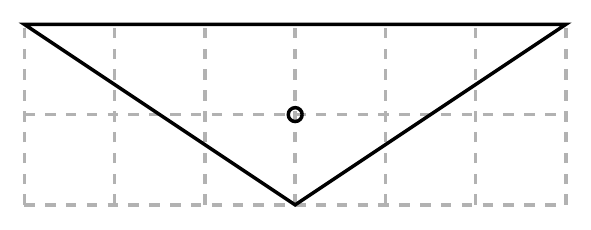}}&
\raisebox{13pt}{\includegraphics[scale=0.5]{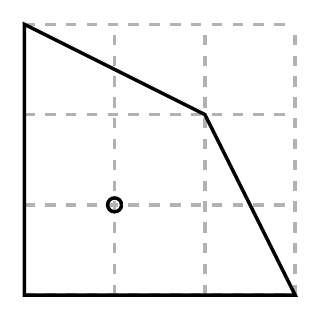}}\\
\end{tabular}
\vgap

\begin{tabular}{ccccccc}
\hypertarget{poly:14}{}$14$&
\hypertarget{poly:15}{}$15$&
\hypertarget{poly:16}{}$16$&
\hypertarget{poly:17}{}$17$&
\hypertarget{poly:18}{}$18$&
\hypertarget{poly:19}{}$19$&
\hypertarget{poly:20}{}$20$\\
\raisebox{13pt}{\includegraphics[scale=0.5]{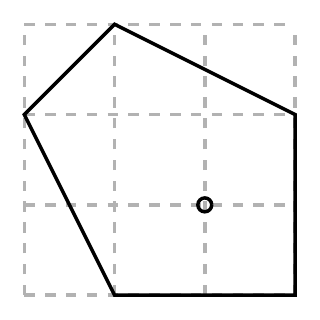}}&
\raisebox{13pt}{\includegraphics[scale=0.5]{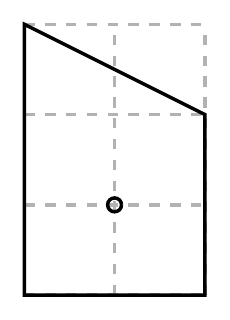}}&
\raisebox{13pt}{\includegraphics[scale=0.5]{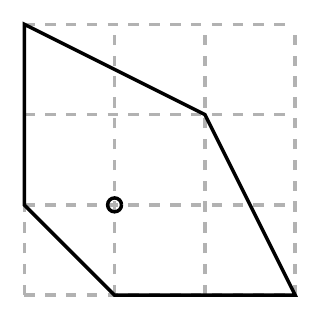}}&
\raisebox{13pt}{\includegraphics[scale=0.5]{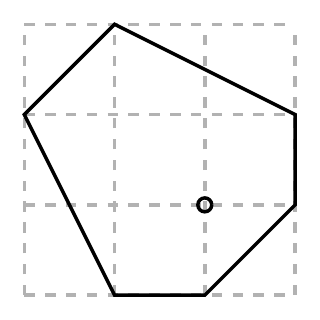}}&
\includegraphics[scale=0.5]{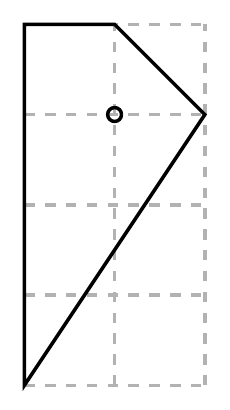}&
\raisebox{13pt}{\includegraphics[scale=0.5]{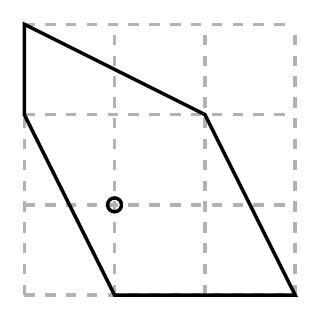}}&
\raisebox{13pt}{\includegraphics[scale=0.5]{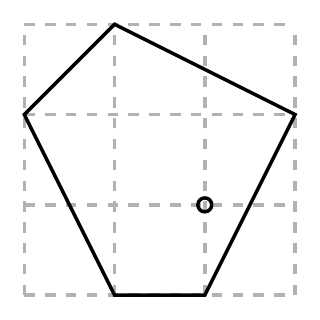}}\\
\end{tabular}
\vgap

\begin{tabular}{ccccccc}
\hypertarget{poly:21}{}$21$&
\hypertarget{poly:22}{}$22$&
\hypertarget{poly:23}{}$23$&
\hypertarget{poly:24}{}$24$&
\hypertarget{poly:25}{}$25$&
\hypertarget{poly:26}{}$26$\\
\includegraphics[scale=0.5]{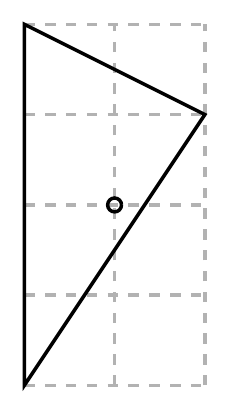}&
\raisebox{13pt}{\includegraphics[scale=0.5]{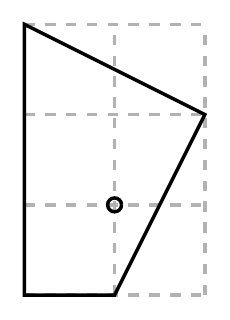}}&
\raisebox{13pt}{\includegraphics[scale=0.5]{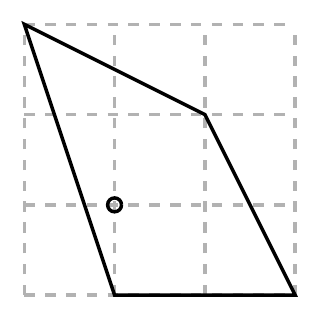}}&
\raisebox{13pt}{\includegraphics[scale=0.5]{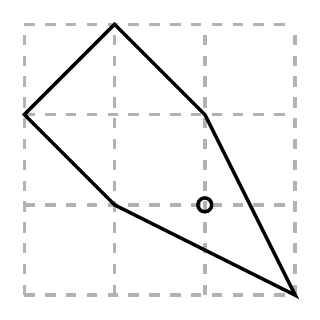}}&
\raisebox{13pt}{\includegraphics[scale=0.5]{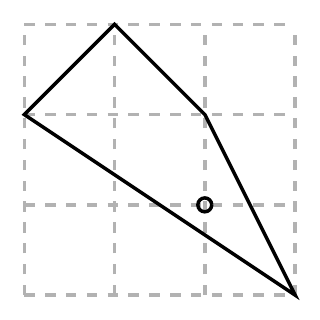}}&
\raisebox{13pt}{\includegraphics[scale=0.5]{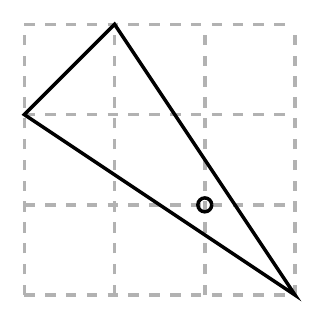}}\\
\end{tabular}
\end{figure}

\subsection*{Acknowledgements}\label{sec:acknowledgements}
This work was started at the PRAGMATIC 2013 Research School in Algebraic Geometry and Commutative Algebra, ``Topics in Higher Dimensional Algebraic Geometry'', held in Catania, Italy, during September $2013$. We are very grateful to Alfio Ragusa, Francesco Russo, and Giuseppe Zappal\'a, the organisers of the PRAGMATIC school, for creating such a wonderful atmosphere in which to work. We thank Tom Coates and Alessio Corti for many useful conversations. The majority of this paper was written during a visit by AK to Stockholm University, supported by BN's Start-up Grant. AK is supported by EPSRC grant~EP/I008128/1 and ERC Starting Investigator Grant number~240123. BN is partially supported by the Vetenskapsr{\aa}det grant~NT:2014-3991. TP is supported by an EPSRC Prize Studentship.
\bibliographystyle{plain}
\def\cprime{$'$}

\end{document}